\newcommand{\new}{}
\newcommand{\rzut}{\operatorname{proj}}
\newcommand{\R}{\mathbb{R}} 
\newcommand{\RR}{\mathcal{R}} 
\newcommand{\Sec}{\Gamma} 
\newcommand{\PP}{\mathbb{P}} 
\newcommand{\proj}[1]{\mathbb{P}(#1)} 
\newcommand{\M}{\mathcal{M}} 
\newcommand{\C}{\mathcal{C}} 
\newcommand{\D}{\mathcal{D}} 
\newcommand{\B}{\mathcal{B}} 
\newcommand{\N}{\mathrm{N}} 
\newcommand{\X}{\mathfrak{X}} 
\newcommand{\rank}{\operatorname{rank}} 
\newcommand{\HH}{\mathcal{H}} 
\newcommand{\ul}{\underline}
\newcommand{\K}{\mathcal{K}} 
\newcommand{\sym}[1]{C_{#1}} 
\newcommand{\cont}[1]{\vec{C}_{#1}} 
\newcommand{\U}{\mathcal{U}} 
\newcommand{\A}[1]{A_{#1}}
\newcommand{\flow}[1]{F_{#1}}
\newcommand{\Flow}[1]{\bm{F}_{#1}}
\newcommand{\id}{\operatorname{id}}
\newcommand{\wt}[1]{\bm{#1}}
\newcommand{\wtilde}[1]{\widetilde{#1}}
\newcommand{\wh}[1]{\widehat{#1}}
\newcommand{\T}{\mathrm{T}}
\newcommand{\lra}{\longrightarrow}
\newcommand{\ra}{\rightarrow}
\newcommand{\dd}{\operatorname{d}}
\newcommand{\vect}{\operatorname{vect}_{\R}}
\newcommand{\pa}{\partial}
\newcommand{\ad}{\operatorname{ad}}
\newcommand{\sexy}{charming} 
\newcommand{\Sexy}{Charming} 
\newcommand{\nice}{Caratheodory} 
\def\<#1>{\big\langle #1\big\rangle}
\def\(#1){\left( #1\right)}
\numberwithin{equation}{section} 
\theoremstyle{plain} 
\newtheorem{theorem}{Theorem}[section]
\newtheorem{proposition}[theorem]{Proposition}
\newtheorem{lemma}[theorem]{Lemma}
\theoremstyle{definition}
\newtheorem{definition}[theorem]{Definition}
\newtheorem{example}[theorem]{Example}
\newtheorem{corollary}[theorem]{Corollary}
\theoremstyle{remark}
\newtheorem{remark}[theorem]{Remark}
\begin{document}

\title{A contact covariant approach to optimal control\\
with applications to sub-Riemannian geometry\thanks{This research was supported by the National Science Center under the grant DEC-2011/02/A/ST1/00208 ``Solvability, chaos and control in quantum systems''.}
}

\author{Micha\l{} J\'{o}\'{z}wikowski\footnote{email: mjozwikowski@gmail.com},\\
Center for Theoretical Physics,\\
 Polish Academy of Sciences\\[0.1cm]
and \\[0.1cm]
Institute of Mathematics,\\
 Polish Academy of Sciences\\[0.5cm]
 Witold Respondek\footnote{email: witold.respondek@insa-rouen.fr}\\
Normandie Universit\'{e}, France\\
 INSA de Rouen, Laboratoire de Math\'{e}matiques}

\date{\today}

\maketitle

\begin{abstract}
We discuss contact geometry naturally related with optimal control problems (and Pontryagin Maximum Principle). We explore and expand the observations of Ohsawa \cite{Ohsawa_contact_pmp_2015}, providing simple and elegant characterizations of normal and abnormal sub-Riemannian extremals. 
\end{abstract}

\paragraph*{Keywords:}Pontryagin Maximum Principle; contact geometry; contact vector field; sub-Riemannian geometry; abnormal extremal

\paragraph*{MSC 2010: }49K15; 53D10; 53C17; 58A30

\section{Introduction}

\paragraph{A contact interpretation of the Pontryagin Maximum Principle.}
In a recent paper Ohsawa \cite{Ohsawa_contact_pmp_2015} observed that for normal solutions of the optimal control problem on a manifold $Q$, the Hamiltonian evolution of the covector $\bm \Lambda_t$ in $\T^\ast (Q\times \R)$ considered in the Pontryagin Maximum Principle (PMP, in short), projects to a well-defined contact evolution in the projectivization $\PP(\T^\ast (Q\times \R))$. Here $Q\times \R$ is the extended configuration space (consisting of both the configurations $Q$ and the costs $\R$) and $\PP(\T^\ast (Q\times \R))$ is equipped with a natural contact structure. Moreover, Ohsawa observed that the maximized Hamiltonian of the PMP is precisely the generating function of this contact evolution. 

The above result was our basic inspiration to undertake this study. Our goal was to understand, from a geometric viewpoint, the role and origins of the above-mentioned contact structure in the PMP and to study possible limitations of the contact approach (does it work alike for abnormal solutions, etc.). 

As a result we prove Theorem \ref{thm:pmp_contact}, a version of the PMP, in which the standard Hamiltonian evolution of a covector curve $\wt \Lambda_t$ in $\T^\ast(Q\times\R)$ along an optimal solution $\wt q(t)\in Q\times\R$ is substituted by a contact evolution of a curve of hyperplanes $\wt\HH_t$ in $\T(Q\times\R)$ along this solution. (Note that the space of all hyperplanes in $\T(Q\times\R)$ is actually the manifold of contact elements of $Q\times\R$ and can be naturally identified with $\PP(\T^\ast (Q\times\R))$.) It is worth mentioning that this result is valid regardless of the fact whether the solution is normal or abnormal and, moreover, the contact evolution is given by a natural contact lift of the extremal vector field (regarded as a time-dependent vector field on $Q\times\R$). Finally, using the well-known relation between contact vector fields and smooth functions we were able to interpret the Pontryagin maximized Hamiltonian as a generating function of the contact evolution of $\wt \HH_t$. 

It seems to us that, apart from the very recent paper of Ohsawa \cite{Ohsawa_contact_pmp_2015}, the relation between optimal control and contact geometry has not been explored in the literature. This fact is not difficult to explain as the PMP in its Hamiltonian formulation has been very successful and as symplectic geometry is much better developed and understood than contact geometry. In our opinion, the contact approach to the PMP seems to be a promising direction of studies for at least two reasons. First of all it allows for a unified treatment of normal and abnormal solutions and, secondly, it seems to be closer to the actual geometric meaning of the PMP (we shall justify this statement below).

\paragraph{About the proof.}
The justification of Theorem \ref{thm:pmp_contact} is rather trivial. In fact, it is just a matter of interpretation of the classical proof of the PMP \cite{Pontr_Inn_math_theor_opt_proc_1962} (see also \cite{Lewis_2006} and \cite{Liberzon_2012}). Recall that geometrically the PMP says that at each point of the optimal trajectory $\wt q(t)$, the cone $\wt\K_t\subset \T_{\wt q(t)}(Q\times\R)$ approximating the reachable set can be separated, by a hyperplane $\wt\HH_t\subset\T_{\wt q(t)}(Q\times\R)$, from the direction of the decreasing cost (cf. Figure \ref{fig:pmp}). Thus in its original sense the PMP describes the evolution of a family of hyperplanes $\bm\HH_t$ (i.e., a curve in the manifold of contact elements of $Q\times\R$, identified with $\PP(\T^\ast(Q\times\R))$) along the optimal solution. This evolution is induced by the flow of the optimal control on $Q\times\R$. From this perspective the only ingredient one needs to prove Theorem \ref{thm:pmp_contact} is to show that this flow induces a contact evolution (with respect to the natural contact structure) on $\PP(\T^\ast(Q\times\R))$. It is worth mentioning that the covector curve $\wt\Lambda_t\in\T^\ast(Q\times\R)$ from the standard formulation of the PMP is nothing else than just an alternative description of the above-mentioned curve of hyperplanes, i.e., $\wt\HH_t=\ker\wt\Lambda_t$ for each time $t$. Obviously, there is an ambiguity in choosing such a $\wt\Lambda_t$, which is defined up to a rescaling. 

\paragraph{Applications.}
From the above perspective it is obvious that the description of the necessary conditions for optimality of the PMP in terms of $\wt\HH_t$'s (the contact approach) is closer to the actual geometric meaning of the PMP as it contains the direct information about the separating hyperplanes. On the contrary, in the Hamiltonian approach this information is translated into the language of covectors (not to forget the non-uniqueness of the choice of $\wt\Lambda_t$). 

We illustrate the contact approach to the PMP by discussing its applications to the sub-Riemannian (SR, in short) geodesic problem in Section \ref{sec:appl}. Recall that a SR geodesic problem on a manifold $Q$ is an optimal control problem in which the controls parametrize trajectories tangent to a smooth distribution $\D\subset\T Q$ and the cost of a trajectory is its length calculated via a given positively defined bilinear form $g:\D\times\D\ra\R$ (the SR metric). Actually, due to the Cauchy-Schwartz inequality, the trajectories minimizing the length are exactly those that minimize the kinetic energy and are parametrized by the arc-length. In such a setting, using  some elementary geometric considerations, we were able to relate $\D$ and $g$ with the separating hyperplanes $\wt\HH_t$ (Lemma \ref{prop:dim_H}). In consequence, still using elementary arguments, the following two results about SR extremals were derived:
\begin{itemize}
	\item Theorem \ref{thm:abnormal} completely characterizes abnormal SR extremals. It states that an absolutely continuous curve $q(t)\in Q$ tangent to $\D$ is an abnormal extremal if and only if the minimal distribution along $q(t)$ which contains $\D_{q(t)}$ and is invariant along $q(t)$ under the flow of the extremal vector field is of rank smaller than $\dim Q$. As a special case (for smooth vector fields) we obtain, in Corollary \ref{cor:abnormal_smooth}, the following result: if the distribution spanned by the iterated Lie brackets of a given $\D$-valued vector field $X\in\Sec(\D)$ with all possible $\D$-valued vector fields, i.e.,
		$$\<\ad_X^k(Z)\ |\ Z\in \Sec(\D),\quad k=0,1,2,\hdots>$$
is of constant rank smaller than $\dim Q$, then the integral curves of $X$ are abnormal SR extremals. 

	\item Theorem \ref{thm:normal} in a similar manner (yet under an additional assumptions that the controls are normalized with respect to the SR metric $g$) provides a complete characterization of normal SR extremals. It states that an absolutely continuous curve $q(t)\in Q$, tangent to $\D$, is a normal extremal if and only if it is of class $C^1$ with an absolutely continuous derivative and if the minimal distribution along $q(t)$ which contains these elements of $\D_{q(t)}$ that are $g$-orthogonal to $\dot q(t)$ and is invariant along $q(t)$ under the flow of the extremal vector field does not contain the direction tangent to $q(t)$ at any point. Again in the smooth case we conclude, in Corollary \ref{cor:normal}, that if for a given normalized vector field $X\in\Sec(\D)$ the distribution spanned by the iterated Lie brackets of $X$ with all possible $\D$-valued vector fields $g$-othogonal to $X$, i.e.,	
	$$\<\ad_X^k(Z)\ |\ Z\in \Sec(\D),\quad g(Z,X)=0,\quad k=0,1,2,\hdots>$$
is of constant rank and does not contain $X$ at any point of $q(t)$, then the integral curves of $X$ are normal SR extremals. 
\end{itemize}
 
The first of the above results seems not to be present in the literature. Of course a characterization of abnormal extremals in Hamiltonian terms is well-known (see, e.g., \cite{Sussmann_coord_free_pmp}) and as such has been widely used. In many particular cases (see, e.g., \cite{Jakubczyk_Krynski_Pelletier_2009}) it allowed to obtain criteria to derive abnormal extremals similar to the smooth version of Theorem \ref{thm:abnormal}. The second of the above results appears in its smooth version in \cite{Liu_Sussmann_1995} for rank-2 distributions, and in the general version in \cite{Alcheikh_Orro_Pelletier_1997} in a formulation equivalent to ours. For these reasons we do not claim to be the first to obtain the above results (although our simple formulations using the language of flows of the optimal control seem to be new). What we believe, however, to be an added value is the simplicity of derivation of these results in our approach. Indeed, our proofs use only basic geometric tools, actually nothing more sophisticated than the definition of the flow, the derivation of the tangent space to a paraboloid, and the Gram-Schmidt algorithm. 

It should be stressed that the language of flows used throughout is much more effective, and in fact simpler, than the language of Lie brackets usually applied in the study of SR extremals. Indeed, the assertions of Theorems \ref{thm:abnormal} and \ref{thm:normal} are valid for non-smooth, i.e.,  absolutely continuous  curves and bounded measurable controls, do not require any regularity assumptions (contrary to the characterization in terms of Lie brackets) and work for single trajectories (not necessary families of trajectories).  

As an illustration of the above results we give a few examples. In particular, in Examples \ref{ex:geod} and \ref{ex:geod1} we were able to provide a surprisingly easy derivation of the Riemannian geodesic equation (obtaining the equation $\nabla_{\dot \gamma}\dot \gamma=0$ from the standard Hamiltonian approach is explained in \cite{Agrachev_Barilari_Boscain_2012,Sussmann_coord_free_pmp}). In Examples \ref{ex:234}, \ref{ex:zhit_nice}, and \ref{ex:heisenberg} we re-discover some results of \cite{Liu_Sussmann_1995} and \cite{Zhitomirskii_1995} concerning rank-2 distributions.  

\paragraph{Organization of the paper.}
We begin our considerations by a technical introduction in Section \ref{sec:technical}. Our main goal in this part is to introduce, in a rigorous way, natural differential geometric tools (Lie brackets, flows of time-dependent vector fields, distributions, etc.) in the non-smooth and time-dependent setting suitable for control theory (in general, we consider controls which are only locally bounded and measurable). Most of the results presented in this section are natural generalizations of the results well-known in the smooth case. They are essentially based on the local existence and uniqueness of solutions of ODE in the sense of Caratheodory (Theorem \ref{thm:solutions_ODE}). To avoid being too technical, we moved various parts of the exposition of this section (including some proofs and definitions) to Appendix \ref{sec:appendix}. 

In Section \ref{sec:contact}, we briefly recall basic definitions and constructions of contact geometry. In particular, we show an elegant construction of contact vector fields (infinitesimal symmetries of contact distributions) in terms of equivalence classes of vector fields modulo the contact distribution. This construction is more fundamental than the standard one in terms of generating functions (which requires a particular choice of a contact form). It seems to us that so far it has not been present in the literature. 

In Section \ref{sec:ptm}, we discuss in detail a natural contact structure on the projectivization of the cotangent bundle $\PP(\T^\ast M)$. In particular, we construct a natural contact transformation $\proj{F}$ of $\PP(\T^\ast M)$ induced by a diffeomorphism $F$ of $M$. Later we study an infinitesimal counterpart of this construction, i.e., a natural lift of a vector field $X$ on $M$ to a contact vector field $\cont{X}$ on $\PP(\T^\ast M)$. 

In Section \ref{sec:pmp}, we introduce the optimal control problem for a control system on a manifold $Q$ and formulate the PMP in its standard version (Theorem \ref{thm:pmp_hamiltonian}). Later we sketch the standard proof of the PMP introducing the cones $\wt\K_t$ and the separating hyperplanes $\wt\HH_t$. A proper interpretation of these objects, together with our previous considerations about the geometry of $\PP(\T^\ast M)$ from Section \ref{sec:ptm}, allows us to conclude Theorems \ref{thm:pmp_contact} and \ref{thm:pmp_covariant} which are the contact and the covariant versions of the PMP, respectively. 

Finally, in the last Section \ref{sec:appl}, we concentrate our attention on the geometry of the cones $\wt\K_t$ and hyperplanes $\wt\HH_t$ for the Riemannian and sub-Riemannian geodesic problems. The main results of that section, which characterize normal and abnormal SR extremals, were already discussed in detail in the paragraph ``Applications'' above. 
\section{Technical preliminaries}
\label{sec:technical}

As indicated in the Introduction, in this paper we shall apply the language of differential geometry to optimal control theory. This requires some attention as differential geometry uses tools such as vector fields, their flows, distributions and Lie brackets which are \emph{a priori} smooth, while in control theory it is natural to work with objects of lower regularity. The main technical difficulty is a rigorous introduction of the notion of the flow of a time-dependent vector field (TDVF, in short) with the time-dependence being, in general, only measurable. A solution of this problem, provided within the framework of chronological calculus, can be found in \cite{Agrachev_Sachkov_2004}. The recent monograph \cite{Jafarpour_Lewis_2014} with a detailed discussion of regularity aspects is another exhaustive source of information about this topic. 

Despite the existence of the above-mentioned excellent references, we decided to present our own explication of the notion of the flow of a TDVF. The reasons for that decision are three-fold. First of all this makes our paper self-contained. Secondly, we actually do not need the full machinery of \cite{Agrachev_Sachkov_2004} or \cite{Jafarpour_Lewis_2014}, so we can present a simplified approach. Finally, for future purposes we need to concentrate our attention on some specific aspects (such as the transport of a distribution along an integral curve of a TDVF and the relation of this transport with the Lie bracket) which are present in neither \cite{Agrachev_Sachkov_2004}, nor \cite{Jafarpour_Lewis_2014}. Our goal in this section is to give a minimal yet sufficient introduction to the above-mentioned concepts. We move technical details and rigorous proofs to Appendix \ref{sec:appendix}. 

\paragraph{Time-dependent vector fields and their flows.}

 Let $M$ be a smooth manifold. By a \emph{time-dependent vector field} on $M$ (\emph{TDVF}, in short) we shall understand a family of vector fields $X_t\in\X(M)$ parametrized by a real parameter $t$ (the \emph{time}). Every such a field defines the following non-autonomous ODE\footnote{Sometimes it is convenient to identify a TDVF $X_t$ on $M$ with the vector field $\wtilde X(x,t)=X_t(x)+\pa_t$ on $M\times\R$. Within this identification equation \eqref{eqn:ODE_1} is an $M$-projection of the autonomous ODE $(\dot x,\dot t)=\wtilde X(x,t)$ defined on $M\times\R$.} on $M$
\begin{equation}
\label{eqn:ODE_1}
\dot x(t)=X_t(x(t))\ .
\end{equation} 
A technical assumption that the map $(x,t)\mapsto X_t(x)$ is \emph{\nice} in the sense of Definition \ref{def:nice_map} below guarantees that solutions of \eqref{eqn:ODE_1} (in the sense of Caratheodory) locally exist, are unique and are \emph{absolutely continuous with bounded derivatives} (\emph{ACB}, in short, see Appendix \ref{sec:appendix}) with respect to the time $t$. For this reason from now on we shall restrict our attention to TDVF's $X_t$ satisfying the above assumption. We will call them \emph{\nice\ TDVF}'s. 
In a very similar context the notion of a Caratheodory section was introduced in the recent monograph \cite{Jafarpour_Lewis_2014}. Actually, in the language of the latter work our notion of a Caratheodory TDVF would be called a locally bounded Caratheodory vector field of class $C^1$. 

A solution of \eqref{eqn:ODE_1} with the initial condition $x(t_0)=x_0$ will be denoted by $x(t;t_0,x_0)$ and called an \emph{integral curve} of $X_t$. When speaking about families of such solutions with different initial conditions it will be convenient to introduce (local) maps $\A{tt_0}:M\ra M$ defined by $\A{tt_0}(x_0):=x(t;t_0,x_0)$. 
\begin{lemma}\label{lem:TDVF_technical}
Let $X_t\in\X(M)$ be a \nice\ TDVF on $M$. Then
\begin{itemize}
	\item For $t$ close enough to $t_0$ the maps $\A{tt_0}:M\ra M$ are well-defined local diffeomorphisms. 
	\item Moreover, they satisfy the following properties
	\begin{equation}
\label{eqn:t_flow}
\A{t_0t_0}=\id_M\quad \text{and}\quad \A{t\tau}(\A{\tau t_0})=\A{t t_0}\ ,
\end{equation}
whenever both sides are defined. 
\end{itemize}
\end{lemma}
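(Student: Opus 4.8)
The plan is to derive both assertions from the Caratheodory existence–uniqueness theory of Theorem~\ref{thm:solutions_ODE}, the only extra input being the $C^1$-dependence of solutions on the initial point.

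\emph{Domain and regularity.} First I would record that the \emph{flow domain}
\[
\mathcal{O}:=\bigl\{(t,t_0,x_0)\in\R\times\R\times M\ :\ x(\cdot;t_0,x_0)\text{ is defined at }t\bigr\}
\]
is an open subset of $\R\times\R\times M$ and that the map $(t,t_0,x_0)\mapsto x(t;t_0,x_0)$ is continuous on $\mathcal{O}$, ACB in the time variables, and of class $C^1$ in $x_0$ (more generally, of the same smoothness in the space variable as $X_t$ itself). Openness of $\mathcal{O}$ and continuity of the flow follow from Gronwall's inequality in integral form applied to Caratheodory ODEs; the regularity in $x_0$ is the standard variational-equation argument: the candidate space derivative $J(t)=\pa x(t;t_0,x_0)/\pa x_0$ solves the \emph{linear} Caratheodory ODE $\dot J(t)=(D_xX_t)(x(t;t_0,x_0))\cdot J(t)$ with $J(t_0)=\id$, which again has a unique ACB solution, and a difference-quotient estimate combined with dominated convergence shows that this $J(t)$ is indeed the partial derivative. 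In particular, fixing $t_0$ and $x_0$ I get $\varepsilon>0$ and a neighbourhood $U\ni x_0$ with $\{t\}\times\{t_0\}\times U\subset\mathcal{O}$ and $\A{tt_0}|_U$ smooth for $|t-t_0|<\varepsilon$; this yields the first assertion except for the word ``diffeomorphism''.

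\emph{The group law.} The identity $\A{t_0t_0}=\id_M$ is immediate from the initial condition $x(t_0;t_0,x_0)=x_0$. For the cocycle identity in \eqref{eqn:t_flow} I would fix $x_0$, set $y_0:=\A{\tau t_0}(x_0)=x(\tau;t_0,x_0)$, and compare the two curves $s\mapsto x(s;t_0,x_0)$ and $s\mapsto x(s;\tau,y_0)$: both solve \eqref{eqn:ODE_1} and both take the value $y_0$ at $s=\tau$, so by the uniqueness part of Theorem~\ref{thm:solutions_ODE} they coincide on the intersection of their intervals of definition; evaluating at $s=t$ gives $\A{t\tau}(\A{\tau t_0}(x_0))=\A{tt_0}(x_0)$. (One could instead lift $X_t$ to the autonomous field $\wtilde X=X_t+\pa_t$ on $M\times\R$ and invoke the autonomous flow property, but the direct uniqueness argument is shorter.)

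\emph{Local diffeomorphism, and the main obstacle.} Specialising the cocycle identity from the previous step by setting $t=t_0$ gives $\A{t_0\tau}\circ\A{\tau t_0}=\id_M$, and relabelling $\tau\leftrightarrow t_0$ gives $\A{\tau t_0}\circ\A{t_0\tau}=\id_M$; thus $\A{t_0t}$ is a genuine two-sided inverse of $\A{tt_0}$. After shrinking the $U$ and $\varepsilon$ from the first step (legitimate by openness of $\mathcal{O}$ and continuity of the flow) so that all the relevant compositions are defined, both $\A{tt_0}$ and $\A{t_0t}$ are smooth there, so $\A{tt_0}$ is a smooth map with a smooth inverse, i.e.\ a local diffeomorphism — which completes the proof. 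The only genuinely non-formal ingredient will be the $C^1$-dependence on $x_0$ underpinning the word ``diffeomorphism'': this is exactly where the class-$C^1$ clause of Definition~\ref{def:nice_map} enters, and although the argument is classical for smooth ODEs, it must be re-run for a right-hand side that is merely measurable in $t$, so the usual continuity-in-$t$ reasoning is replaced throughout by Gronwall estimates and dominated convergence — precisely the routine-but-careful bookkeeping deferred to Appendix~\ref{sec:appendix}. The semigroup law and the implication ``smooth with smooth inverse $\Rightarrow$ local diffeomorphism'' are purely formal consequences of uniqueness.
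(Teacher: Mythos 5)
Your argument is correct and follows essentially the same route as the paper: the paper's proof simply invokes Theorem~\ref{thm:solutions_ODE} (existence, uniqueness, and $C^1$-dependence on $x_0$, cited from Bressan--Piccoli) and notes that \eqref{eqn:t_flow} follows from uniqueness of integral curves, which is exactly your cocycle-plus-inverse argument; your re-derivation of openness of the flow domain and of the variational equation merely duplicates material the paper delegates to Theorem~\ref{thm:solutions_ODE}. The only nitpick is that in this Caratheodory setting the dependence on $x_0$ is $C^1$ rather than ``smooth'', so ``local diffeomorphism'' should be read as $C^1$-diffeomorphism, which your two-sided-inverse argument delivers unchanged.
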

Since $X_t$ is \nice, it satisfies locally the assumptions of Theorem \ref{thm:solutions_ODE}. Now the justification of Lemma \ref{lem:TDVF_technical} follows directly from the latter result. Properties \eqref{eqn:t_flow} are merely a consequence of the fact that $t\mapsto\A{tt_0}(x_0)$ is an integral curve of $X_t$. 

\begin{definition}
\label{def:td_flow} The family of local diffeomorphisms $\A{t\tau}:M\ra M$ described in the above lemma will be called the \emph{time-dependent flow} of $X_t$ (TD flow, in short).
\end{definition} 

Clearly $\A{tt_0}$ is a natural time-dependent analog of the notion of the flow of a vector field. This justifies the name ``TD flow''. It is worth noticing that, alike for the standard notion of the flow, there is a natural correspondence between TD flows and \nice\ TDVF's. 

\begin{lemma}
\label{lem:exist_TDVF}
Let $\A{t\tau}:M\ra M$ be a family of local diffeomorphisms satisfying \eqref{eqn:t_flow} and such that for each choice of $x_0\in M$ and $t_0\in\R$ the map $t\mapsto \A{tt_0}(x_0)$ is ACB. Then $\A{t\tau}$ is a TD flow of some \nice\ TDVF $X_t$. 
\end{lemma}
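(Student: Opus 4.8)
The plan is to recover the TDVF $X_t$ from the family $\A{t\tau}$ by differentiation, and then to check that the reconstructed field is \nice\ and that the given family is indeed its TD flow. Concretely, for each fixed $t$ and each $x\in M$ I would define
\[
X_t(x):=\left.\frac{\dd}{\dd s}\right|_{s=t}\A{st}(x),
\]
i.e. the velocity at time $s=t$ of the curve $s\mapsto\A{st}(x)$. This makes sense for a.e. $t$ because, by hypothesis, $s\mapsto\A{st}(x)$ is ACB, hence differentiable almost everywhere; the cocycle identity $\A{st}=\A{s\tau}\A{\tau t}$ will let me transfer differentiability in the first argument at the base point to differentiability along the whole curve, which is what is needed to even state that $X_t$ is well-defined as a TDVF.

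Next I would verify that $t\mapsto\A{tt_0}(x_0)$ is an integral curve of this $X_t$. Fix $t_0,x_0$ and write $\gamma(t):=\A{tt_0}(x_0)$. For $t$ a point of differentiability, the cocycle law gives $\A{t+h,t_0}(x_0)=\A{t+h,t}(\A{tt_0}(x_0))=\A{t+h,t}(\gamma(t))$, so
\[
\dot\gamma(t)=\left.\frac{\dd}{\dd h}\right|_{h=0}\A{t+h,t}(\gamma(t))=X_t(\gamma(t)),
\]
which is exactly \eqref{eqn:ODE_1}. Since $\gamma$ is ACB by assumption and the identity holds a.e., $\gamma$ is a Caratheodory solution with $\gamma(t_0)=\A{t_0t_0}(x_0)=x_0$. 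Uniqueness of such solutions (from Theorem \ref{thm:solutions_ODE}, once we know $X_t$ is \nice) then forces $\A{tt_0}(x_0)=x(t;t_0,x_0)$, i.e. $\A{t\tau}$ is the TD flow of $X_t$ in the sense of Definition \ref{def:td_flow}.

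The main obstacle is regularity: I must show that the $X_t$ defined above is a \nice\ TDVF, i.e. that $(x,t)\mapsto X_t(x)$ satisfies Definition \ref{def:nice_map} — smoothness (at least $C^1$) in $x$ for fixed $t$, measurability in $t$ for fixed $x$, and a local boundedness/domination condition. Smoothness in $x$ should follow from smooth dependence of $\A{st}$ on the base point: since each $\A{st}$ is a diffeomorphism depending smoothly on the point, its $s$-derivative at $s=t$ depends smoothly on $x$ — though making this fully rigorous requires knowing that the family $\A{st}$ depends on $(s,t)$ jointly well enough that differentiation in $s$ commutes with differentiation in $x$; the clean way is to work in a local chart, expand $\A{t+h,t}(x)=x+hX_t(x)+o(h)$ uniformly on compacts (using the ACB bound on the increments), and read off the needed estimates on $X_t$ and its $x$-derivatives from Cauchy estimates applied to the holomorphic/smooth dependence, or more elementarily from the uniform ACB bounds. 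Measurability of $t\mapsto X_t(x)$ and the local bound $|X_t(x)|\le m(t)$ with $m$ locally integrable (indeed locally bounded) come directly from the assumed ACB character of $t\mapsto\A{tt_0}(x_0)$: the almost-everywhere derivative of an ACB curve is a bounded measurable function, and the bound is uniform over $x$ in a compact set by a standard equicontinuity/compactness argument. Once \nice ness is established, everything else is a bookkeeping consequence of the cocycle identity \eqref{eqn:t_flow} and the uniqueness part of Theorem \ref{thm:solutions_ODE}, so I expect the write-up to be short modulo this regularity check, which I would likely relegate (as the authors do for similar statements) to Appendix \ref{sec:appendix}.
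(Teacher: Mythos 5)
Your proposal follows exactly the paper's route: the paper simply declares the natural candidate $X_t(x):=\frac{\pa}{\pa\tau}\big|_{\tau=t}\A{\tau t}(x)$ and leaves ``the remaining details to the reader,'' and those details are precisely what you supply (the cocycle identity \eqref{eqn:t_flow} giving $\dot\gamma(t)=X_t(\gamma(t))$ a.e.\ along $\gamma(t)=\A{tt_0}(x_0)$, plus uniqueness from Theorem \ref{thm:solutions_ODE}). Your verification that $X_t$ is a \nice\ TDVF is the sketchiest part of your write-up, but since the paper omits that check altogether, your argument is, if anything, more complete than the original.
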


The natural candidate for such a TDVF is simply $X_t(x):=\frac{\pa}{\pa \tau}\big|_{\tau=t}\A{\tau t}(x)$. The remaining details are left to the reader. 

\paragraph{Distributions along integral curves of TDVF's.}
In this paragraph we shall introduce basic definitions and basic properties related with distributions defined along a single ACB integral curve $x(t)=x(t;t_0,x_0)$ (with $t\in[t_0,t_1]$) of a \nice\ TDVF $X_t$. In particular, for future purposes it will be crucial to understand the behavior of such distributions under the TD flow $\A{t\tau}$ of $X_t$. 

\begin{definition} 
\label{def:distib_at_all}
Let $x(t)=x(t;t_0,x_0)$ with $t\in[t_0,t_1]$ be an integral curve of a \nice\ TDVF $X_t$. 
A \emph{distribution $\B$ along $x(t)$} is a family of linear subspaces  $\B_{x(t)}\subset \T_{x(t)} M$ attached at each point of the considered curve. In general, the dimension of $\B_{x(t)}$ may vary from point to point. 

By an \emph{ACB section of $\B$} we will understand a vector field $Z$ along $x(t)$ such that $Z(x(t))\in \B_{x(t)}$ for every $t\in[t_0,t_1]$ and that the map $t\mapsto Z(x(t))$ is ACB. The space of such sections will be denoted by $\Sec_{ACB}(\B)$. A distribution $\B$ along $x(t)$ shall be called \emph{\sexy}\ if it is point-wise spanned by a finite set of elements of $\Sec_{ACB}(\B)$.

We shall say that $\B$ is \emph{$\A{t\tau}$-invariant} (or \emph{respected by a TD flow $\A{t\tau}$}) \emph{along $x(t)$} if 
$$\B_{x(t)}=\T \A{t\tau}(\B_{x(\tau)})$$
for every $t, \tau\in[t_0,t_1]$. Equivalently, $\B_{x(t)}=\T \A{tt_0}(\B_{x(t_0)})$ for every $t\in[t_0,t_1]$. In particular, if $\B$ is respected by $\A{t\tau}$ along $x(t)$ then it is of constant rank along $x(t)$. This follows from the fact that each map $\A{t\tau}$ is a local diffeomorphism.
\end{definition}

Let us remark that the idea behind the notion of a \sexy\ distribution is to provide a natural substitution of the notion of smoothness in the situation where a distribution is considered along a non-smooth curve. Observe namely that a restriction of a smooth vector field on $M$ to an ACB curve $x(t;t_0,x_0)$ is \emph{a priori} only an ACB vector field along $x(t;t_0,x_0)$.  
\begin{proposition}
\label{prop:sexy_examples}
\Sexy\ distributions appear naturally in the following two situations:
\begin{itemize}
	\item A restriction of a locally finitely generated smooth distribution on $M$ to an ACB curve $x(t)=x(t;t_0,x_0)$ is \sexy.
	\item Let $\A{t\tau}$ be the TD flow of a \nice\ TDVF $X_t$ and let $\B$ be a distribution along an integral curve $x(t)=x(t;t_0,x_0)$ of $X_t$. Then if $\B$ is $\A{t\tau}$-invariant along $x(t)$, it is also \sexy.
\end{itemize} 
\end{proposition}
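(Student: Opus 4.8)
The plan is to establish each of the two bullet points separately, since they invoke different mechanisms but share the same target: exhibiting finitely many ACB vector fields along $x(t)$ whose values span the distribution pointwise.

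\textbf{First bullet (restriction of a smooth distribution).}
I would start by unpacking the hypothesis: a locally finitely generated smooth distribution $\D$ on $M$ means that near every point $p\in M$ there is a neighborhood $U_p$ and finitely many smooth vector fields $Y_1,\dots,Y_k\in\X(U_p)$ that span $\D_y$ for every $y\in U_p$. Now fix the compact time interval $[t_0,t_1]$. Since $x(t)$ is continuous (indeed ACB, hence absolutely continuous), its image is a compact subset of $M$, so by a standard covering argument I can cover $x([t_0,t_1])$ by finitely many such neighborhoods $U_{p_1},\dots,U_{p_m}$. The mild technical nuisance here is that the local generators on overlapping charts need not agree, so one cannot simply take ``the'' generators globally along the curve; instead I would take the (finite) union of all the generating sets $\{Y^{(j)}_1,\dots,Y^{(j)}_{k_j}\}$ over $j=1,\dots,m$, extended arbitrarily (e.g. by a partition of unity, or just noting they are only needed on the curve) — actually the cleanest route is: for each $j$ pick a smooth bump function $\chi_j$ supported in $U_{p_j}$ with $\sum_j\chi_j\equiv 1$ on a neighborhood of $x([t_0,t_1])$, and form the globally defined smooth vector fields $\chi_j Y^{(j)}_i$. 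Their restrictions $t\mapsto (\chi_j Y^{(j)}_i)(x(t))$ are compositions of a smooth map with an ACB curve, hence ACB (this is exactly the remark preceding the Proposition, and follows from the chain rule / local Lipschitz estimates for ACB curves). Finally, at each time $t$, the point $x(t)$ lies in some $U_{p_j}$ with $\chi_j(x(t))\neq 0$, and there $\{\chi_j(x(t))Y^{(j)}_i(x(t))\}_i$ already spans $\D_{x(t)}$; so the full finite collection spans $\D_{x(t)}$ pointwise. This is precisely the definition of \sexy.

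\textbf{Second bullet ($\A{t\tau}$-invariant distributions).}
Here the hypothesis already forces $\B$ to have constant rank, say $r$, along $x(t)$ (as noted in Definition \ref{def:distib_at_all}, because each $\A{t\tau}$ is a local diffeomorphism and $\B_{x(t)}=\T\A{tt_0}(\B_{x(t_0)})$). The plan is to choose a basis $v_1,\dots,v_r$ of the single vector space $\B_{x(t_0)}\subset\T_{x(t_0)}M$ and then \emph{transport} it forward: define $Z_i(x(t)):=\T\A{tt_0}(v_i)$ for $i=1,\dots,r$ and $t\in[t_0,t_1]$. By $\A{t\tau}$-invariance these values lie in $\B_{x(t)}$, and since $\T\A{tt_0}$ is a linear isomorphism onto $\B_{x(t)}$ they form a basis of $\B_{x(t)}$ at each time, hence span it pointwise. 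It remains only to check that $t\mapsto Z_i(x(t))=\T\A{tt_0}(v_i)$ is ACB. This is the one genuinely technical point, and I expect it to be the main obstacle: it amounts to regularity of the flow in its dependence on $t$ at the level of the tangent map. The clean way to get it is to recall (from the technical development underlying Theorem \ref{thm:solutions_ODE} and Lemma \ref{lem:TDVF_technical}, and detailed in Appendix \ref{sec:appendix}) that the tangent-lifted flow $\T\A{tt_0}$ on $\T M$ is itself the TD flow of the tangent lift $\T X_t$ of the \nice\ TDVF $X_t$ — the tangent lift of a \nice\ TDVF is again \nice\ (the linearization in the fibre direction is as regular in $(x,t)$ as the original field) — and therefore, by Lemma \ref{lem:TDVF_technical} applied on $\T M$, the curve $t\mapsto \T\A{tt_0}(v_i)$ is an integral curve of a \nice\ TDVF and is consequently ACB in $t$. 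Alternatively, working in a chart, $\T\A{tt_0}$ is represented by the fundamental matrix solution $\Phi(t)$ of the linear ODE $\dot\Phi(t)=\mathrm{D}_x X_t(x(t))\cdot\Phi(t)$ whose coefficient matrix $t\mapsto \mathrm{D}_x X_t(x(t))$ is locally bounded and measurable, so $\Phi$ is locally Lipschitz in $t$, i.e. ACB; then $Z_i(x(t))$ has ACB components. Either way, the Proposition follows. I would present the tangent-lift argument as the main line, remarking that the matrix computation is the concrete incarnation, and (as the authors do elsewhere) defer the proof that tangent lifts preserve the \nice\ property to the Appendix.
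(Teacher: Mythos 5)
Your proof is correct and, in substance, it is the paper's own argument: for the first bullet the paper simply invokes the observation that a restriction of a smooth vector field to an ACB curve is ACB (your covering/bump-function step just makes the passage from local generators to finitely many global ones explicit, which is fine), and for the second bullet the paper likewise transports a basis of $\B_{x(t_0)}$ by $\T\A{tt_0}$ and gets ACB-ness from Lemma \ref{lem:bracket_single}, whose ACB claim rests on the variational equation \eqref{eqn:tangent_ODE} in Theorem \ref{thm:solutions_ODE} --- i.e.\ exactly your ``fundamental matrix $\Phi(t)$ of the linear ODE with locally bounded measurable coefficients'' computation. The one thing to fix is the order of your two routes for that regularity claim: the assertion that the tangent lift of a \nice\ TDVF is again \nice\ is not automatic under Definition \ref{def:nice_map}, since the lift $(x,v)\mapsto\bigl(X_t(x),\tfrac{\pa X_t}{\pa x}(x)v\bigr)$ would need to be differentiable in $(x,v)$ with a derivative continuous in $(x,v)$, which requires second $x$-derivatives of $X_t$ that a \nice\ field need not possess ($\tfrac{\pa X_t}{\pa x}$ is only assumed continuous in $x$). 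So you cannot ``defer'' that claim to the Appendix as stated; present the variational-equation/fundamental-matrix argument (equivalently, cite Theorem \ref{thm:solutions_ODE} or Lemma \ref{lem:bracket_single}) as the main line, which is what the paper does, and the proof is complete.
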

The justification of the above result is straightforward. Regarding the first situation it was already observed that a restriction of a smooth vector field to an ACB curve is an ACB vector field. 
In the second situation, the distribution $\B$ is spanned by vector fields $\T\A{tt_0}(X^i)$ with $i=1,\hdots,k$, where $\{X^1,\hdots,X^k\}$ is any basis of $\B_{x_0}$. By the results of Lemma \ref{lem:bracket_single} these fields are ACB.

Given a distribution $\B$ along $x(t)$ we can always extend it to the smallest (with respect to inclusion) distribution along $x(t)$ containing $\B$ and respected by the TD flow $\A{t\tau}$ along $x(t)$. This construction will play a crucial role in geometric characterization of normal and abnormal SR extremals in Section \ref{sec:appl}. 

\begin{proposition}
\label{prop:AB}
 Let $x(t)=x(t;t_0,x_0)$ with $t\in[t_0,t_1]$ be a trajectory of a TDVF $X_t$. Let $\A{t\tau}$ be the TD flow of $X_t$ and let $\B$ be a distribution along $x(t)$. Then 
$$\A{\bullet}(\B)_{x(t)}:=\vect\{\T \A{t\tau}(X)\ |\ X\in\B_{x(\tau)},\quad t_0\leq\tau\leq t_1\}$$
is the smallest distribution along $x(t)$ which contains $\B$ and is respected by the TD flow $\A{t\tau}$ along $x(t)$.
\end{proposition}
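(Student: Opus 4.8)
The plan is to verify the three required properties of $\A{\bullet}(\B)$ in turn: that it is a distribution along $x(t)$ containing $\B$, that it is respected by the TD flow $\A{t\tau}$, and that it is the smallest such distribution. The first property is immediate from the definition: taking $\tau = t$ and using $\A{tt}=\id_M$ from \eqref{eqn:t_flow}, every $X\in\B_{x(t)}$ lies in $\A{\bullet}(\B)_{x(t)}$, so $\B\subseteq\A{\bullet}(\B)$; and by construction $\A{\bullet}(\B)_{x(t)}$ is a linear subspace of $\T_{x(t)}M$ for each $t$.

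The heart of the argument is showing $\A{t\tau}$-invariance, i.e. $\A{\bullet}(\B)_{x(t)} = \T\A{t\tau}\big(\A{\bullet}(\B)_{x(\tau)}\big)$ for all $t,\tau\in[t_0,t_1]$. First I would establish the inclusion $\T\A{t\tau}\big(\A{\bullet}(\B)_{x(\tau)}\big)\subseteq\A{\bullet}(\B)_{x(t)}$: a generator of $\A{\bullet}(\B)_{x(\tau)}$ has the form $\T\A{\tau\sigma}(X)$ with $X\in\B_{x(\sigma)}$ and $t_0\le\sigma\le t_1$; applying $\T\A{t\tau}$ and using the cocycle identity $\A{t\tau}\circ\A{\tau\sigma}=\A{t\sigma}$ from \eqref{eqn:t_flow} (hence $\T\A{t\tau}\circ\T\A{\tau\sigma}=\T\A{t\sigma}$ by the chain rule) gives $\T\A{t\sigma}(X)$, which is again a generator of $\A{\bullet}(\B)_{x(t)}$. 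Since $\T\A{t\tau}$ is linear, the inclusion passes from generators to the whole span. The reverse inclusion follows by the same computation applied to $\A{\tau t}$ (which exists and equals $(\A{t\tau})^{-1}$ because the $\A{t\tau}$ are local diffeomorphisms satisfying \eqref{eqn:t_flow}): $\A{\bullet}(\B)_{x(t)} = \T\A{t\tau}\big(\T\A{\tau t}(\A{\bullet}(\B)_{x(t)})\big)\subseteq\T\A{t\tau}\big(\A{\bullet}(\B)_{x(\tau)}\big)$. Combining the two inclusions gives equality. One should note that this argument is purely pointwise and algebraic; the only analytic input needed — that the generating sections $t\mapsto\T\A{tt_0}(X^i)$ are ACB, so that $\A{\bullet}(\B)$ is genuinely \sexy\ in the sense of Definition \ref{def:distib_at_all} — is already recorded in Proposition \ref{prop:sexy_examples} and Lemma \ref{lem:bracket_single}.

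For minimality, suppose $\C$ is any distribution along $x(t)$ that contains $\B$ and is $\A{t\tau}$-invariant along $x(t)$. I must show $\A{\bullet}(\B)_{x(t)}\subseteq\C_{x(t)}$ for every $t$. It suffices to check this on generators: a generator of $\A{\bullet}(\B)_{x(t)}$ is $\T\A{t\tau}(X)$ with $X\in\B_{x(\tau)}\subseteq\C_{x(\tau)}$; by $\A{t\tau}$-invariance of $\C$ we have $\T\A{t\tau}(\C_{x(\tau)})=\C_{x(t)}$, hence $\T\A{t\tau}(X)\in\C_{x(t)}$. Since $\C_{x(t)}$ is a linear subspace, it contains the span of all such generators, i.e. $\A{\bullet}(\B)_{x(t)}\subseteq\C_{x(t)}$.

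I do not expect a serious obstacle here — the statement is essentially a bookkeeping exercise in the flow identities \eqref{eqn:t_flow} together with linearity of tangent maps. The one point requiring mild care is making sure that when we write $\A{t\tau}$, $\A{\tau\sigma}$, and their composites, all the relevant local diffeomorphisms are simultaneously defined along the compact time interval $[t_0,t_1]$; this is handled by the standard covering/continuation argument (the curve $x(t)$ is a fixed integral curve on a compact interval, so finitely many applications of the local existence statement of Lemma \ref{lem:TDVF_technical} cover it), and it is exactly the kind of technicality the paper defers to Appendix \ref{sec:appendix}. The other subtlety worth a sentence is that $\A{\bullet}(\B)$ must be shown to be a \sexy\ distribution so that the phrase "distribution along $x(t)$" is used consistently; but this is immediate from the second bullet of Proposition \ref{prop:sexy_examples} once invariance is established, so it can be mentioned in passing rather than proved afresh.
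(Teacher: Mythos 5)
Your proof is correct and follows essentially the same route as the paper's own (very brief) justification: minimality is checked on the spanning vectors $\T\A{t\tau}(X)$, and the $\A{t\tau}$-invariance of $\A{\bullet}(\B)$ is deduced from the cocycle property \eqref{eqn:t_flow}. Your version simply spells out the details (both inclusions for invariance, the use of $\A{tt}=\id_M$, the domain-of-definition caveat) that the paper leaves implicit.
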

Obviously, any distribution $\A{t\tau}$-invariant along $x(t)$ and containing $\B_{x(t)}$ must contain $\A{\bullet}(\B)_{x(t)}$. The fact that the latter is indeed $\A{t\tau}$-invariant along $x(t)$ follows easily from property \eqref{eqn:t_flow}.

\paragraph{Lie brackets and distributions.}
Constructing distributions $\A{t\tau}$-invariant along $x(t)$ introduced in Proposition \ref{prop:AB}, although conceptually very simple, is not very useful from the practical point of view, as it requires calculating the TD flow $\A{t\tau}$. This difficulty can be overcome by passing to an infinitesimal description in terms of the Lie brackets, however, for a price of loosing some generality. In this  paragraph we shall discuss this and some related problems in detail.    

\begin{definition}
\label{def:bracket}
Let $X_t$ be a \nice\ TDVF and $x(t)=x(t;t_0,x_0)$ its integral curve. Given any smooth vector field $Z\in \mathfrak{X}(M)$ we define \emph{the Lie bracket of $X_t$ and $Z$ along $x(t)$} by the formula 
$$[X_t,Z]_{x(t)}\ ,$$ 
i.e., we calculate the standard Lie bracket $[X_t,Z]$ ``freezing'' the time $t$ and then evaluate it at the point $x(t)$, thus obtaining a well-defined field of vectors along $x(t)$ (the regularity of the map $t\mapsto [X_t,Z]_{x(t)}$ is a separate issue that we shall discuss later).
\end{definition}
For future purposes we would like to extend Definition \ref{def:bracket} to be able to calculate the bracket $[X_t,Z]_{x(t)}$ also for fields $Z$ of lower regularity. That can be done, but at a price that the bracket $[X_t,Z]_{x(t)}$ would be defined only for almost every (a.e., in short) $t\in[t_0,t_1]$. The details of this construction are provided below.  

As a motivation recall that for $M=\R^n$, given two smooth vector fields $X,Z\in \X(\R^n)$ (understood as maps $X,Z:\R^n\ra\R^n$) their Lie bracket at a point $x_0$ equals
$$[X,Z]_{x_0}=\frac{\pa}{\pa t}\Big|_{t=0}Z(x(t))-\frac{\pa}{\pa s}\Big|_{s=0}X(z(s))\ ,$$ 
where $t\mapsto x(t)$ is the integral curve of $X$ emerging from $x_0$ at time $0$ (in particular, $\frac{\pa}{\pa t}\big|_{0}x(t)=X(x_0)$) and $s\mapsto z(s)$ is the integral curve of $Z$ emerging from $x_0$ at time $0$ (in particular, $\frac{\pa}{\pa s}\big|_0z(s)=Z(x_0)$). The above formula, actually, allows to define $[X,Z]_{x_0}$ on any smooth manifold $M$, simply by taking it as the definition of the Lie bracket $[X,Z]_{x_0}$ in a particular local coordinate system on $M$. It is an easy exercise to show that $[X,Z]_{x_0}$ defined in such a way is a true geometric object (i.e., it does not depend on the particular choice of a local chart).  
Note that in order to calculate $[X,Z]_{x_0}$ we need only to know $X$ along $s\mapsto z(s)$ and $Z$ along $t\mapsto x(t)$. 

Observe that to use directly the above computational definition to calculate the Lie bracket $[X_t,Z]_{x(t)}$ along $x(t)=x(t;t_0,x_0)$ we should use a separate integral curve of the field $X_t$ (with ``frozen'' time) for every $t\in[t_0,t_1]$, i.e., 
\begin{equation}
\label{eqn:lie_bracket}
[X_t,Z]_{x(t)}=\frac{\pa}{\pa \tau}\Big|_{t}Z(x_t(\tau))-\frac{\pa}{\pa s}\Big|_{s=0}X_t(z(s,t))\ ,
\end{equation} 
where for each $t\in[t_0,t_1]$ the map $\tau\mapsto x_t(\tau)$ is the integral curve of the field $X_t$ emerging from the point  $x(t)$ at time $\tau=t$ and $s\mapsto z(s,t)=z(s;0,x(t))$ is the integral curve of $Z$ emerging from $x(t)$ at $s=0$, i.e., $z(t,0)=x(t)=x(t;t_0,x_0)$ and $\frac{\pa}{\pa s}\big|_0z(t,s)=Z(x(t))$. Observe now that by definition $\dot x_t(t)=X_t(x(t))=\dot x(t)$ and thus \eqref{eqn:lie_bracket} holds for $x_t(\tau)=x(\tau)$. What is more, \eqref{eqn:lie_bracket} is well-defined at a given time $t\in[t_0,t_1]$ also for any vector field $Z$ on $M$ (not necessarily smooth) such that the map $\tau\mapsto Z(x(\tau))$ is differentiable at $\tau=t$. This observation justifies the following statement
\begin{proposition}\label{prop:bracket_1}
Assuming that $t\mapsto Z(x(t))$ is an ACB map and that $X_t$ is a \nice\ TDVF, the Lie bracket $[X_t,Z]_{x(t)}$ is defined by formula \eqref{eqn:lie_bracket} almost everywhere along $x(t)$. In fact, it is well-defined at all regular points of $t\mapsto Z(x(t))$. Moreover, $t\mapsto [X_t,Z]_{x(t)}$ is a measurable and locally bounded map.

The Lie bracket $[X_t,Z]_{x(t)}$, is completely determined by the values of $Z$ along $x(t)$ and by the values of $X_t$ in a neighborhood of $x(t)$. 
\end{proposition}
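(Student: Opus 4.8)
The plan is to deduce every assertion from the right-hand side of \eqref{eqn:lie_bracket} once its two summands have been rewritten in a usable form in a fixed chart around $x(t)$. The first summand $\frac{\pa}{\pa\tau}\big|_t Z(x_t(\tau))$ equals $\frac{\dd}{\dd\tau}\big|_t Z(x(\tau))$, since $x_t(\tau)=x(\tau)$ as observed just below \eqref{eqn:lie_bracket}; this is merely the derivative of the map $\tau\mapsto Z(x(\tau))$, which is ACB by hypothesis and hence, by the classical differentiability theorem for absolutely continuous functions, is differentiable with (essentially) bounded derivative off a Lebesgue-null set. So the first summand exists at every regular point of $t\mapsto Z(x(t))$ (i.e.\ every point of differentiability of that map), the exceptional set being null; at the cost of a further null set we may also assume that $x(\cdot)$ is differentiable at $t$ (it is ACB, being an integral curve of the \nice\ TDVF $X_t$) and that $X_t$ is of class $C^1$ near $x(t)$. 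Second, by the chain rule the summand $\frac{\pa}{\pa s}\big|_{s=0}X_t(z(s,t))$ equals the directional derivative $\dd X_t(x(t))[Z(x(t))]$, an expression that makes sense for a $C^1$ field $X_t$ and an arbitrary vector $Z(x(t))$, uses no integral curve of $Z$, and reduces to the integral-curve formula when $Z$ is smooth. Altogether this shows that \eqref{eqn:lie_bracket} is meaningful at all regular points of $t\mapsto Z(x(t))$, hence a.e.\ along $x(t)$, and that in a chart
\[
[X_t,Z]_{x(t)}^i \;=\; \frac{\dd}{\dd\tau}\Big|_{t}\, Z^i(x(\tau)) \;-\; \pa_j X_t^i(x(t))\, Z^j(x(t)).
\]

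The main point is that this value does not depend on the chart, and I would verify it by comparison with smooth vector fields. For the second summand: given any smooth $\wh Z$ with $\wh Z(x(t))=Z(x(t))$, the directional derivative $\dd X_t(x(t))[\,\cdot\,]$ sees only the value of the field at $x(t)$, so the second summands for $Z$ and for $\wh Z$ coincide \emph{in every chart}. For the first summand there are two cases. If $\dot x(t)\ne 0$, one can choose $\wh Z$ so that in addition $\dd\wh Z(x(t))[\dot x(t)]=\frac{\dd}{\dd\tau}\big|_t Z(x(\tau))$ (possible precisely because $\dot x(t)\ne 0$); then the first summands for $Z$ and $\wh Z$ also agree in every chart, so the whole right-hand side of \eqref{eqn:lie_bracket} for $Z$ agrees chartwise with the genuine Lie bracket $[X_t,\wh Z]_{x(t)}$, which is an honest element of $\T_{x(t)}M$. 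If $\dot x(t)=0$, a short transformation computation (the product rule applied to $\frac{\pa \phi^a}{\pa x^i}(x(\tau))\,Z^i(x(\tau))$ under a change of chart $\phi$) shows that the first summand transforms as a tangent vector by itself, because its would-be non-tensorial correction is proportional to $\dot x(t)$; together with the transformation of the second summand already pinned down through $\wh Z$, the right-hand side of \eqref{eqn:lie_bracket} is again a well-defined element of $\T_{x(t)}M$. This is where the low regularity of $x(\cdot)$ requires attention: one must arrange the comparison field $\wh Z$ and treat the degenerate case $\dot x(t)=0$ separately, rather than quoting the smooth identity outright.

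Next I would prove measurability and local boundedness of $t\mapsto[X_t,Z]_{x(t)}$ summand by summand. The map $t\mapsto\frac{\dd}{\dd\tau}\big|_t Z(x(\tau))$ is the a.e.\ derivative of an ACB map, hence measurable and bounded on the compact interval $[t_0,t_1]$. In the second summand, $t\mapsto Z^j(x(t))$ is continuous (ACB maps are continuous) and bounded, whereas $t\mapsto\pa_j X_t^i(x(t))$ is the composition of the continuous curve $x(\cdot)$ with the map $(t,x)\mapsto\pa_j X_t^i(x)$, which --- $X_t$ being \nice\ --- is measurable in $t$, continuous in $x$, and locally bounded; such a composition is measurable by the standard argument for Caratheodory maps (approximate $x(\cdot)$ by simple functions and use continuity in $x$) and is bounded because $x([t_0,t_1])$ is compact. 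Since products and differences of measurable, locally bounded functions are measurable and locally bounded, the claim follows.

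Finally, the locality statement is read straight off the rewritten formula: the first summand uses only the restriction of $Z$ to $x(\cdot)$, and the second uses $X_t$ on a neighbourhood of the point $x(t)$ (to form $\dd X_t$) together with $Z(x(t))$; hence $[X_t,Z]_{x(t)}$ is determined by $Z$ along $x(\cdot)$ and by $X_t$ near $x(t)$. I expect essentially all the real work to be in the coordinate-independence step; the measurability of $t\mapsto\pa_j X_t^i(x(t))$ is a minor technical point handled by the composition properties of Caratheodory maps, and the rest is bookkeeping on \eqref{eqn:lie_bracket}.
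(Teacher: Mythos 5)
Your proposal is correct and takes essentially the paper's route: the paper justifies Proposition \ref{prop:bracket_1} by exactly the observation you elaborate, namely that once $x_t(\tau)$ is replaced by $x(\tau)$ in \eqref{eqn:lie_bracket}, the first summand only needs differentiability of $\tau\mapsto Z(x(\tau))$ at $\tau=t$ (automatic a.e.\ for an ACB map) and the second summand is just $\dd X_t(x(t))[Z(x(t))]$, which exists since a \nice\ TDVF is $C^1$ in $x$; the chart-independence check and the Caratheodory-composition argument for measurability, which the paper leaves implicit, are supplied correctly by you. One small point worth making explicit: your comparison with the smooth field $\wh Z$ (and the cancellation of the non-tensorial terms) also uses $\dot x(t)=X_t(x(t))$, which holds a.e.\ along the integral curve, so it only shrinks the exceptional null set and does not affect the statement.
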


In other words, formula \eqref{eqn:lie_bracket} is an extension of Definition \ref{def:bracket}
which allows to calculate the Lie bracket $[X_t,Z]_{x(t)}$ at almost every point of a given integral curve $x(t)$ of $X_t$, for vector fields $Z$ defined only along $x(t)$ and such that $t\mapsto Z(x(t))$ is ACB. The latter generalization is necessary in control theory, since, as $t\mapsto x(t)$ is in general ACB only, even if $Z$ is a smooth vector field, we cannot expect the map $t\mapsto Z(x(t))$ to be of regularity higher than ACB. 

The above construction of the Lie bracket $[X_t,Z]_{x(t)}$ allows to introduce the following natural construction.

\begin{definition}
\label{def:bracket_distrib}
Let $X_t$ be a \nice\ TDVF, $x(t)=x(t;t_0,x_0)$ (with $t\in[t_0,t_1]$) its integral curve and let $\B$ be a distribution along $x(t)$. By $[X_t,\B]$ we shall understand the distribution along $x(t)$ generated by the Lie brackets of $X_t$ and all ACB sections of $\B$:
$$[X_t,\B]_{x(t)}:=\vect\{[X_t,Y]_{x(t)}\ |\ Y\in\Sec_{ACB}(\B)\}\ ,$$ 
where we consider $[X_t,Y]_{x(t)}$ at all points where it makes sense, i.e., at which the bracket $[X_t,Y]_{x(t)}$ is well-defined. 

A \sexy\ distribution $\B$ along $x(t)$ will be called \emph{$X_t$-invariant along $x(t)$}  if
$$[X_t,\B]_{x(t)}\subset\B_{x(t)}\quad\text{for almost every $t\in[t_0,t_1]$.}$$
\end{definition}

Note that neither $[X_t,\B]$ nor $\B+[X_t,\B]$ need be \sexy\ distributions along $x(t)$ even if so was $\B$ as, in general, there is no guarantee that these distributions will be spanned by ACB sections (we can loose regularity when calculating the Lie bracket).   

The following result explains the relation between the $\A{t\tau}$- and $X_t$-invariance of distributions along $x(t)$.  

\begin{theorem}
\label{thm:flow_bracket}
Let $\B$ be a distribution along $x(t)=x(t;t_0,x_0)$ (with $t\in[t_0,t_1]$), an integral curve of a \nice\ TDVF $X_t$, and let $\A{t\tau}$ be the TD flow of $X_t$. The following conditions are equivalent
\begin{enumerate}[(a)] 
	\item \label{cond:a} $\B$ is respected by the TD flow $\A{t\tau}$ of $X_t$ along $x(t)$.
	\item \label{cond:b} $\B$ is a \sexy\ distribution $X_t$-invariant and of constant rank along $x(t)$. 
\end{enumerate} 
\end{theorem}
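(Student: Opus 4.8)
The proof will establish the two implications \eqref{cond:a}$\Rightarrow$\eqref{cond:b} and \eqref{cond:b}$\Rightarrow$\eqref{cond:a} separately, and in each the central computational fact is a formula expressing the $t$-derivative of a transported vector in terms of a Lie bracket. Concretely, for a vector field $Z$ along $x(t)$ with $t\mapsto Z(x(t))$ ACB, I expect to use the identity
$$\frac{\pa}{\pa t}\Big|_{t=t_1}\T\A{t_0 t}\big(Z(x(t))\big)=\T\A{t_0 t_1}\big([X_{t_1},Z]_{x(t_1)}\big)$$
valid at a.e.\ $t_1$ (this is the time-dependent, low-regularity analog of the classical formula $\frac{d}{dt}(\A{}^\ast Z)=\A{}^\ast[X,Z]$; I would appeal to Lemma \ref{lem:bracket_single} referenced in the excerpt, or prove it by the same ``frozen time'' integral-curve argument used to justify \eqref{eqn:lie_bracket}). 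All the qualitative information about ACB/measurability/local boundedness of the maps involved is already packaged in Proposition \ref{prop:bracket_1} and the discussion of Section \ref{sec:technical}.

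\textbf{\eqref{cond:a}$\Rightarrow$\eqref{cond:b}.} Assume $\B_{x(t)}=\T\A{tt_0}(\B_{x(t_0)})$. Constant rank is immediate since each $\A{tt_0}$ is a local diffeomorphism (noted already in Definition \ref{def:distib_at_all}), and $\B$ is \sexy\ by the second bullet of Proposition \ref{prop:sexy_examples}. For $X_t$-invariance, pick a basis $\{X^1,\dots,X^k\}$ of $\B_{x(t_0)}$ and set $Y^i(x(t)):=\T\A{tt_0}(X^i)$; these are ACB sections spanning $\B$. Apply the displayed derivative formula (after rewriting $Y^i(x(t))=\T\A{tt_0}\circ \T\A{t_0 t}(Y^i(x(t)))$ and differentiating, using the cocycle law \eqref{eqn:t_flow}) to conclude that $[X_t,Y^i]_{x(t)}=\frac{\pa}{\pa\tau}\big|_\tau \T\A{\tau t_0}(X^i)\in \T\A{\tau t_0}(\B_{x(t_0)})=\B_{x(\tau)}$ for a.e.\ $t$. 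Since an arbitrary $Y\in\Sec_{ACB}(\B)$ can be written in the transported frame as $Y(x(t))=\sum_i f^i(t)\,Y^i(x(t))$ with $f^i$ ACB (here I use that the $Y^i$ form a frame, so the coefficients are as regular as $Y$), the Leibniz rule gives $[X_t,Y]_{x(t)}=\sum_i f^i(t)[X_t,Y^i]_{x(t)} + \sum_i \dot f^i(t) Y^i(x(t))\in\B_{x(t)}$ a.e., which is exactly Definition \ref{def:bracket_distrib}.

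\textbf{\eqref{cond:b}$\Rightarrow$\eqref{cond:a}.} Let $r=\rank\B$ be constant and choose ACB sections $Y^1,\dots,Y^r$ of $\B$ which are pointwise independent on all of $[t_0,t_1]$ (shrinking the interval if necessary, or patching; pointwise independence on a closed interval of an ACB frame is an open condition and can be arranged on a covering, which suffices since the final equality is local in $t$). Define $Z^i(x(t)):=\T\A{t_0 t}(Y^i(x(t)))\in\T_{x(t_0)}M$. By the derivative formula and $X_t$-invariance, $\frac{\pa}{\pa t}Z^i(x(t))=\T\A{t_0 t}([X_t,Y^i]_{x(t)})\in \T\A{t_0 t}(\B_{x(t)})=\vect\{Z^1(x(t)),\dots,Z^r(x(t))\}$ for a.e.\ $t$; writing $[X_t,Y^i]_{x(t)}=\sum_j a^i_j(t) Y^j(x(t))$ with $a^i_j$ measurable and locally bounded (by Cramer's rule applied to the ACB frame $\{Y^j\}$, using Proposition \ref{prop:bracket_1}), we get the linear ACB-coefficient ODE $\dot Z^i = \sum_j a^i_j Z^j$ in the fixed vector space $\T_{x(t_0)}M$. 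The matrix solution $\Phi(t)$ with $\Phi(t_0)=\id$ (which exists and is invertible by Caratheodory theory, Theorem \ref{thm:solutions_ODE}) satisfies $Z^i(x(t))=\sum_j\Phi^i_j(t)Z^j(x(t_0)) = \sum_j\Phi^i_j(t)Y^j(x(t_0))$, so $\vect\{Z^i(x(t))\}=\vect\{Y^j(x(t_0))\}=\B_{x(t_0)}$ for all $t$ (invertibility of $\Phi$ is what makes the span constant, not merely non-increasing). Applying $\T\A{t t_0}$ and using that $\{Z^i(x(t))\}$ spans $\T\A{t_0 t}(\B_{x(t)})$ yields $\B_{x(t)}=\T\A{tt_0}(\B_{x(t_0)})$, i.e.\ \eqref{cond:a}.

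\textbf{Main obstacle.} The conceptual content is routine once the transport--bracket derivative formula is in hand; the genuine care goes into the regularity bookkeeping in the non-smooth setting. The crux is that $t\mapsto[X_t,Y^i]_{x(t)}$ is only defined a.e.\ and is merely measurable and locally bounded (Proposition \ref{prop:bracket_1}), so the ODE $\dot Z^i=\sum_j a^i_j Z^j$ must be handled in the Caratheodory sense, and one must check that the coefficients $a^i_j$ are legitimately measurable/bounded — this is where pointwise independence of the ACB frame $\{Y^j\}$ on a (locally) closed time-interval, and hence Cramer's rule with a uniformly bounded-below determinant, is essential. A secondary subtlety is that ``\sexy'' only guarantees a \emph{finite} spanning set of ACB sections, not a global ACB frame; extracting an honest frame of the correct constant rank $r$ on small time-subintervals (and noting the target identity is local in $t$) is the step that uses the constant-rank hypothesis in \eqref{cond:b} in an essential way.
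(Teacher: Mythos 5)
Your proposal is correct and is essentially the paper's proof in a slightly different packaging: both rest on the relation between the transport $\T\A{t\tau}$ and the bracket $[X_t,\cdot]$ (the paper's Lemma \ref{lem:bracket_single}) plus Caratheodory theory for linear ODEs with measurable, locally bounded coefficients (Theorem \ref{thm:solutions_ODE}), and the (a)$\Rightarrow$(b) direction (transported frame, ACB transition coefficients, Leibniz rule) is the same argument. The mild differences: you take as your engine the identity $\frac{\pa}{\pa t}\T\A{t_0t}\big(Z(x(t))\big)=\T\A{t_0t}\big([X_t,Z]_{x(t)}\big)$, which is strictly stronger than Lemma \ref{lem:bracket_single} as stated, so it needs its own short proof -- it follows from the coordinate formula \eqref{eqn:lie_bracket} combined with the variational equation \eqref{eqn:tangent_ODE}, i.e.\ exactly the computation in the paper's proof of that lemma; and in \eqref{cond:b}$\Rightarrow$\eqref{cond:a} you pull the frame back to the fixed space $\T_{x(t_0)}M$ and use the invertible fundamental matrix, whereas the paper stays on the curve, solves the matrix ODE $X_t(\psi_i^{\ j})=-\sum_s\psi_i^{\ s}\phi_s^{\ j}$ to produce a frame commuting with $X_t$, and then invokes the converse half of Lemma \ref{lem:bracket_single}; these are the same linear Caratheodory ODE viewed from two sides, yours shortcutting the appeal to the converse of the lemma. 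Two small repairs to make your write-up match in rigor: in \eqref{cond:a}$\Rightarrow$\eqref{cond:b} the displayed intermediate identity is off -- for a transported field $Y^i(x(t))=\T\A{tt_0}(X^i)$ your own derivative formula gives $[X_t,Y^i]_{x(t)}=0$ a.e.\ (the bracket is not the $\tau$-derivative of $\T\A{\tau t_0}(X^i)$), which is all you need and is precisely Lemma \ref{lem:bracket_single}; and the regularity of the coefficients $f^i$, resp.\ $a^i_j$, cannot come from Cramer's rule applied to the $r$ sections alone (they are not a square system in $\T_{x(t)}M$) -- as the paper does, complete the transported frame by vectors $\T\A{tt_0}(W_{j0})$ to a frame of $\T M$ along $x(t)$ and use that the inverse of a nondegenerate ACB transition matrix is again ACB.
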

The proof is given in Appendix \ref{sec:appendix}. Note that the equivalence between $X_t$- and $\A{t\tau}$-invariance is valid only if the considered distribution $\B$ along $x(t)$ satisfies regularity conditions: it has to be \sexy\ and of constant rank along $x(t)$. 

Given a \sexy\ distribution $\B$ along $x(t)$, it is clear in the light of the above result,  that $\A{\bullet}(\B)_{x(t)}$, the smallest distribution $\A{t\tau}$-invariant along $x(t)$ and containing $\B$, should be closed under the operation $[X_t,\cdot]$. Thus, in the smooth case, it is natural to try to construct $\A{\bullet}(\B)$ in the following way. 

\begin{lemma}
\label{cor:algorithm}
Let $X$ be a $C^\infty$-smooth vector field and let $\B$ a $C^\infty$-smooth distribution on $M$. Assume that along an integral curve $x(t)=x(t;t_0,x_0)$ of $X$ (with $t\in[t_0,t_1]$), the distribution spanned by the iterated Lie brackets of $X$ with all possible $\B$-valued vector fields, i.e.,
\begin{equation}
\label{eqn:Y_B}
\ad^\infty_X(\B)_{x(t)}:=\<\ad_X^k(Z)_{x(t)}\ |\ Z\in\Sec(\B),\quad k=0,1,2,\hdots>
\end{equation}
is of constant rank along $x(t)$. Then $\ad^\infty_X(\B)_{x(t)}$ is the smallest distribution along $x(t)$ containing $\B_{x(t)}$ and respected by $\A{t}$, the flow of $X$, i.e., $\ad^\infty_X(\B)_{x(t)}=\A{\bullet}(\B)_{x(t)}$.  
\end{lemma}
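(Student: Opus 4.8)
The plan is to read Lemma~\ref{cor:algorithm} through Proposition~\ref{prop:AB}, which identifies $\A{\bullet}(\B)_{x(t)}$ as the smallest distribution along $x(t)$ that contains $\B$ and is $\A{t\tau}$-invariant along $x(t)$. It therefore suffices to prove two inclusions. The inclusion $\ad^\infty_X(\B)_{x(t)}\subseteq\A{\bullet}(\B)_{x(t)}$ I would get by induction on $k$ in \eqref{eqn:Y_B}: for $k=0$ it is just $\B_{x(t)}\subseteq\A{\bullet}(\B)_{x(t)}$, and for the step note that $\ad_X^k(Z)$ is a smooth vector field, so its restriction to the (smooth, since $X$ is smooth) curve $x(t)$ is an ACB section of $\A{\bullet}(\B)$ by the inductive hypothesis; since $\A{\bullet}(\B)$ is $\A{t\tau}$-invariant it is, by Theorem~\ref{thm:flow_bracket}, also $X$-invariant along $x(t)$, whence $\ad_X^{k+1}(Z)_{x(t)}=[X,\ad_X^k(Z)]_{x(t)}\in\A{\bullet}(\B)_{x(t)}$ for a.e.\ $t$; as $t\mapsto\ad_X^{k+1}(Z)_{x(t)}$ is continuous and $\A{\bullet}(\B)$ is a continuous constant-rank distribution, this in fact holds for every $t$.

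For the reverse inclusion I would show that $\ad^\infty_X(\B)$ is itself $\A{t\tau}$-invariant along $x(t)$; containing $\B$, it then dominates the minimal such distribution $\A{\bullet}(\B)$, giving $\A{\bullet}(\B)_{x(t)}\subseteq\ad^\infty_X(\B)_{x(t)}$ and hence equality. By Theorem~\ref{thm:flow_bracket} this reduces to checking that $\ad^\infty_X(\B)$ is \sexy, $X$-invariant and of constant rank along $x(t)$. Constant rank is the standing hypothesis of the lemma. For the \sexy\ property, the distribution is pointwise spanned by the restrictions to $x(t)$ of the smooth — hence ACB — fields $\ad_X^k(Z)$; using the constant-rank hypothesis together with compactness of $[t_0,t_1]$ one extracts a finite spanning subfamily (cover $[t_0,t_1]$ by finitely many subintervals on each of which $r:=\rank\ad^\infty_X(\B)$ suitably chosen fields $\ad_X^{k}(Z)$ remain linearly independent, hence span, and take the union of the finitely many finite sets so obtained).

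The remaining point, which I expect to be the only genuinely technical one, is the $X$-invariance: $[X,Y]_{x(t)}\in\ad^\infty_X(\B)_{x(t)}$ for a.e.\ $t$ and every $Y\in\Sec_{ACB}(\ad^\infty_X(\B))$. I would localize: near any $t^\ast$, constant rank lets us write $Y(x(t))=\sum_{i=1}^r c_i(t)W_i(x(t))$ with a smooth frame $W_1,\dots,W_r$ chosen among the $\ad_X^{k}(Z)$ and ACB coefficients $c_i$ (obtained by solving this linear system, the $W_i$ being smooth). A Leibniz rule for the bracket of Proposition~\ref{prop:bracket_1} — which follows from the computational formula \eqref{eqn:lie_bracket} and the product rule, the autonomy of $X$ giving $x_t(\tau)=x(\tau)$ — then yields $[X,Y]_{x(t)}=\sum_i\dot c_i(t)W_i(x(t))+\sum_i c_i(t)[X,W_i]_{x(t)}$ for a.e.\ $t$. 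The first sum lies in $\ad^\infty_X(\B)_{x(t)}$ by \eqref{eqn:Y_B}; in the second sum each $[X,W_i]_{x(t)}=[X,\ad_X^{k_i}(Z_i)]_{x(t)}=\ad_X^{k_i+1}(Z_i)_{x(t)}$ lies there too. Hence so does $[X,Y]_{x(t)}$, completing the verification.

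Two caveats worth flagging. First, apart from this Leibniz rule for $[X,\cdot]_{x(t)}$ with merely ACB coefficients, the only nonroutine steps are the recurring ``a.e.\ implies everywhere'' passages, which are legitimate because every distribution involved is of constant rank and varies continuously; everything else follows from Theorem~\ref{thm:flow_bracket}, Proposition~\ref{prop:AB} and the definition \eqref{eqn:Y_B}. Second, the constant-rank hypothesis is essential: without it $\ad^\infty_X(\B)$ need not be \sexy, the local frames $W_1,\dots,W_r$ need not exist, and although one always has $\ad^\infty_X(\B)_{x(t)}\subseteq\A{\bullet}(\B)_{x(t)}$, this inclusion can be strict. (Alternatively, one can bypass Theorem~\ref{thm:flow_bracket} and argue directly that $t\mapsto\T\A{t_0t}(\ad^\infty_X(\B)_{x(t)})$ is a smooth constant-rank curve of subspaces of $\T_{x_0}M$ whose derivative lies in itself — the $C^\infty(M)$-module generated by the $\ad_X^k(Z)$ being closed under $[X,\cdot]$ — and such a curve of subspaces is necessarily constant, which is precisely $\A{t\tau}$-invariance of $\ad^\infty_X(\B)$.)
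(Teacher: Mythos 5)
Your proposal is correct and follows essentially the same route as the paper: both inclusions are obtained from the minimality characterization of $\A{\bullet}(\B)$ (Proposition \ref{prop:AB}) together with the equivalence of flow-invariance and $X$-invariance from Theorem \ref{thm:flow_bracket}, using that $\ad^\infty_X(\B)$ is charming, closed under $[X,\cdot]$ and of constant rank. The paper's proof is terser — it asserts closure under $\ad_X$ and the charming property "by construction" — whereas you spell out the ACB-frame/Leibniz verification and the a.e.-to-everywhere continuity step, which are exactly the details implicit in the paper's appeal to Theorem \ref{thm:flow_bracket}.
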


\begin{proof}
The justification of the above result is quite simple. By construction, $\ad^\infty_X(\B)_{x(t)}$ is the smallest distribution along $x(t)$ containing $\B_{x(t)}$ and closed under the operation $\ad_X=[X,\cdot]$. It is clear that $\ad^\infty_X(\B)$ is spanned by a finite number of smooth vector fields of the form $\ad_X^k(Z)$, where $Z\in\Sec(\B)$, and thus it is \sexy. Since it is also of constant rank along $x(t)$ we can use Theorem \ref{thm:flow_bracket} (for a time-independent vector field $X$) to prove that $\ad^\infty_X(\B)_{x(t)}$ is invariant along $x(t)$ under the flow $\A{t}$. We conclude that $\A{\bullet}(\B)_{x(t)}\subset \ad^\infty_X(\B)_{x(t)}$. On the other hand, since $\A{\bullet}(\B)_{x(t)}$ is $\A{t}$-invariant along $x(t)$, again by Theorem \ref{thm:flow_bracket}, it must be closed with respect to the operation $[X,\cdot]$. In particular, it must contain the smallest distribution along $x(t)$ containing $\B_{x(t)}$ and closed under the operation $[X,\cdot]$. Thus $\A{\bullet}(\B)_{x(t)}\supset \ad^\infty_X(\B)_{x(t)}$. This ends the proof. 
\end{proof} 

\begin{remark} 
Let us remark that the construction provided by \eqref{eqn:Y_B} would be, in general, not possible in all non-smooth cases. The basic reason is that the Lie bracket defined by \eqref{eqn:lie_bracket} is of regularity lower than the initial vector fields, i.e., $[X_t,Z]$ may not be ACB along $x(t)$ even if so were $X_t$ and $Z$. Thus by adding the iterated Lie brackets to the initial distribution $\B$, we may loose the property that it is \sexy\ (cf. also a remark following Definition \ref{def:bracket_distrib}) which is essential for Theorem \ref{thm:flow_bracket} to hold. 

Also the constant rank condition is important, as otherwise the correspondence between $X_t$- and $\A{t\tau}$-invariance provided by Theorem \ref{thm:flow_bracket} does not hold. If \eqref{eqn:Y_B} is not of constant rank along $x(t)$ we may only say that $\ad^\infty_X(\B)_{x(t)}\subset \A{\bullet}(\B)_{x(t)}$ (see also Remark \ref{rem:zhit_wrong}). 

It is worth noticing that this situation resembles the well-known results of Sussmann \cite{Sussmann_1973} concerning the integrability of distributions: being closed under the Lie bracket is not sufficient for integrability, as the invariance with respect to the flows of distribution-valued vector fields is also needed. After adding an extra assumption that the rank of the distribution is constant, the latter condition can be relaxed. 
\end{remark} 

By the results of Proposition \ref{prop:bracket_1}, the property that a distribution $\B$ is $X_t$-invariant along $x(t)$ depends not only on $\B$ and the values of a \nice\ TDVF $X_t$ along $x(t)$, but also on the values of $X_t$ in a neighborhood of that integral curve. It turns out, however, that in a class of natural situations the knowledge of $X_t$ along $x(t)$ suffices for checking the $X_t$-invariance.

\begin{lemma}
\label{lem:along_x}
Let $\D$ be a smooth distribution of constant rank on $M$, $X_t$ a \nice\ $\D$-valued TDVF and 
$x(t)=x(t;t_0,x_0)$ (with $t\in[t_0,t_1]$) an integral curve of $X_t$. Let $\B$ be a \sexy\ distribution along $x(t)$, such that $\D_{x(t)}\subset\B_{x(t)}$ for every $t$. Then the property of $\B$ being $X_t$-invariant along $x(t)$ depends only on the values of $X_t$ along $x(t)$.  
\end{lemma}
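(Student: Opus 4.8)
The plan is to reduce the claim to the fact that the Lie bracket $[X_t, Z]_{x(t)}$ computed along the curve depends only on the values of $Z$ along $x(t)$ together with the values of $X_t$ in a neighborhood of $x(t)$ (Proposition \ref{prop:bracket_1}), and then to show that, under the stated hypotheses, one can always replace $X_t$ by any other \nice\ $\D$-valued TDVF agreeing with $X_t$ along $x(t)$ without changing whether $[X_t,\B]_{x(t)}\subset\B_{x(t)}$ holds a.e. Concretely, suppose $\wtilde X_t$ is another \nice\ $\D$-valued TDVF with $\wtilde X_t(x(t))=X_t(x(t))$ for all $t$; note that $x(t)$ is then also an integral curve of $\wtilde X_t$. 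I must show $[X_t,\B]_{x(t)}\subset\B_{x(t)}$ a.e.\ if and only if $[\wtilde X_t,\B]_{x(t)}\subset\B_{x(t)}$ a.e. By symmetry it suffices to prove one implication, and by bilinearity it suffices to compare $[X_t,Y]_{x(t)}$ and $[\wtilde X_t,Y]_{x(t)}$ for a single ACB section $Y$ of $\B$.

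The key computation is that the difference $[X_t - \wtilde X_t, Y]_{x(t)}$ takes values in $\D_{x(t)} \subseteq \B_{x(t)}$ for a.e.\ $t$. Here $W_t := X_t - \wtilde X_t$ is a \nice\ TDVF which vanishes along $x(t)$, and $\wtilde X_t$ is $\D$-valued, so $x(t)$ is an integral curve of $\wtilde X_t$; I can therefore use formula \eqref{eqn:lie_bracket} with $\wtilde X_t$ playing the role of the TDVF, writing
$$[X_t, Y]_{x(t)} = [\wtilde X_t, Y]_{x(t)} + [W_t, Y]_{x(t)}$$
at a.e.\ $t$. The first I would handle using the standard local-coordinate expression for the Lie bracket: in a chart, $[W_t, Y]_{x(t)}^i = \sum_j W_t^j \partial_j Y^i - \sum_j Y^j \partial_j W_t^i$ evaluated at $x(t)$ (interpreting the derivatives of $Y$ along the curve appropriately, as in \eqref{eqn:lie_bracket}). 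Since $W_t(x(t)) = 0$ the first sum vanishes. The second sum, $-\sum_j Y^j(x(t))\,\partial_j W_t^i(x(t))$, is the derivative of $W_t$ in the direction $Y(x(t))$; because $W_t = X_t - \wtilde X_t$ is $\D$-valued, its derivative in any direction tangent to $M$ is itself $\D$-valued modulo terms controlled by the constant rank of $\D$ — more carefully, choosing a local frame $\{e_1,\dots,e_r\}$ of $\D$ and writing $W_t = \sum_a f_t^a e_a$ with $f_t^a(x(t)) = 0$, one gets $\partial_j W_t = \sum_a (\partial_j f_t^a) e_a + \sum_a f_t^a \partial_j e_a$, and the second group vanishes at $x(t)$ since $f_t^a(x(t))=0$, leaving $\partial_j W_t(x(t)) \in \D_{x(t)}$. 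Hence $[W_t,Y]_{x(t)} \in \D_{x(t)} \subseteq \B_{x(t)}$ for a.e.\ $t$ (namely at all regular points of $t\mapsto Y(x(t))$, which is where the bracket is defined).

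Given this, the equivalence is immediate: if $[\wtilde X_t, \B]_{x(t)} \subseteq \B_{x(t)}$ a.e., then for every ACB section $Y$ of $\B$ we have $[X_t,Y]_{x(t)} = [\wtilde X_t,Y]_{x(t)} + [W_t,Y]_{x(t)} \in \B_{x(t)} + \D_{x(t)} = \B_{x(t)}$ a.e.; since such $Y$ span $\B_{x(t)}$ pointwise (because $\B$ is \sexy) and the bracket operation $[X_t,\cdot]_{x(t)}$ is $\R$-bilinear on sections, this gives $[X_t,\B]_{x(t)} \subseteq \B_{x(t)}$ a.e. The reverse implication follows by swapping the roles of $X_t$ and $\wtilde X_t$, which is legitimate because both are \nice\ $\D$-valued and share $x(t)$ as an integral curve. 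This proves that the property is insensitive to the choice of $\D$-valued extension, i.e., depends only on $X_t|_{x(t)}$.

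The main obstacle I anticipate is the bookkeeping in the middle step: one must check that the frame-dependent computation of $\partial_j W_t(x(t)) \in \D_{x(t)}$ is coordinate/frame independent (it is, since $\D$ has constant rank and is smooth, so admits local frames, and the conclusion "lands in $\D$" is intrinsic), and one must be careful that formula \eqref{eqn:lie_bracket} genuinely applies with $\wtilde X_t$ in place of $X_t$ — this is fine precisely because $x(t)$ is also an integral curve of $\wtilde X_t$ and the right-hand side of \eqref{eqn:lie_bracket} only uses $\wtilde X_t$ near $x(t)$ and $Y$ along $x(t)$. There is also a minor subtlety about "for a.e.\ $t$": the brackets $[X_t,Y]_{x(t)}$, $[\wtilde X_t, Y]_{x(t)}$ and $[W_t,Y]_{x(t)}$ are all defined at the regular points of $t\mapsto Y(x(t))$, which form a full-measure set, so the additive decomposition and the inclusions all hold on that common full-measure set. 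Everything else is routine given Proposition \ref{prop:bracket_1}.
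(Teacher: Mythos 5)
Your proof is correct and follows essentially the same route as the paper's: compare $X_t$ with any other \nice\ $\D$-valued extension agreeing along $x(t)$, expand the difference in a smooth local frame of $\D$ with coefficients vanishing on the curve, and observe that upon differentiation only the $\D$-valued term survives, so the two brackets agree modulo $\D_{x(t)}\subset\B_{x(t)}$. The only cosmetic difference is that you package this as an intermediate bracket $[W_t,Y]_{x(t)}$ (which, strictly, is not covered by Definition \ref{def:bracket} and formula \eqref{eqn:lie_bracket} since $x(t)$ is not an integral curve of $W_t$, though your interpretation is harmless because only the derivative of $W_t$ term survives), whereas the paper simply subtracts the two brackets so that the derivative-of-$Y$ terms cancel outright -- the content is identical.
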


The proof is given in Appendix \ref{sec:appendix}.

\section{The basics of contact geometry}
\label{sec:contact}

\paragraph{Contact manifolds and contact transformations.}
In this section we shall recall basic facts from contact geometry. A contact structure on a manifold $\M$ is a smooth co-rank one distribution $\C\subset \T\M$ satisfying certain maximum non-degeneracy condition. In order to formalize that condition we introduce the following geometric construction. From now on we shall assume that the pair $(\M,\C)$ consists of a smooth manifold $\M$ and a smooth co-rank one distribution $\C$ on $\M$. Sometimes it will be convenient to treat $\C$ as a vector subbundle of $\T\M$.

Given $(\M,\C)$ one can define the\emph{ bundle normal to $\C$} in $\T\M$ as the quotient 
$$\N\C:=\T\M/\C\ .$$
Note that $\N\C$ has a natural structure of a line bundle (since $\C$ is of co-rank one) over $\M$. We shall denote this bundle by $\tau:\N\C\ra \M$. 

Let now $X$ and $Y$ be two $\C$-valued vector fields on $\M$. It is easy to check that the class of their Lie bracket $[X,Y]$ in $\N\C$ is tensorial with respect to both $X$ and $Y$. That is,
for any pair of smooth functions $\phi,\psi\in C^\infty(\M)$ 
$$[\phi\cdot X,\psi\cdot Y]\equiv \phi\psi\cdot[X,Y]\mod\C\ .$$ 
It follows that the assignment 
$$(X,Y)\longmapsto\beta(X,Y):=[X,Y]\mod \C \ ,$$
defines a $\N\C$-valued 2-form $\beta:\Lambda^2\C\ra \N\C\ .$ Now we are ready to state the following

\begin{definition}
\label{def:cont_mfd}
A pair $(\M,\C)$ consisting of a smooth manifold $\M$ and a smooth co-rank one distribution $\C\subset\T\M$ is called a \emph{contact manifold} if the associated $\N\C$-valued 2-form $\beta$ is non-degenerate, i.e., if $\beta(X,\cdot)\equiv 0$ implies $X\equiv0$. 

Sometimes we call $\C$ a \emph{contact structure} or a \emph{contact distribution} on $\M$.
\end{definition}

Observe that $\C$ is necessarily of even-rank ($\M$ is odd-dimensional). This follows from a simple fact from linear algebra that every skew-symmetric 2-form on an odd-dimensional space has a non-trivial kernel. 

\begin{definition}
\label{def:cvf}
Let $(\M,\C)$ be a contact manifold. A diffeomorphism $F:\M\ra\M$ which preserves the contact distribution, i.e., $\T F(\C_p)=\C_{F(p)}$ for every $p\in \M$, where $\T F$ stands for the tangent map of $F$, is called a \emph{contact transformation}.
By a \emph{contact vector field} (\emph{CVF} in short) on $\M$ (or an \emph{infinitesimal symmetry} of $(\M,\C)$) we shall understand a smooth vector field $X\in\X(\M)$ preserving the contact distribution $\C$, i.e., 
$$[X,\C]\subset\C\ .$$
Equivalently, $X$ is a CVF if and only if its (local) flow $\A{t}$ consists of contact transformations (cf. Theorem~\ref{thm:flow_bracket}).  
\end{definition}

It is worth mentioning that the above relation between contact vector fields and flows consisting of contact transformations can be generalized to the context of TDVF's and TD flows (cf. Section \ref{sec:technical}). We will need this generalized relation in Section \ref{sec:pmp} after introducing control systems. 

\begin{proposition}
\label{prop:contact_evolution}
Let $X_t$ be a \nice\ TDVF on a contact manifold $(\M,\C)$ and let $\A{t\tau}$ be the TD flow of $X_t$. Then $X_t$ is a contact vector field for every $t\in\R$ (i.e., $[X_t,\C]\subset\C$) if and only if the  TD flow $\A{t\tau}$ consists of contact transformations.  
\end{proposition}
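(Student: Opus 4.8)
The plan is to reduce the time-dependent statement to the autonomous criterion of Definition \ref{def:cvf} by working pointwise in time, exploiting the composition law \eqref{eqn:t_flow} together with the relation between the Lie bracket along an integral curve (formula \eqref{eqn:lie_bracket}) and the transport of a distribution by the TD flow, i.e. Theorem \ref{thm:flow_bracket}. The key observation is that the condition ``$\A{t\tau}$ consists of contact transformations'' means precisely that the contact distribution $\C$, restricted along any integral curve $x(t)$ of $X_t$, is $\A{t\tau}$-invariant along $x(t)$ in the sense of Definition \ref{def:distib_at_all}; and $\C$ restricted to any ACB curve is automatically \sexy\ and of constant rank (it is the restriction of a smooth, locally finitely generated, constant-rank distribution on $\M$, so Proposition \ref{prop:sexy_examples} applies). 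Hence Theorem \ref{thm:flow_bracket} tells us that this invariance is equivalent to $\C$ being $X_t$-invariant along $x(t)$, which by Definition \ref{def:bracket_distrib} means $[X_t,\C]_{x(t)}\subset\C_{x(t)}$ for almost every $t$.

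First I would treat the implication ($\Leftarrow$): suppose every $\A{t\tau}$ is a contact transformation. Fix any point $p\in\M$ and any time $t_0$, let $x(t)=x(t;t_0,p)$ be the integral curve of $X_t$ through $p$, and observe that $\T\A{t t_0}(\C_p)=\C_{x(t)}$ for all $t$; thus $\C$ is $\A{t\tau}$-invariant along $x(t)$. By the argument of the previous paragraph and Theorem \ref{thm:flow_bracket}, $[X_t,\C]_{x(t)}\subset\C_{x(t)}$ for a.e.\ $t$, and in particular for $t=t_0$ (or, more carefully, for a.e.\ $t_0$ in a neighborhood, which suffices after we vary $p$ and $t_0$). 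Since $[X_{t_0},\cdot]$ evaluated at a point depends only on germs at that point (Definition \ref{def:bracket} and Proposition \ref{prop:bracket_1}), and a smooth section of $\C$ near $p$ can be realized as an ACB section of $\C$ along $x(t)$, we conclude $[X_{t_0},\C]\subset\C$ at $p$. As $p$ is arbitrary and $X_{t_0}$ is smooth, $X_{t_0}$ is a CVF. To handle the ``a.e.\ $t_0$'' gap cleanly, I would instead note that the map $t\mapsto [X_t,\C]_{x(t)}$ modulo $\C_{x(t)}$ is, for smooth test sections, a continuous function of $t$ (since $X_t$ is \nice, hence $C^1$ in $t$ by assumption), so an a.e.\ vanishing statement upgrades to vanishing for every $t$.

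For the converse ($\Rightarrow$), assume $[X_t,\C]\subset\C$ for every $t$. Fix $t_0$ and $p$, let $x(t)=x(t;t_0,p)$. Restricting $\C$ to $x(t)$ gives a \sexy, constant-rank distribution that is $X_t$-invariant along $x(t)$ (the inclusion $[X_t,\C]_{x(t)}\subset\C_{x(t)}$ holds for all $t$ by hypothesis). Theorem \ref{thm:flow_bracket} then yields that $\C$ is respected by the TD flow along $x(t)$, i.e. $\T\A{t t_0}(\C_p)=\C_{x(t)}$ for all $t\in[t_0,t_1]$. Since $x(t)=\A{t t_0}(p)$, this says exactly $\T\A{t t_0}(\C_p)=\C_{\A{t t_0}(p)}$; as $p$ and $t_0$ were arbitrary and every local diffeomorphism $\A{t\tau}$ factors through such flow maps by \eqref{eqn:t_flow}, we conclude that each $\A{t\tau}$ is a contact transformation.

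The main obstacle I anticipate is the bookkeeping around ``almost every $t$'': Theorem \ref{thm:flow_bracket} naturally produces $X_t$-invariance only a.e., whereas Definition \ref{def:cvf} of a CVF demands $[X_t,\C]\subset\C$ for \emph{every} $t$. Bridging this gap requires using the regularity built into the notion of a \nice\ TDVF (continuity, indeed $C^1$-dependence, of $X_t$ on $t$), so that the a.e.\ inclusion, being a closed condition on a continuous function, holds everywhere. A secondary technical point is the passage between ``sections of $\C$ along $x(t)$'' and ``germs of smooth sections of $\C$ on $\M$'', but this is routine given Proposition \ref{prop:bracket_1} (the bracket along $x(t)$ is determined by $Z$ along $x(t)$ and $X_t$ near $x(t)$) together with the fact that every $v\in\C_p$ extends to a smooth $\C$-valued field near $p$.
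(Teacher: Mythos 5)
Your proposal follows essentially the same route as the paper: the paper's entire proof consists of applying Theorem \ref{thm:flow_bracket} with $\B=\C$ along integral curves, noting that $\C$ restricted to such a curve is \sexy\ (Proposition \ref{prop:sexy_examples}) and of constant rank, exactly as you do in both implications.

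One caveat about the extra step you add. Your bridge from ``$[X_t,\C]_{x(t)}\subset\C_{x(t)}$ for a.e.\ $t$'' to ``for every $t$'' rests on the claim that a \nice\ TDVF is $C^1$ (or at least continuous) in $t$; by Definition \ref{def:nice_map} a \nice\ TDVF is only \emph{measurable} in $t$, so that continuity argument is not available, and indeed one can alter $X_t$ on a null set of times without changing the flow $\A{t\tau}$ at all, destroying the contact property at those times. So the ($\Leftarrow$) implication, read literally with ``for every $t\in\R$'', cannot be recovered this way; what Theorem \ref{thm:flow_bracket} (via Definition \ref{def:bracket_distrib}) actually gives is the a.e.\ statement. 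Note, however, that the paper's own one-line proof does not address this point either, so your write-up is not weaker than the paper's argument --- you correctly isolated the a.e.\ subtlety, but the proposed fix needs either the a.e.\ reading of the statement or an additional continuity-in-$t$ hypothesis.
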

The proof follows directly from Theorem \ref{thm:flow_bracket} by taking $\B=\C$ (which is \sexy\ -- cf. Proposition \ref{prop:sexy_examples}).

\paragraph{Characterization of CVF's.}
It turns out that there is a one-to-one correspondence between CVF's on $\M$ and sections of the normal bundle $\N\C$. 

\begin{lemma}
\label{lem:cvf}
Let $X\in\X(\M)$ be any representative of the class $[X]\in\Sec(\N\C)$. By $h(X)\in\Sec(\C)$ we shall denote the unique $\C$-valued vector field satisfying 
$$[h(X), Y]=[Y,X] \mod \C$$
for every $Y\in \Sec(\C)$. 
The assignment $[X]\mapsto \sym{[X]}:=X+h(X)$ is well-defined and establishes a one-to-one correspondence between 
sections of $\N\C$ and CVF's on $\M$. 
\end{lemma}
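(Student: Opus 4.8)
The statement has three parts: (i) for a fixed representative $X$ of a class $[X]\in\Sec(\N\C)$, the auxiliary field $h(X)$ exists and is unique; (ii) the construction $[X]\mapsto X+h(X)$ descends to classes, i.e.\ does not depend on the chosen representative; (iii) the resulting map from $\Sec(\N\C)$ to CVF's is a bijection. I would treat these in order.

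\textbf{Step (i): existence and uniqueness of $h(X)$.} Fix $X$. I want to show there is exactly one $Z\in\Sec(\C)$ with $[Z,Y]\equiv[Y,X]\pmod\C$ for all $Y\in\Sec(\C)$. The right-hand side, as a function of $Y\in\Sec(\C)$, defines a map $\Sec(\C)\to\Sec(\N\C)$; I would first check it is $C^\infty(\M)$-linear in $Y$ modulo $\C$ — exactly the same tensoriality computation used earlier to define $\beta$: $[\psi Y, X] = \psi[Y,X] - (X\psi)Y \equiv \psi[Y,X]\pmod\C$ since $Y\in\C$. Hence $Y\mapsto[Y,X]\bmod\C$ is a bundle map $\C\to\N\C$, i.e.\ a section of $\C^\ast\otimes\N\C$. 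On the other hand, $Z\mapsto\big(Y\mapsto[Z,Y]\bmod\C\big)=\beta(Z,\cdot)$ is exactly the map $\C\to\C^\ast\otimes\N\C$ induced by the contact $2$-form $\beta$. By Definition~\ref{def:cont_mfd}, $\beta$ is non-degenerate, so this map is a fiberwise isomorphism $\C\xrightarrow{\ \sim\ }\C^\ast\otimes\N\C$ (both bundles having the same rank since $\N\C$ is a line bundle). Therefore there is a unique section $Z=h(X)$ of $\C$ mapping to the prescribed element $Y\mapsto[Y,X]\bmod\C$. Smoothness of $h(X)$ is automatic since it is the image of a smooth section under a smooth bundle isomorphism.

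\textbf{Step (ii): well-definedness on classes.} Two representatives of $[X]$ differ by a $\C$-valued field: $X' = X + W$ with $W\in\Sec(\C)$. I must show $X' + h(X') = X + h(X)$, equivalently $h(X') = h(X) - W$. Since $h$ is determined by the defining relation, it suffices to check that $h(X)-W$ satisfies the relation with $X'$ in place of $X$: for every $Y\in\Sec(\C)$,
\[
[\,h(X)-W,\ Y\,]\ \equiv\ [h(X),Y] - [W,Y]\ \equiv\ [Y,X] - [W,Y]\ \pmod\C .
\]
On the other hand $[Y,X'] = [Y,X] + [Y,W] = [Y,X] - [W,Y]\pmod\C$. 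The two expressions agree, so by the uniqueness in Step (i), $h(X') = h(X) - W$, and hence $\sym{[X]}$ is independent of the representative.

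\textbf{Step (iii): the map $[X]\mapsto\sym{[X]}$ is a bijection onto CVF's.} First, $\sym{[X]}$ is a CVF: for $Y\in\Sec(\C)$,
\[
[\,\sym{[X]},\,Y\,]\ =\ [X,Y] + [h(X),Y]\ \equiv\ [X,Y] + [Y,X]\ \equiv\ 0\ \pmod\C,
\]
so $[\sym{[X]},\C]\subset\C$. For surjectivity: given any CVF $V$, let $[X]:=V\bmod\C\in\Sec(\N\C)$ and $W:=V-X\in\Sec(\C)$ (for the chosen representative $X$). I claim $h(X)=W$, which forces $\sym{[X]}=X+W=V$. Indeed, for every $Y\in\Sec(\C)$, since $V$ is a CVF, $[V,Y]\in\Sec(\C)$, hence $[W,Y]=[V,Y]-[X,Y]\equiv -[X,Y]\equiv[Y,X]\pmod\C$, which is precisely the relation characterizing $h(X)$; uniqueness gives $W=h(X)$. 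For injectivity: if $\sym{[X]}=\sym{[X']}$ then reducing mod $\C$ gives $[X]=[X']$ (because $h(\cdot)\in\Sec(\C)$), so the two classes coincide.

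\textbf{Main obstacle.} The only substantive point is the non-degeneracy argument in Step~(i): one must be careful that ``$\beta$ non-degenerate'' in the sense of Definition~\ref{def:cont_mfd} (i.e.\ $\beta(X,\cdot)\equiv0\Rightarrow X\equiv0$), combined with the rank equality $\rank\C=\rank(\C^\ast\otimes\N\C)$, genuinely yields that $Z\mapsto\beta(Z,\cdot)$ is a fiberwise \emph{isomorphism} and not merely injective on sections. This is standard linear algebra fiber by fiber (an injective linear map between equidimensional spaces is onto), but it is where the contact condition is actually used; everything else is the tensoriality bookkeeping already rehearsed in the construction of $\beta$. I would also remark, as the authors emphasize, that this description makes no reference to a choice of contact form — the generating-function picture is recovered only after trivializing $\N\C$ via a contact $1$-form.
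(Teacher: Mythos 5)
Your proof is correct and follows essentially the same route as the paper's: tensoriality of $Y\mapsto[Y,X]\bmod\C$ (the paper's form $\alpha_X$), non-degeneracy of $\beta$ to characterize $h(X)$, then the well-definedness computation, the CVF check, and bijectivity. The only deviations are presentational — you make the \emph{existence} of $h(X)$ explicit through the fiberwise rank count $\rank\C=\rank(\C^\ast\otimes\N\C)$, a point the paper leaves implicit, and you obtain surjectivity by directly verifying $h(X)=V-X$ from the defining relation rather than via the paper's remark that the $\N\C$-class uniquely determines a CVF; both are minor refinements of the same argument.
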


\begin{remark}
Throughout we will denote by $X\in\X(\M)$ vector fields on $\M$, by $Y\in\Sec(\C)$ vector fields valued in $\C$ and by $C$ (also with variants, like $\sym{\phi}$, $\sym{[X]}$ or $\cont{\phi}$) contact vector fields.  
\end{remark}

\begin{proof}
Let us begin with introducing the following geometric construction. With every smooth vector field $X\in\X(\M)$ one can associate a $\N\C$-valued 1-form $\alpha_X:\C\ra\N\C$ defined by the formula
$$\alpha_X(Y):=[Y,X]\mod\C\ ,$$ 
where $Y$ is a $\C$-valued vector field. The correctness of this definition follows from the fact that for every $\C$-valued vector field $Y$ and for any function $\phi\in C^\infty(\M)$ we have 
$$[\phi\cdot Y,X]\equiv \phi\cdot[Y,X]\mod\C\ . $$

Using the one-form $\alpha_X$ we can prove that $[X]\mapsto\sym{[X]}$ is a well-defined map, i.e., that the value of $\sym{[X]}$ does not depend on the choice of the representative $X$. Indeed, we can interpret $h(X)$ as the unique (note that $\beta$ is non-degenerate) solution of the equation $\alpha_X(\cdot)=\beta(h(X),\cdot)$. Now observe that if $X$ and $X'$ are two different representatives of $[X]$, then $Y:=X'-X$ is a $\C$-valued vector field on $\M$. Thus we have $\alpha_Y(\cdot)=-\beta(Y,\cdot)$ and hence, using the obvious linearity of $\alpha_X$ with respect to $X$, we get
$$\beta(h(X'),\cdot)=\alpha_{X'}(\cdot)=\alpha_{X+Y}(\cdot)=\alpha_X(\cdot)+\alpha_Y(\cdot)=\beta(h(X),\cdot)-\beta(Y,\cdot)=\beta(h(X)-Y,\cdot) \ .$$
We conclude that $h(X')=h(X)-Y$ and, consequently, 
$$\sym{[X']}=X'+h(X')=X+Y+h(X)-Y=X+h(X)=\sym{[X]}\ .$$
\smallskip

Secondly, observe that $\sym{[X]}$ is a CVF. Indeed, by construction, given any $\C$-valued vector field $Y$ we have 
$$[\sym{[X]},Y]=[X+h(X),Y]\equiv 0\mod\C\ ,$$ 
i.e., $[\sym{[X]},Y]$ is a $\C$-valued vector field.
\medskip

Finally, we need to check that every CVF is of the form $\sym{[X]}$. By construction the class of $\sym{[X]}$ in $\N\C$ is equal to the class of $X$ in $\N \C$ (these two vector fields differ by a $\C$-valued vector field $h(X)$). Thus the classes of CVF's of the form $\sym{[X]}$ realize every possible section of $\N\C$. Now it is enough to observe that the $\N\C$-class uniquely determines a CVF. Indeed, if $C$ and $C'$ are two CVF's belonging to the same class in $\N\C$, then their difference $X-X'$ is a $\C$-valued CVF, i.e., $[C-C',Y]\equiv 0\mod \C$ for any $\C$-valued vector field $Y$. That is, $\beta(C-C',\cdot)\equiv 0$ and from the non-degeneracy of $\beta$ we conclude that $C-C'\equiv 0$. 
This ends the proof.  \end{proof}

\begin{remark}
\label{rem:generator}
It is natural to call a vector field $X\in \X(\M)$ (or its $\N\C$-class $[X]$) a \emph{generator} of the CVF $\sym{[X]}$. Observe that the $\N\C$-class of the CVF $\sym{[X]}$ is the same as the class of its generator $X$ (they differ by a $\C$-valued vector field $h(X)$). 
\end{remark}

In the literature, see e.g., \cite{Marle_Liber_sympl_geom_anal_mech_1987}, a contact distribution $\C$ on $\M$ is often presented as the kernel of a certain 1-form $\omega\in\Lambda^1(\M)$ (such an $\omega$ is then called a \emph{contact form}). In the language of $\omega$, the maximum non-degeneracy condition can be expressed as the non-degeneracy of the 2-form $\dd\omega$ on $\C$. The latter is equivalent to the condition that $\omega\wedge(\dd\omega)^{\wedge n}$, where $n=\frac 12\rank \C$, is a volume form on $\M$ (i.e., $\omega\wedge(\dd\omega)^{\wedge n}\neq 0$). 

Also CVF's have an elegant characterization in terms of contact forms. One can show that CVF's are in one-to-one correspondence with smooth functions on $\M$. Choose a contact form $\omega$ such that $\C=\ker\omega$, then this correspondence is given by an assignment $\phi\mapsto \sym{\phi}$, where $\sym{\phi}$ is the unique vector field on $M$ such that $\omega(\sym{\phi})=\phi$ and $(\sym{\phi}\lrcorner\dd\omega)|_{\C}=-\dd \phi|_{\C}$. A function $\phi$ is usually called the \emph{generating function} of the corresponding CVF $\sym{\phi}$ associated with the contact form $\omega$. Notice that given a contact vector field $\sym{}=\sym{\phi}$ and a contact form $\omega$ one can recover the generating function simply by evaluating $\omega$ on $\sym{}$, i.e., $\phi=\omega(\sym{})$. 

It is interesting to relate the construction $\phi\mapsto \sym{\phi}$ with the construction $[X]\mapsto \sym{[X]}$ given above. Namely, the choice of a contact form $\omega$ allows to introduce a vector field $R\in\X(\M)$ (known as the \emph{Reeb vector field}) defined uniquely by the conditions $\omega(R)=1$ and $R\lrcorner\dd\omega=0$. Since $R$ is not contained in $\C=\ker\omega$, its class $[R]$ establishes a basis of the normal bundle $\N\C$. Consequently, we can identify smooth functions on $\M$ with sections of $\N\C$, via $\phi\mapsto [\phi R]$. Now it is not difficult to prove that $\sym{\phi}=\sym{[\phi R]}$ and conversely that $\sym{[X]}=\sym{\phi}$ for $\phi=\omega(\sym{[X]})=\omega(X)$. The details are left to the reader. 

Note, however, that the description of the contact distribution $\C$ in terms of a contact form $\omega$ is, in general, non-canonical (as every rescaling of $\omega$ by a nowhere-vanishing function gives the same kernel $\C$) and valid only locally (as there clearly exist contact distributions which cannot be globally presented as kernels of single 1-forms). For this reason the description of a contact manifold $(\M,\C)$ in terms of $\C$ and related objects (e.g., $\N\C$, $\beta$) is more fundamental and often conceptually simpler (for example, in the description of CVF's) than the one in terms of $\omega$. Not to mention that, for instance, the construction of the CVF $\sym{\phi}$ does depend on the particular choice of $\omega$, whereas the construction of $\sym{[X]}$ is universal. 

\begin{remark}
\label{rem:cvf_as_aff_control}
In the context of CVF's on a contact manifold $(\M,\C)$ it is worth noticing an elegant correspondence between CVF's on $\M$ and a certain class of control-affine systems on $\M$. A \emph{control-affine  system} on a manifold $\M$ is usually understood as a differential equation of the form
\begin{equation}
\label{eqn:aff_cs}
\dot x=f(x)+\sum_{i=1}^mu^ig_i(x)\ .
\end{equation} 
Here $f,g_i\in\X(\M)$ are smooth vector fields ($f$ is usually called a \emph{drift}) and $(u^1,\hdots,u^m)^T\in\R^m$ are control parameters. Trajectories of the control system \eqref{eqn:aff_cs} are integral curves $\dot x(t)\in \mathcal{A}(x(t))$ of the affine distribution
$$\mathcal{A}=f+\mathcal{G}\subset\T\M\ ,$$
where $\mathcal{G}=\<g_1,\hdots,g_m>$. Note that the distribution $\mathcal{G}$, the linear part of $\mathcal{A}$, is well-defined, whereas the drift is defined only relative to $\mathcal{G}$, i.e., $f+\mathcal{G}=f'+\mathcal{G}$, whenever $f-f'\in\mathcal{G}$. In the light of our considerations about CVF's it is easy to prove the following fact.
\begin{proposition}
\label{prop:cfv_as_aff_cs}
Let $(\M,\C)$ be a contact manifold. There is a one-to-one correspondence between CVF's on $\M$ and control-affine systems (equivalently, affine distributions) on $\M$ of the form $\mathcal{A}=X+\C\subset\T\M$, where $X\in\Sec(\M)$. 
\end{proposition}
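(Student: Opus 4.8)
The plan is to exhibit the bijection directly by matching up the two data structures. Given a CVF $\sym{}$ on $(\M,\C)$, Lemma \ref{lem:cvf} tells us that its $\N\C$-class $[\sym{}]$ determines it uniquely and that every section of $\N\C$ arises this way. Pick any representative $X\in\X(\M)$ of $[\sym{}]$; then the affine distribution $\mathcal{A}:=X+\C$ is well-defined independently of the choice of $X$ (two representatives differ by a $\C$-valued field, which gets absorbed into the $\C$-summand), so $\sym{}\mapsto \mathcal{A}=X+\C$ is a well-defined map from CVF's to affine distributions whose linear part is $\C$. Conversely, given an affine distribution $\mathcal{A}=X+\C$ with $X\in\X(\M)$, its ``drift modulo $\C$'' is a well-defined element of $\Sec(\N\C)$, namely $[X]$; by Lemma \ref{lem:cvf} this class corresponds to the unique CVF $\sym{[X]}=X+h(X)$. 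One then checks the two assignments are mutually inverse: starting from $\sym{}$, passing to $\mathcal{A}=X+\C$ (with $X$ a representative of $[\sym{}]$) and back produces $\sym{[X]}$, which has the same $\N\C$-class as $\sym{}$, hence equals $\sym{}$ by the uniqueness part of Lemma \ref{lem:cvf}; starting from $\mathcal{A}=X+\C$ and passing to $\sym{[X]}$ and back yields $\sym{[X]}+\C$, which equals $X+\C=\mathcal{A}$ since $h(X)\in\Sec(\C)$. Finally, the stated equivalence between affine distributions and control-affine systems is the standard observation already recalled just above Proposition \ref{prop:cfv_as_aff_cs}: choosing a local frame $g_1,\dots,g_m$ of $\C$ and a local section $X$ of $\mathcal{A}$ presents $\mathcal{A}$ as the system $\dot x=X(x)+\sum_i u^i g_i(x)$, and the non-uniqueness of this presentation ($f+\mathcal{G}=f'+\mathcal{G}$ when $f-f'\in\mathcal{G}$, change of frame) matches exactly the non-uniqueness of the representative $X$ and the frame of $\C$.

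In practice I would phrase the proof in one short paragraph: the maps $\sym{}\mapsto [\sym{}]\mapsto (\text{affine distribution with drift a representative of }[\sym{}])$ and $\mathcal{A}=X+\C\mapsto [X]\mapsto \sym{[X]}$ are, by Lemma \ref{lem:cvf}, inverse bijections between CVF's and affine distributions of the form $X+\C$, and the latter are in obvious one-to-one correspondence with control-affine systems of that form.

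The only genuinely non-routine point — and the one I'd spell out most carefully — is the well-definedness going in the CVF-to-distribution direction: one must verify that replacing the representative $X$ of $[\sym{}]$ by $X'=X+Y$ with $Y\in\Sec(\C)$ does not change $X+\C$, which is immediate, and, symmetrically, that in the distribution-to-CVF direction the class $[X]\in\Sec(\N\C)$ extracted from $\mathcal{A}=X+\C$ really is independent of the chosen section $X$ of $\mathcal{A}$. Both reduce to the trivial fact that the summand $\C$ is intrinsic to $\mathcal{A}$; the slightly subtler bijectivity with actual CVF's is then handed to us wholesale by Lemma \ref{lem:cvf}, so there is no real obstacle here beyond bookkeeping.
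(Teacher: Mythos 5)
Your proposal is correct and follows essentially the same route as the paper: both directions rest on Lemma \ref{lem:cvf}, sending a CVF $\sym{}$ to the affine distribution $\sym{}+\C$ and an affine distribution $\mathcal{A}=X+\C$ to the unique CVF $\sym{[X]}=X+h(X)\in\Sec(\mathcal{A})$, with $h(X)\in\Sec(\C)$ guaranteeing the maps are mutually inverse. Your extra care about well-definedness modulo the choice of representative is the same bookkeeping the paper handles implicitly via the $\N\C$-class.
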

Indeed, to every CVF $\sym{}$, we attach the affine distribution (control-affine system) $\mathcal{A}=\sym{}+\C$. Conversely, given an affine distribution (control-affine system) $\mathcal{A}=X+\mathcal{C}$, there exists a unique CVF  $\sym{}\in\Sec(
\mathcal{A})$, namely $\sym{}=\sym{[X]}$, such that $\mathcal{A}=\sym{}+\C=\sym{[X]}+\C$. 
 In other words, on every contact manifold $(\M,\C)$, there are as many CVF's $\sym{}$'s as control-affine systems $\mathcal{A}=X+\C$'s, the correspondence being established by the map $\mathcal{A}=X+\C\mapsto \sym{[X]}$. 
\end{remark}
\section{Contact geometry of $\PP(\T^\ast M)$}
\label{sec:ptm}

In this section we shall describe the natural contact structure on $\PP(\T^\ast M)$ and its relation with the canonical symplectic structure on $\T^\ast M$ (see, e.g., \cite{Arnold_1989} or \cite{Marle_Liber_sympl_geom_anal_mech_1987}). Later it will turn out that this structure for $M=Q\times\R$ plays the crucial role in optimal control theory.

\paragraph{The canonical contact structure on $\PP(\T^\ast M)$.}

Let us denote the cotangent bundle of a manifold $M$ by $\pi_M:\T^\ast M\ra M$. The projectivized cotangent bundle $\PP(\T^\ast M)$ is defined as the space of equivalence classes of non-zero covectors from $\T^\ast M$ with $[\theta]=[\theta']$ if $\pi_M(\theta)=\pi_M(\theta')$ and $\theta=a\cdot\theta'$ for some scalar $a\in\R\setminus\{0\}$. Clearly, $\PP(\T^\ast M)$ is naturally a smooth manifold and also a fiber bundle over $M$ with the projection $\pi:\PP(\T^\ast M)\ra M$ given by $\pi:[\theta]\mapsto \pi_M(\theta)$. The fiber of $\pi$ over $p\in M$ is simply the projective space $\PP(\T^\ast_p M)$. It is worth noticing that $\PP(\T^\ast M)$ can be also understood as the space of hyperplanes in $\T M$ (a \emph{manifold of contact elements}), where we can identify each point $[\theta]\in\PP(\T^\ast M)$ 
with the hyperplane $\HH_{[\theta]}:=\ker \theta\subset \T_{\pi_M(\theta)}M$. 

\begin{lemma}
\label{lem:PTM_contact}
$\PP(\T^\ast M)$ carries a canonical contact structure given by
\begin{equation}
\label{eqn:cont_tpm}
\C_{[\theta]}=\{Y\in \T_{[\theta]}\PP(\T^\ast M)\ |\ \T\pi(Y)\in\HH_{[\theta]}=\ker\theta\} \ ,
\end{equation}
for $[\theta]\in \PP(\T^\ast M)$. 
\end{lemma}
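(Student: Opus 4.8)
The plan is to verify that the distribution $\C$ defined by \eqref{eqn:cont_tpm} is (i) a smooth co-rank one distribution on $\PP(\T^\ast M)$ and (ii) maximally non-degenerate in the sense of Definition \ref{def:cont_mfd}, i.e. that the associated $\N\C$-valued two-form $\beta$ is non-degenerate. The first point is almost immediate: $\T\pi:\T_{[\theta]}\PP(\T^\ast M)\ra \T_{\pi([\theta])}M$ is a surjective linear map (since $\pi$ is a submersion), so $\C_{[\theta]} = (\T\pi)^{-1}(\HH_{[\theta]})$ is the preimage of a hyperplane under a surjection, hence a hyperplane in $\T_{[\theta]}\PP(\T^\ast M)$; smoothness follows because both $\T\pi$ and the tautological hyperplane field $[\theta]\mapsto\HH_{[\theta]}$ depend smoothly on $[\theta]$, so $\C$ is locally cut out by a smooth nonvanishing one-form.

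For the non-degeneracy I would work in canonical coordinates. Pick local coordinates $(x^1,\dots,x^n)$ on $M$, with induced coordinates $(x^i,p_i)$ on $\T^\ast M$ and canonical symplectic form $\omega_M = \dd p_i\wedge\dd x^i$. On the open chart of $\PP(\T^\ast M)$ where $p_n\neq 0$ use the affine coordinates $(x^i, \xi_a)$ with $\xi_a = p_a/p_n$ for $a=1,\dots,n-1$; here $\HH_{[\theta]}=\ker(\dd x^n + \xi_a\,\dd x^a)$, so a natural contact one-form with $\C=\ker\omega$ is
\begin{equation}
\label{eqn:ptm_form_aux}
\omega = \dd x^n + \sum_{a=1}^{n-1}\xi_a\,\dd x^a\ .
\end{equation}
Then $\dd\omega = \sum_a \dd\xi_a\wedge\dd x^a$, and one checks directly that
\begin{equation}
\label{eqn:ptm_vol_aux}
\omega\wedge(\dd\omega)^{\wedge(n-1)} = \pm(n-1)!\ \dd x^n\wedge\dd\xi_1\wedge\dd x^1\wedge\dots\wedge\dd\xi_{n-1}\wedge\dd x^{n-1}\neq 0\ ,
\end{equation}
which is a volume form on the $(2n-1)$-dimensional chart. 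By the remark following Lemma \ref{lem:cvf} (the equivalence of the $\beta$-nondegeneracy condition with $\omega\wedge(\dd\omega)^{\wedge n}\neq 0$ for $2n=\rank\C$), this shows $\beta$ is non-degenerate on this chart. Covering $\PP(\T^\ast M)$ by the analogous charts $\{p_j\neq 0\}$, $j=1,\dots,n$, gives the claim globally; one should remark that the local contact forms \eqref{eqn:ptm_form_aux} on overlapping charts differ by multiplication by a nowhere-vanishing function (a power of $p_j/p_k$), so they all have the same kernel $\C$, which is therefore globally well-defined and independent of the choices.

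The only genuinely substantive point — and the one I would be most careful about — is the coordinate-free description of $\C$ and its link with the canonical one-form. It is worth noting that $\C$ is exactly the image of the canonical (Liouville/tautological) one-form's kernel under the projection $\T^\ast M\setminus 0 \to \PP(\T^\ast M)$: if $\lambda_M$ denotes the Liouville one-form on $\T^\ast M$, then $\ker\lambda_M|_\theta$ descends, for each ray $[\theta]$, to the hyperplane $\C_{[\theta]}$, because $\lambda_M$ is homogeneous of degree one in the fibers. Making this descent rigorous (checking that the fiberwise scaling vector field lies in $\ker\lambda_M$ so the quotient is well-defined, and that $\T\pi(\ker\lambda_M|_\theta)=\ker\theta$) is the conceptual heart of the proof, but it is standard; once it is in place, the non-degeneracy is just the coordinate computation \eqref{eqn:ptm_vol_aux} above, or equivalently the observation that $\dd\omega$ restricted to $\C$ is the pushforward of the symplectic form $\omega_M$ restricted to $\ker\lambda_M$, which is non-degenerate there since $\ker\lambda_M$ is a symplectic (in fact coisotropic of corank the scaling direction, and the scaling direction is in its own $\omega_M$-orthogonal — so the reduction is symplectic) complement to the Euler field.
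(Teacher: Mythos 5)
Your proof is correct and follows essentially the same route as the paper: on each chart you present $\C$ as the kernel of a one-form obtained by normalizing the covector (equivalently, pulling back the Liouville form along a section of $\T^\ast M\to\PP(\T^\ast M)$) and then verify maximal non-degeneracy by the Darboux-type computation $\omega\wedge(\dd\omega)^{\wedge(n-1)}\neq 0$. Your affine charts $\{p_j\neq 0\}$ are precisely the paper's sets $\U_R$ for the coordinate fields $R=\pa_{x^j}$, with your normalization $p_j=1$ playing the role of the embedding $i_R$, so the two arguments coincide up to the paper allowing an arbitrary non-vanishing field $R$.
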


In other words, $\C_{[\theta]}$ consists of all vectors in $\T_{[\theta]}\PP(\T^\ast M)$ which project, under $\T\pi$, to the hyperplane $\HH_{[\theta]}=\ker\theta$, see Fig. \ref{fig:contact_PTM}. We shall refer to this contact structure as the \emph{canonical contact structure on $\PP(\T^\ast M)$}. 
\begin{figure}[ht]%
\begin{center}
\def\svgwidth{0.5 \columnwidth}
\input{contact_fig.tex}
\caption{The canonical contact structure on $\PP(\T^\ast M)$.}
\label{fig:contact_PTM}%
\end{center}
\end{figure}

The fact that \eqref{eqn:cont_tpm} defines a contact structure is well-known in the literature. The proof is given, for instance, in Appendix 4 of the book of Arnold \cite{Arnold_1989}, where the reasoning is based on the properties of the Liouville 1-form $\Omega_M$ on the cotangent manifold $\T^\ast M$. For  convenience of our future considerations in Section \ref{sec:pmp} we shall, however, present a separate proof quite similar to the one of Arnold. 

\begin{proof}
Let $R\in\X(M)$ be any smooth vector field. We shall now construct a contact form $\omega_R$ on an open subset of $\PP(T^\ast M)$ for which $R$ will be the Reeb vector field (cf. our considerations following Remark \ref{rem:generator}). The set $\U_R=\{[\theta]\ |\ \theta(R)\neq 0\}\subset \PP(\T^\ast M)$ is an open subset and $\U_R$ projects under $\pi$ to the open subset $\{p\ |\ R(p)\neq 0\}\subset M$. In the language of hyperplanes, $\U_R$ consists of all hyperplanes in $\T M$ which are transversal to the given field $R$. Clearly the collection of subsets $\U_R$ for all possible vector fields $R$ forms an open covering of $\PP(\T^\ast M)$. The open subset $\U_R\subset \PP(\T^\ast M)$ can be naturally embedded as a co-dimension one submanifold in $\T^\ast M$ by means of the map
$$i_R:\U_R\hookrightarrow \T^\ast M\ ,$$
which assigns to the class $[\theta]\in \U_R$ the unique representative $\theta$ such that $\theta(R)=1$.
 Clearly, the natural projection $\pi:\PP(\T^\ast M)\ra M$, restricted to $\U_R$, is simply the composition of $i_R:\U_R\ra\T^\ast M$ and the cotangent projection $\pi_M:\T^\ast M\ra M$, i.e.,
\begin{equation}
\label{eqn:pi_composition}
\pi\big|_{\U_R}=\pi_M\circ i_R\ . 
\end{equation}

Denote by $\Omega_M$ the Liouville form on $\T^\ast M$, i.e., $\Omega_M\big|_\theta(Y)=\<\theta,\T\pi_M(Y)>$, for $Y\in\T_\theta\T^\ast M$. We claim that the pullback $\omega_R:=(i_R)^\ast\Omega_M$ is a contact form on $\U_R\subset \PP(\T^\ast M)$. Indeed, by definition, $Y\in\T_{[\theta]}\PP(\T^\ast M)$ belongs to $\C_{[\theta]}$ if and only if $\T\pi(Y)\in\HH_{[\theta]}=\ker\theta$. In other words, $Y\in\C_{[\theta]}$ if and only if $\<\theta,\T\pi(Y)>=0$. By \eqref{eqn:pi_composition} for $\theta=i_R([\theta])$ we have  
\begin{equation}
\label{eqn:liouville}
\<\theta,\T\pi(Y)>=\<\theta,\T\pi_M(\T i_R(Y))>=\Omega_M\big|_{\theta}(\T i_R(Y))=(i_R)^\ast \Omega_M\big|_{[\theta]}(Y)=\omega_R\big|_{[\theta]}(Y)\ .
\end{equation}
We conclude that $\C|_{\U_R}=\ker\omega_R$.

To finish the proof it is enough to check that $\omega_R$ satisfies the maximum non-degeneracy condition. This can be easily seen by introducing local coordinates $(x^0,x^1,\hdots,x^n)$ on $M$ in which $R=\pa_{x^0}$ (recall that $R$ is non-vanishing on $\pi(\U_R)$, so such a choice is locally possible). Let $(x^i,p_i)$ be the induced coordinates on $\T^\ast M$. It is clear that in these coordinates the image $i_R(\U_R)\subset\T^\ast M$ is characterized by equation $p_0=1$ and thus the Liouville form $\Omega_M=\sum_{i=0}^np_i\dd x^i$ restricted to $i_R(\U_R)$ is simply $\dd x^0+\sum_{i=1}^np_i\dd x^i$. Obviously, the pullback functions $\widetilde x^i:=(i_R)^\ast x^i$ with $i=0,\hdots,n$ and
$\widetilde p_j:=(i_R)^\ast p_j$ with $j=1,\hdots,n$ form a coordinate system in $\U_R$. In these coordinates the form $\omega_R=(i_R)^\ast\Omega_M$ simply reads as $\dd \widetilde x^0+\sum_{i=1}^n\widetilde p_i\dd \widetilde x^i$. It is a matter of a simple calculation to check that such a one-form satisfies the maximum non-degeneracy condition. We conclude that $\omega_R$ is, indeed, a contact form on $\U_R$ for the canonical contact structure on $\PP(\T^\ast M)$.   
\end{proof}

\paragraph{Contact transformations of $\PP(\T^\ast M)$ induced by diffeomorphisms.}
In this paragraph we will define contact transformations of $\PP(\T^\ast M)$ which are natural lifts of diffeomorphisms of the base $M$. 

\begin{definition}
\label{def:PF}
Let $F:M\ra M$ be a diffeomorphism. Its tangent map $\T F:\T M\ra\T M$ induces a natural transformation $\proj{F}:\PP(\T^\ast M)\ra\PP(\T^\ast M)$ of the space of hyperplanes in $\T M$, i.e., given a hyperplane $\HH\subset\T_p M$, we define the hyperplane $\proj{F}(\HH)\subset\T_{F(p)}M$ to be simply the image $\T F(\HH)$. The map $\proj{F}$ shall be called the \emph{contact lift of $F$}.
\end{definition}
 Observe that if $\HH=\ker\theta$, then $\T F(\HH)=\ker((F^{-1})^\ast(\theta))$. In other words, $\proj{F}$ is the projection of $(F^{-1})^\ast:\T^\ast M\ra\T^\ast M$ to $\PP(\T^\ast M)$ (note that $(F^{-1})^\ast$ is linear on fibers of $\T^\ast M$, so this projection is well-defined)
$$\proj{F}([\theta])=[(F^{-1})^\ast\theta]\ .$$  
It is worth noticing that $\proj{F}$ respects the fiber bundle structure of $\pi:\PP(\T^\ast M)\ra M$:
\begin{equation}
\label{eqn:diag_PF}
\xymatrix{
\PP(\T^\ast M)\ar[rr]^{\proj{F}}\ar[d]^{\pi}&& \PP(\T^\ast M)\ar[d]^{\pi}\\
M\ar[rr]^{F}&& M \ . 
}\end{equation}

We claim that 
\begin{lemma}
\label{lem:PF_contact}
$\proj{F}$ is a contact transformation with respect to the canonical contact structure on $\PP(\T^\ast M)$. 
\end{lemma}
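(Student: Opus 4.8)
The plan is to verify directly that $\proj{F}$ preserves the canonical contact distribution $\C$ defined in Lemma \ref{lem:PTM_contact}, using the characterization \eqref{eqn:cont_tpm}: a tangent vector lies in $\C_{[\theta]}$ precisely when its projection under $\T\pi$ lands in the hyperplane $\HH_{[\theta]}=\ker\theta$. Since $\C$ is described purely in terms of the fibration $\pi$ and the hyperplanes $\HH_{[\theta]}$, and since Definition \ref{def:PF} tells us that $\proj{F}$ acts on hyperplanes simply as $\HH\mapsto \T F(\HH)$, the statement should reduce to the intertwining relation \eqref{eqn:diag_PF}, i.e. $\pi\circ\proj{F}=F\circ\pi$, which yields $\T\pi\circ\T\proj{F}=\T F\circ\T\pi$.

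First I would fix $[\theta]\in\PP(\T^\ast M)$, write $[\theta']:=\proj{F}([\theta])$ with $\theta'=(F^{-1})^\ast\theta$, and take an arbitrary $Y\in\C_{[\theta]}$, so that $\T\pi(Y)\in\HH_{[\theta]}=\ker\theta$. I must show $\T\proj{F}(Y)\in\C_{[\theta']}$, that is, $\T\pi(\T\proj{F}(Y))\in\HH_{[\theta']}=\ker\theta'$. Using the commutative square \eqref{eqn:diag_PF} differentiated, $\T\pi(\T\proj{F}(Y))=\T F(\T\pi(Y))$. By Definition \ref{def:PF}, $\HH_{[\theta']}=\proj{F}(\HH_{[\theta]})=\T F(\HH_{[\theta]})$, and since $\T\pi(Y)\in\HH_{[\theta]}$ we get $\T F(\T\pi(Y))\in\T F(\HH_{[\theta]})=\HH_{[\theta']}$, which is exactly what is needed. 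This shows $\T\proj{F}(\C_{[\theta]})\subset\C_{[\theta']}$; applying the same argument to $F^{-1}$ (whose contact lift is $\proj{F}^{-1}$, as follows from the functoriality $\proj{F\circ G}=\proj{F}\circ\proj{G}$, itself immediate from $\T(F\circ G)=\T F\circ\T G$ acting on hyperplanes) gives the reverse inclusion, and hence the equality $\T\proj{F}(\C_{[\theta]})=\C_{\proj{F}([\theta])}$ required by Definition \ref{def:cvf}.

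One should also check that $\proj{F}$ is a genuine diffeomorphism of $\PP(\T^\ast M)$: this is clear since $(F^{-1})^\ast:\T^\ast M\to\T^\ast M$ is a fiberwise-linear diffeomorphism carrying nonzero covectors to nonzero covectors, hence descends to a smooth map on the projectivization with smooth inverse induced by $F^\ast$. Alternatively, one can give a one-line argument using the contact forms $\omega_R$ from the proof of Lemma \ref{lem:PTM_contact}: since $\omega_R=(i_R)^\ast\Omega_M$ and $(F^{-1})^\ast$ preserves the Liouville form $\Omega_M$ up to the obvious naturality, one checks that $\proj{F}$ pulls $\omega_R$ back to a nonvanishing multiple of $\omega_{F^\ast R}$, so $\proj{F}$ carries $\ker\omega_{F^\ast R}$ to $\ker\omega_R$; but this is less transparent than the direct hyperplane argument and I would relegate it to a remark.

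I do not expect a genuine obstacle here — the lemma is essentially a formal consequence of the fact that both the contact structure and the map $\proj{F}$ are defined ``hyperplane-wise'' over the fibration $\pi$, so the only real content is the diagram \eqref{eqn:diag_PF} together with the naturality of $\T F$. The mild point requiring care is simply bookkeeping: keeping straight that $\proj{F}$ moves the base point by $F$ (so that $\C_{[\theta]}$ and $\C_{\proj{F}([\theta])}$ sit over different points $p$ and $F(p)$), and that the identification of $\HH_{[\theta']}$ with $\T F(\HH_{[\theta]})$ is precisely the definition of $\proj{F}$ on contact elements rather than something to be proved.
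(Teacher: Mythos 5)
Your proof is correct and takes essentially the same route as the paper's: both differentiate the commutative square \eqref{eqn:diag_PF} to identify $\T\pi\big(\T\proj{F}(Y)\big)$ with $\T F\big(\T\pi(Y)\big)$ and then check that this lands in the image hyperplane, the paper doing so via the pairing computation $\<(F^{-1})^\ast\theta,\T F(\ul Y)>=\<\theta,\ul Y>=0$ while you invoke the equivalent identity $\ker\big((F^{-1})^\ast\theta\big)=\T F(\ker\theta)$ already noted after Definition \ref{def:PF}. Your additional points (the reverse inclusion via $F^{-1}$, the smoothness of $\proj{F}$, and the contact-form alternative) are just slightly more explicit bookkeeping than the paper records, which itself mentions the $\omega_R$-based argument only as a remark.
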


\begin{proof}
Let $Y$ be an element of $\T_{[\theta]}\PP(\T^\ast M)$ projecting to $\T\pi(Y)=:\ul Y$ under $\T\pi$. By diagram \eqref{eqn:diag_PF}, the tangent map $\T\proj{F}$ sends $Y$ to an element of $\T_{[(F^{-1})^\ast\theta]}\PP(\T^\ast M)$ lying over $\T F(\ul Y)$. 

Now if $Y$ belongs to the contact distribution $\C_{[\theta]}$, i.e., see \eqref{eqn:cont_tpm}, if $\ul Y\in \ker\theta$, then 
$$\<(F^{-1})^\ast\theta,\T F(\ul Y)>=\<\theta,\T F^{-1}\T F(\ul Y)>=\<\theta,\ul Y>=0\ .$$ 
Consequently, $\T F(\ul Y)\in \ker (F^{-1})^\ast\theta$, and thus $\T\proj{F}(Y)$ belongs to $\C_{[(F^{-1})^\ast\theta]}$, which ends the proof.  
\end{proof}

Let us remark that an alternative way to prove the above result is to show that $(F^{-1})^\ast$ maps the contact form $\omega_R$ to $\omega_{\T F(R)}$. To prove that, one uses the fact that the pullback $(F^{-1})^\ast$ preserves the Liouville form. 

\paragraph{CVF's on $\PP(\T^\ast M)$ induced by base vector fields.}
The results of the previous paragraph have their natural infinitesimal version. 
\begin{definition}
\label{def:PX}
Let $X\in\X(M)$ be a smooth vector field. By the \emph{contact lift of $X$} we shall understand the contact vector field $\cont{X}$ on $\PP(\T^\ast M)$ whose flow is $\proj{\A{t}}$, the contact lift of the flow $\A{t}$ of $X$. 
\end{definition}
The correctness of the above definition is a consequence of a simple observation that the contact lift preserves the composition of maps, i.e., $\proj{F\circ G}=\proj{F}\circ\proj{G}$ for any pair of maps $F,G:M\ra M$. It follows that the contact lift of the flow $\A{t}$ is a flow of contact transformations of $\proj{\T^\ast M}$ and as such it must correspond to some contact vector field (cf. Definition \ref{def:cvf}).

An analogous reasoning shows that given a \nice\ TDVF $X_t\in\X(M)$ and the related TD flow $\A{t\tau}:M\ra M$, the contact lift of the latter, i.e., $\PP(\A{t\tau})$, will consist of contact transformations and will satisfy all the properties of the TD flow. By the results of Proposition \ref{prop:contact_evolution}, $\proj{\A{t\tau}}$ is a TD flow associated with some contact TDVF (see also Lemma \ref{lem:exist_TDVF}). Obviously this field is just $\cont{X_t}$. The justification of this fact is left for the reader. 

\begin{lemma}
\label{lem:PX_contact} 
The CVF $\cont{X}$ is generated (in the sense of Lemma \ref{lem:cvf}) by the $\N\C$-class of $\wtilde X$, where $\wtilde X$ is any smooth vector field on $\PP(\T^\ast M)$ which projects to $X$ under $\T\pi$, i.e.,
$$\cont{X}=\sym{[\wtilde X]}\ .$$  
\end{lemma}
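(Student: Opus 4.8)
The claim identifies the contact lift $\cont{X}$ with the canonical contact vector field $\sym{[\wtilde X]}$ associated, via Lemma~\ref{lem:cvf}, to the $\N\C$-class of any vector field $\wtilde X$ on $\PP(\T^\ast M)$ projecting onto $X$ under $\T\pi$. There are two things to check: first, that the class $[\wtilde X]\in\Sec(\N\C)$ does not depend on the particular lift $\wtilde X$ of $X$; second, that the CVF determined by this class coincides with $\cont{X}$. The plan is to handle the well-definedness first, then match the two vector fields either at the level of flows or infinitesimally via generating functions/Reeb fields.

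\textbf{Step 1: independence of the lift.} If $\wtilde X$ and $\wtilde X'$ both project to $X$, then $\wtilde X-\wtilde X'$ is $\pi$-vertical, i.e.\ $\T\pi(\wtilde X-\wtilde X')=0$. By the very definition \eqref{eqn:cont_tpm} of the canonical contact distribution $\C$, every $\pi$-vertical vector lies in $\C_{[\theta]}$ (it projects to $0\in\ker\theta$). Hence $\wtilde X\equiv\wtilde X'\bmod\C$, so $[\wtilde X]=[\wtilde X']$ in $\Sec(\N\C)$, and $\sym{[\wtilde X]}$ is unambiguously defined. (In particular one is free to choose $\wtilde X$ to be $\cont{X}$ itself once we know $\cont{X}$ projects to $X$, which follows from diagram \eqref{eqn:diag_PF} applied to $F=\A{t}$ and differentiated in $t$.)

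\textbf{Step 2: matching the CVF's.} Recall $\cont{X}$ is defined by Definition~\ref{def:PX} as the CVF whose flow is $\proj{\A{t}}$. By Lemma~\ref{lem:cvf}, to prove $\cont{X}=\sym{[\wtilde X]}$ it suffices to show that $\cont{X}$ is a CVF (which it is, being the infinitesimal generator of the flow $\proj{\A{t}}$ of contact transformations, cf.\ Lemma~\ref{lem:PF_contact} and Definition~\ref{def:cvf}) and that its $\N\C$-class equals $[\wtilde X]$; since the $\N\C$-class uniquely determines a CVF (the last paragraph of the proof of Lemma~\ref{lem:cvf}), this finishes the argument. But by Step~1 it is enough to show that $\cont{X}$ projects to $X$ under $\T\pi$, for then $\cont{X}$ is itself an admissible lift $\wtilde X$, so $[\cont{X}]=[\wtilde X]$ and both CVF's share this class. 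That $\T\pi\circ\cont{X}=X\circ\pi$ follows by differentiating the commuting square \eqref{eqn:diag_PF} with $F=\A{t}$ at $t=0$: the top row differentiates to $\cont{X}$, the bottom row to $X$, and the vertical arrows are $\pi$, giving $\T\pi(\cont{X})=X\circ\pi$.

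\textbf{Alternative/verification via a contact form.} One may instead verify the identity locally using the contact form $\omega_R=(i_R)^\ast\Omega_M$ from the proof of Lemma~\ref{lem:PTM_contact}. Using that $(F^{-1})^\ast$ preserves the Liouville form (the remark after Lemma~\ref{lem:PF_contact}), the flow $\proj{\A{t}}$ pulls $\omega_R$ back to $\omega_{\T\A{-t}(R)}$, and differentiating gives a formula for $\mathcal{L}_{\cont{X}}\omega_R$; comparing with $\omega_R(\cont{X})=\<\theta,\T\pi(\cont{X})>=\<\theta,X>=\omega_R(\wtilde X)$ (using \eqref{eqn:liouville}) recovers the generating function of $\cont{X}$ relative to $\omega_R$ and shows it agrees with that of $\sym{[\wtilde X]}$ via the Reeb-field identification $\phi\mapsto[\phi R]$ discussed after Remark~\ref{rem:generator}. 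The only mild subtlety — and the place where one must be a little careful rather than purely formal — is the passage between the flow-level statement (Definition~\ref{def:PX}) and the bracket-level characterization (Lemma~\ref{lem:cvf}), i.e.\ invoking that an $\N\C$-class determines its CVF uniquely; everything else is a direct unwinding of definitions and of diagram \eqref{eqn:diag_PF}.
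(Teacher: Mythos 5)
Your proposal is correct and follows essentially the same route as the paper's own proof: show $\T\pi(\cont{X})=X$ from the commuting diagram \eqref{eqn:diag_PF}, note that fields with equal $\T\pi$-projections share the same $\N\C$-class by \eqref{eqn:cont_tpm}, and conclude via the uniqueness of a CVF with a given $\N\C$-class established in the proof of Lemma \ref{lem:cvf}. The extra verification via $\omega_R$ and the explicit well-definedness step are fine but not needed beyond what the paper already records.
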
  
\begin{proof}
Since $\proj{\A{t}}$, the flow of $\cont{X}$, projects under $\pi$ to $\A{t}$, the flow of $X$, we conclude that $X=\T\pi(\cont{X})$. As we already know from the proof of Lemma \ref{lem:cvf}, a CVF is uniquely determined by its class in $\N\C$. By \eqref{eqn:cont_tpm}, the $\N\C$-class of a field $Y\in\X(\PP(\T^\ast M))$ is completely determined by its $\T\pi$-projection. In other words, if two fields $Y$ and $Y'$ have the same $\T\pi$-projections, then $Y-Y'$ is a $\C$-valued vector field. Thus the field $\wtilde X$ has the same $\N\C$-class as the CVF $\cont{X}$ so, by the results of Lemma \ref{lem:cvf} (see also Remark \ref{rem:generator}), it follows $\cont{X}=\sym{[\wtilde X]}$.    
\end{proof}

\begin{remark}
\label{rem:PX_generator}
We shall end our considerations about the contact lift $\cont{X}$ by discussing its description in terms of generating functions (cf. our comments following Remark \ref{rem:generator}). Let us choose a vector field $R$ on $M$ and fix a contact form $\omega_R=(i_R)^\ast\Omega_M$ on $\U_R\subset\PP(\T^\ast M)$. Using the results of our  previous Section \ref{sec:contact} and with the help of the contact form $\omega_R$, the CVF $\cont{X}$ can be presented as $\sym{\phi}$ for some generating function $\phi:\U_R\ra\R$. This function is simply the evaluation of $\omega_R$ at $\cont{X}$. In fact,
$$\phi([\theta])=\omega_R\big|_{[\theta]}(\cont{X})=\omega_R\big|_{[\theta]}(\sym{[\wtilde X]})\overset{\mathrm{Rem.}\ \ref{rem:generator}}=\omega_R\big|_{[\theta]}(\wtilde X)\overset{\eqref{eqn:liouville}}=\<\theta,\T\pi(\wtilde X)>=\<\theta,X>\ ,$$
where $\theta=i_R([\theta])$, i.e. $\theta(R)=1$. In other words, the value of the   generating function of $\cont{X}$ on the class $[\theta]$ equals the value of the representative $\theta$, defined by $\theta(R)=1$, on the field $X$ which is being lifted. 
\end{remark}

\begin{remark}
It is worth mentioning the following illustrative picture which was pointed to us by Janusz Grabowski. Every contact structure on a manifold $N$ can be viewed as a homogeneous symplectic structure on some principal $GL(1,\R)$-bundle over $N$. In the case of the canonical contact structure on $N=\PP(\T^\ast M)$ the corresponding bundle is simply $\T^\ast_0 M$, the cotangent bundle of $M$ with the zero section removed, equipped with the natural action of $\R\setminus\{0\}=GL(1,\R)$ being the restriction of the multiplication by reals on $\T^\ast M$. The canonical symplectic structure is obviously homogeneous with respect to this action. Now every homogeneous symplectic dynamics on $\T_0^\ast M$ reduce to contact dynamics on $\PP(\T^\ast M)$. For more information on this approach the reader should consult \cite{Grabowski_2013} and \cite{Bruce_Grabowska_Grabowski_2015}.
\end{remark}
\section{The Pontryagin Maximum Principle}
\label{sec:pmp}

\paragraph{The Pontryagin Maximum Principle.}
A \emph{control system} on a manifold $Q$ is constituted by a family of vector fields $f:Q\times U\ra \T Q$ parametrized by a topological space $U$. It can be understood as a parameter-dependent differential equation
\begin{equation}
\label{eqn:cs}\tag{CS}
\dot q(t)= f(q(t),u(t)),\quad u(t)\in U\ .
\end{equation}
For a given measurable and locally bounded $u(t)\in U$, the solution $q(t)$ of \eqref{eqn:cs} is usually called a \emph{trajectory} of \eqref{eqn:cs} associated with the \emph{control} $u(t)$. 

An introduction of a \emph{cost function} $L:Q\times U\ra\R$ allows to consider the following \emph{optimal control problem} \eqref{eqn:ocp}
\begin{equation}
\label{eqn:ocp}\tag{OCP}
\begin{split}
&\dot q(t)= f(q(t),u(t)),\quad u(t)\in U\\
&\int_0^TL(q(t),u(t))\dd t \lra \min\ .
\end{split}
\end{equation}
The minimization is performed over $u(t)$'s which are locally bounded and measurable, the time interval $[0,T]$ is fixed and we are considering fixed-end-points boundary conditions $q(0)$ and $q(T)$. By a solution of the optimal control problem we shall understand a pair $(q(t),u(t))$ satisfying \eqref{eqn:ocp}. 

Let now $q(t)\in Q$ be the trajectory of the \eqref{eqn:cs} associated with a given control $u(t)\in U$. It is convenient to regard the related trajectory $\wt q(t)=(q(t),q_0(t))$ in the extended configuration space $\wt Q:=Q\times\R$, where $q_0(t):=\int_0^tL(q(s),u(s))\dd s$ is the cost of the trajectory at time $t$.\footnote{From now on, geometric objects and constructions associated with the extended configuration space $\wt Q$ will be denoted by bold symbols, e.g., $\wt f$, $\wt q$, $\Flow{tt_0}$, $\wt \HH_t$ etc. Normal-font symbols, e.g., $f$, $q$, $\flow{tt_0}$, $\HH_t$, will denote analogous objects in $Q$ being, in general, projections of the corresponding objects from $\wt Q$.} In fact, $\wt q(t)$ is a trajectory (associated with the same control $u(t)$) of the following extension of \eqref{eqn:cs}:
\begin{equation}
\label{eqn:cs1}\tag{$\wt{\mathrm{CS}}$}
\dot{\wt q}(t)=\wt f(\wt q(t),u(t))\quad u(t)\in U \ ,
\end{equation}
with $\wt f:=(f,L\cdot\pa_{q_0}): \wt Q\times U\ra \T\wt Q=\T Q\times\T\R$. Here we treat both $f$ and $L$ as maps from $\wt Q\times U$ invariant in the $\R$-direction in $\wt Q=Q\times\R$. In other words, we extended \eqref{eqn:cs} by incorporating the costs $q_0(t)$ as additional configurations of the system. The evolution of these additional configurations is governed by the cost function $L$. Note that the total cost of the trajectory $q(t)$ with $t\in[0,T]$ is precisely $q_0(T)$. Since the latter is fully determined by the pair $(q(t),u(t))$, it is natural to regard the extended pair $(\wt q(t),u(t))$ rather than $(q(t),u(t))$ as a solution of \eqref{eqn:ocp}.

Note that the extended configuration space $\wt Q=Q\times\R\ni\wt q=(q,q_0)$ is equipped with the canonical vector field $\wt\pa_{q_0}:=(0,\pa_{q_0})\in\T\wt Q=\T Q\times\T\R$. We shall denote the distribution spanned by this field by $\wt \RR\subset\T\wt Q$. The ray $\wt \RR^{-}_{\wt q}:=\{-r\wt\pa_{q_0}\ |\ r\in\R_{+}\}\subset\wt \RR_{\wt q}\subset\T_{\wt q}\wt Q$ contained in this distribution will be called the \emph{direction of the decreasing cost at $\wt q\in\wt Q$}.

Regarding technical assumptions, following \cite{Pontr_Inn_math_theor_opt_proc_1962}, we shall assume that $U$ is a subset of an Euclidean space, $f(q,u)$ and $L(q,u)$ are differentiable with respect to the first variable and, moreover, $f(q,u)$, $L(q,u)$, $\frac{\pa f}{\pa q}(q,u)$ and $\frac{\pa L}{\pa q}(q,u)$ are continuous as functions of $(q,u)$. In the light of Theorem \ref{thm:solutions_ODE} below it is clear that these conditions guarantee that, for any choice of a bounded measurable control $u(t)$ and any initial condition $\wt q(0)$, equation \eqref{eqn:cs1} has a unique (Caratheodory) solution defined in a neighborhood of $0$. It will be convenient to denote the TDVF's $q\mapsto f(q,u(t))$ and $\bm q\mapsto \bm f(\bm q,u(t))$ related with such a control $u(t)$ by $f_{u(t)}$ and $\bm f_{u(t)}$, respectively. In the language of Section \ref{sec:technical} technical assumptions considered above guarantee that $f_{u(t)}$ and $\bm f_{u(t)}$ are \nice\ TDVF's. In particular their TD flows $\flow{t\tau}:Q\ra Q$ and $\Flow{t\tau}:\wt Q\ra\wt Q$, respectively, are well-defined families of (local) diffeomorphisms.\footnote{From now on we will use symbols $\Flow{t\tau}$ and $\flow{t\tau}$ to denote the TD flows of TDVF's $\bm f_{u(t)}$ and $f_{u(t)}$, respectively, for a particular control $u(t)$. Note that $\Flow{t\tau}$ projects to $\flow{t\tau}$ under $\pi_1:\bm Q=Q\times \R\ra Q$.} Note that if $\wt q(t)$ with $t\in[0,T]$ is a solution of \eqref{eqn:cs1}, then for every $t,\tau\in[0,T]$ the map $\Flow{t\tau}$ is well-defined in a neighborhood of $\wt q(\tau)$.

In the above setting necessary conditions for the optimality of $(\wt q(t),u(t))$ are formulated in the following \emph{Pontryagin Maximum Principle} (PMP, in short)
\begin{theorem}[\cite{Pontr_Inn_math_theor_opt_proc_1962}]
\label{thm:pmp_hamiltonian}
Let $(\wt q(t),\wh u(t))$ be a solution of the \eqref{eqn:ocp}. Then for each $t\in [0,T]$ there exists a non-zero covector $\wt \lambda(t)\in \T^\ast_{\wt q(t)}\wt Q$ such that the curve $\wt \Lambda_t=(\wt q(t),\wt \lambda(t))$ satisfies the time-dependent Hamilton equation
\begin{equation}
\label{eqn:pmp_evolution_ham}
\dot{\wt \Lambda_t}=\vec{\wt H_t}(\wt \Lambda_t)\ , 
\end{equation}
where $\vec{\wt H_t}$ denotes the Hamiltonian vector field on $\T^\ast \wt Q$ corresponding to the time-dependent Hamiltonian
\begin{equation}
\label{eqn:hamiltonian}
\wt H_t(\wt q,\wt \lambda):=\<\wt \lambda,\wt f_{\wh u(t)}(\wt q)>\ .
\end{equation}
Moreover, along $\wt \Lambda_t$ the Hamiltonian $\wt H_t$ satisfies the following \emph{Maximum Principle}
\begin{equation}
\label{eqn:max_princip}
\wt H_t(\wt q(t),\wt \lambda(t))=\max_{v\in U}\<\wt \lambda(t),\wt f_{v}(\wt q(t))>\ .
\end{equation}
\end{theorem}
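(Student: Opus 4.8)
The final statement is the classical Pontryagin Maximum Principle, so the plan is not to prove it \emph{ex nihilo} but to re-run the standard needle-variation argument of \cite{Pontr_Inn_math_theor_opt_proc_1962} (see also \cite{Lewis_2006,Liberzon_2012}) on the extended system \eqref{eqn:cs1} over $\wt Q=Q\times\R$, organised so that the separating hyperplanes $\wt\HH_t=\ker\wt\lambda(t)$ appear explicitly. First I would fix the optimal pair $(\wt q(t),\wh u(t))$ and denote by $\Flow{t\tau}$ the TD flow of the \nice\ TDVF $\wt f_{\wh u(t)}$ (well defined by Lemma \ref{lem:TDVF_technical}). For a Lebesgue point $\tau\in(0,T)$ of $\wh u$ and a value $v\in U$, I introduce the \emph{needle (Pontryagin) variation}: keep $\wh u$ everywhere except on a short interval $[\tau-\varepsilon\ell,\tau]$, where it is replaced by the constant $v$. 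Using the local existence, uniqueness and ACB-dependence on parameters of Caratheodory solutions (Theorem \ref{thm:solutions_ODE}), a first-order expansion in $\varepsilon$ shows that the endpoint of the perturbed trajectory is displaced from $\wt q(T)$ by $\varepsilon\ell\,\T\Flow{T\tau}\big(\wt f_{v}(\wt q(\tau))-\wt f_{\wh u(\tau)}(\wt q(\tau))\big)+o(\varepsilon)$.

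Next I would collect all such first-order displacements, together with the vectors $\pm\dot{\wt q}(T)$ obtained by shifting the initial time along the trajectory, and take their closed convex conical hull $\wt\K_T\subset\T_{\wt q(T)}\wt Q$; transporting it along the solution by $\wt\K_t:=\T\Flow{tT}(\wt\K_T)$ yields a family of convex cones which, by the classical argument, is a first-order approximation of the reachable set of \eqref{eqn:cs1} from $\wt q(0)$ in the strong sense that every direction interior to $\wt\K_T$ is actually realised by a finite combination of needle variations. Granting this, optimality of $(\wt q(t),\wh u(t))$ forces the direction of decreasing cost $\wt\RR^{-}_{\wt q(T)}=\{-r\,\wt\pa_{q_0}\ |\ r>0\}$ to avoid the interior of $\wt\K_T$: otherwise a suitable combination of needle variations would produce an admissible trajectory terminating at the same point of $Q$ but with strictly smaller terminal cost $q_0(T)$ (cf. Figure \ref{fig:pmp}), contradicting minimality.

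The Hahn--Banach supporting-hyperplane theorem then supplies a non-zero covector $\wt\lambda(T)\in\T^\ast_{\wt q(T)}\wt Q$ separating the two sets, i.e.\ $\<\wt\lambda(T),w>\le 0$ for all $w\in\wt\K_T$ while $\<\wt\lambda(T),-\wt\pa_{q_0}>\ge 0$. I propagate it backward along the trajectory by the cotangent lift of the flow, $\wt\lambda(t):=(\T\Flow{Tt})^\ast\wt\lambda(T)$, so that $\wt\HH_t:=\ker\wt\lambda(t)=\T\Flow{tT}(\wt\HH_T)$; the resulting curve $t\mapsto\wt\lambda(t)$ is ACB, nowhere zero, and $\<\wt\lambda(t),\cdot>$ is non-positive on $\wt\K_t$ for every $t$. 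Now \eqref{eqn:pmp_evolution_ham} is automatic: the cotangent lift of the flow $\Flow{t\tau}$ is generated by the cotangent lift of $\wt f_{\wh u(t)}$, which (a standard fact about the Liouville form) is precisely the time-dependent Hamiltonian vector field $\vec{\wt H_t}$ of the momentum function $\wt H_t(\wt q,\wt\lambda)=\<\wt\lambda,\wt f_{\wh u(t)}(\wt q)>$, and by construction $\wt\Lambda_t=(\wt q(t),\wt\lambda(t))$ is an integral curve of that lift, hence $\dot{\wt \Lambda_t}=\vec{\wt H_t}(\wt\Lambda_t)$. For the Maximum Principle \eqref{eqn:max_princip}, note that at each Lebesgue point $t$ and for every $v\in U$ the vector $\wt f_v(\wt q(t))-\wt f_{\wh u(t)}(\wt q(t))$ is itself a needle-variation direction at time $t$, hence lies in $\wt\K_t$, so $\<\wt\lambda(t),\wt f_v(\wt q(t))>\le\<\wt\lambda(t),\wt f_{\wh u(t)}(\wt q(t))>=\wt H_t(\wt q(t),\wt\lambda(t))$, with equality at $v=\wh u(t)$.

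The one genuinely hard step is the reachable-set approximation claim used in the second paragraph: proving that the closed convex conical hull of the needle variations approximates the attainable set so tightly that its interior directions are attained is the topological heart of the PMP (an open-mapping / Brouwer-degree argument in the finite-dimensional setting), and in the present non-smooth framework it additionally needs the Caratheodory ODE theory of Section \ref{sec:technical} to control the joint dependence of solutions on the perturbation parameters $\varepsilon,\ell,\tau,v$. Everything else -- composing linearised flows and their cotangent lifts, and invoking Hahn--Banach -- is routine bookkeeping.
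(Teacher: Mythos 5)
Your proposal follows essentially the same route as the paper's own sketch (which is itself the classical argument of Pontryagin et al.): needle variations of the extended system on $\wt Q$, the closed convex cones $\wt\K_t$ transported by the TD flow $\Flow{t\tau}$, the topological lemma that interior directions of the cone are realized by admissible variations (correctly identified as the one hard step and delegated to \cite{Pontr_Inn_math_theor_opt_proc_1962}), separation of $\wt\K_T$ from $\wt\RR^-_{\wt q(T)}$, backward propagation of the covector by the cotangent lift, identification of that lift with $\vec{\wt H_t}$, and the inequality $\<\wt\lambda(t),\wt f_v(\wt q(t))-\wt f_{\wh u(t)}(\wt q(t))>\le 0$ giving \eqref{eqn:max_princip}.

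One concrete correction: you add to the cone the time-shift directions $\pm\dot{\wt q}(T)$. Those belong to the \emph{free-time} version of the problem (the paper's footnote to \eqref{eqn:cone} makes exactly this point); in the fixed-interval problem considered here they are not realizable by admissible variations, so with them included the reachable-set approximation claim you invoke is false, and if one pretended it were true the separation argument would force $\<\wt\lambda(T),\dot{\wt q}(T)>=0$, i.e.\ the spurious transversality condition $\wt H_T=0$, which is not part of the statement and fails in general for fixed $T$. Simply drop these vectors (as the paper does) and the rest of your argument goes through unchanged; the remaining minor deviation, defining $\wt\K_t$ by backward transport of $\wt\K_T$ rather than intrinsically at each $t$, is harmless since the paper's cones sit inside yours and only the separation inequalities are needed.
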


\begin{definition}
\label{def:extremal}
 A pair $(\wt q(t),\wh u(t))$ satisfying the necessary conditions for optimality provided by Theorem~\ref{thm:pmp_hamiltonian} (i.e., the existence of a covector curve $\wt \Lambda_t$ satisfying \eqref{eqn:pmp_evolution_ham}--\eqref{eqn:max_princip}) is called an \emph{extremal}.  
\end{definition}

\paragraph{Proof of the PMP.} Although the PMP is a commonly-known result, for future purposes it will be convenient to sketch  its original proof following \cite{Pontr_Inn_math_theor_opt_proc_1962}. 

Let $(\wt q(t), \wh u(t))$ be a trajectory of \eqref{eqn:cs1}. By $\Flow{t\tau}: \wt Q\ra\wt Q$, where $0\leq \tau\leq t\leq T$, denote the TD flow on $\wt Q$ of  the \nice\ TDVF $\wt f_{\wh u(t)}$ defined by the control $\wh u(t)$ (cf. Definition \ref{def:td_flow}). In other words, given a point $\wt q\in \wt Q$, the curve $t\mapsto \Flow{t 0}(\wt q)$ is the a trajectory of \eqref{eqn:cs1} associated with the control $\wh u(t)$ with the initial condition $\wt q(0)=\wt q$.

The crucial step in the proof of the PMP is introducing the, so called, \emph{needle variations} and the resulting construction of a family of sets\footnote{In the original proof in \cite{Pontr_Inn_math_theor_opt_proc_1962} the optimal control problem with a free time interval $[0,T]$ is considered. In this case, the sets $\wt\K_t$ contain additional elements. }
\begin{equation}
\label{eqn:cone}
\wt\K_t:=\operatorname{cl}\left\{\sum_{i=1}^k\Flow{t t_i}\left[\wt f_{v_i}(\wt q(t_i))-\wt f_{\wh u(t_i)}(\wt q(t_i))\right]\delta t_i\right\}\ ,
\end{equation}
where $0< t_1\leq t_2\leq\hdots \leq t_k\leq t<T$ is any finite sequence of regular points (see Appendix \ref{sec:appendix}) of the control $\wh u(\cdot)$,  $v_i$ are arbitrary elements in $U$ and $\delta t_i$ are arbitrary non-negative numbers. It is easy to see that $\wt \K_t$ is a closed and convex cone in $\T_{\wt q(t)}\wt Q$, well-defined for each regular point $t\in (0,T)$ of the control $\wh u(\cdot)$. What is more, the cones $\wt \K_t$ are ordered by the TD flow $\Flow{t\tau}$, i.e.,
\begin{equation}
\label{eqn:evolution_cone}
\T \Flow{t\tau}(\wt \K_\tau)\subset \wt \K_t\ ,
\end{equation}  
for each pair of regular points $0<\tau\leq t< T$. The above property allows to extend the construction of $\wt \K_t$ to non-regular points of $\wh u(\cdot)$ (including the end-point $T$) by setting
$$\wt \K_t:=\bigcup_{\{\tau\ |\ \tau\leq t\ \text{and $\tau$ regular}\}} \T\Flow{t\tau}(\wt \K_\tau)\subset \T_{\wt q(t)}\wt Q\ .$$
Clearly, these sets preserve all important features of $\wt \K_t$'s, i.e., they are closed and convex cones satisfying \eqref{eqn:evolution_cone} for any pair of points $0<\tau\leq t\leq T$.  

The importance of the construction of the cone $\wt \K_t$ lies in the fact that it approximates the reachable set of the control system \eqref{eqn:cs1} at the point $\wt q(t)$. In particular, it was proved in \cite{Pontr_Inn_math_theor_opt_proc_1962} that if at any point $t\in[0,T]$, the interior of the cone $\wt \K_t$ contains the direction of the decreasing cost $\wt \RR^-_{\wt q(t)}$, then the trajectory $t\mapsto\wt q(t)$, $t\in[0,T]$, cannot be optimal. 

\begin{lemma}[\cite{Pontr_Inn_math_theor_opt_proc_1962}]
If,  for any $0<t\leq T$, the ray $\wt \RR^-_{\wt q(t)}$ lies in the interior of $\wt\K_t$, then $(\wt q(t),\wh u(t))$ cannot be a solution of \eqref{eqn:ocp}. 
\end{lemma}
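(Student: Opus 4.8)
The plan is to argue by contradiction: assuming $\wt\RR^-_{\wt q(t)}$ lies in the interior of $\wt\K_t$ for some time $t^\ast\in(0,T]$, I would construct an admissible control whose trajectory ends at a point with a strictly lower cost than $\wt q(T)$, while keeping the $Q$-endpoints fixed, thereby violating optimality. The mechanism is the standard ``needle-variation'' surjectivity argument from \cite{Pontr_Inn_math_theor_opt_proc_1962}. First I would recall that, by the very definition \eqref{eqn:cone}, every element of $\wt\K_{t^\ast}$ is a limit of finite combinations $\sum_i \T\Flow{t^\ast t_i}\bigl[\wt f_{v_i}(\wt q(t_i))-\wt f_{\wh u(t_i)}(\wt q(t_i))\bigr]\delta t_i$ which are precisely the first-order variations of the endpoint $\wt q(t^\ast)$ produced by inserting, into the optimal control $\wh u$, needle perturbations of values $v_i$ on small intervals of lengths $\delta t_i$ placed at the regular times $t_i$.

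Second, I would invoke the key technical lemma behind the PMP (proved in \cite{Pontr_Inn_math_theor_opt_proc_1962}): if a convex cone of such first-order variations has nonempty interior, then for vectors $\bm w$ in that interior the endpoint map of the perturbed control is, after suitable normalization, a genuine (not merely first-order) perturbation that moves $\wt q(t^\ast)$ in the direction $\bm w$ — this is where a fixed-point / degree argument (Brouwer, applied to the map sending perturbation parameters to endpoints) is used to pass from the infinitesimal cone to an actual open set of reachable endpoints near $\wt q(t^\ast)$. Concretely, since $-\wt\pa_{q_0}$ lies in the interior of $\wt\K_{t^\ast}$, there is a needle-variation family whose endpoints sweep out a full-dimensional neighborhood of a point of the form $\wt q(t^\ast)-\varepsilon\wt\pa_{q_0}$ for small $\varepsilon>0$. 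Flowing forward by $\Flow{T t^\ast}$ (the unperturbed optimal flow on the remaining interval $[t^\ast,T]$, which is a local diffeomorphism by Lemma \ref{lem:TDVF_technical}) and using \eqref{eqn:evolution_cone}, I transport this open set of reachable endpoints to an open neighborhood of $\Flow{T t^\ast}(\wt q(t^\ast)-\varepsilon\wt\pa_{q_0})$ inside the set of points reachable at time $T$.

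Third, I would extract the contradiction. Because $\T\Flow{T t^\ast}$ fixes the $\R$-direction (indeed $\bm f=(f,L\pa_{q_0})$ is invariant in the $q_0$-direction, so $\Flow{T t^\ast}$ commutes with translations along $\wt\pa_{q_0}$ up to the first order that matters, and in any case $\wt\pa_{q_0}$ is $\Flow{t\tau}$-related to itself), the transported open set contains points whose $Q$-component equals $q(T)$ but whose cost component $q_0$ is strictly less than $q_0(T)$. That is exactly an admissible trajectory with the prescribed fixed endpoints in $Q$ and strictly smaller total cost, contradicting the assumption that $(\wt q(t),\wh u(t))$ solves \eqref{eqn:ocp}. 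Hence no such $t^\ast$ exists, which is the assertion.

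\textbf{Main obstacle.} The genuinely nontrivial step is the second one: passing from the \emph{convex cone of first-order variations} having interior points to the conclusion that the \emph{actual} endpoint map is open onto a neighborhood in that direction. This requires the careful continuity/differentiability estimates for compositions of needle variations and a topological fixed-point argument, and it is precisely the heart of Pontryagin's original proof; here I would simply cite \cite{Pontr_Inn_math_theor_opt_proc_1962} (see also \cite{Lewis_2006,Liberzon_2012}) rather than reproduce it, since the paper's stated strategy is to \emph{use} this classical lemma, not re-prove it. The remaining bookkeeping — convexity and closedness of $\wt\K_t$, the ordering \eqref{eqn:evolution_cone}, and the fact that $\Flow{T t^\ast}$ is a local diffeomorphism preserving the cost direction — is routine given the results already established above.
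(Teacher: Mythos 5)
Your outline is correct and matches how the paper treats this statement: the paper offers no proof at all, simply citing \cite{Pontr_Inn_math_theor_opt_proc_1962}, and your sketch is a faithful reconstruction of that classical needle-variation argument (cone of first-order endpoint variations, Brouwer-type openness of the endpoint map for interior directions, transport by the cost-translation-invariant flow $\Flow{Tt^\ast}$ to produce an admissible trajectory with the same $Q$-endpoints and strictly smaller cost). Since you defer the genuinely hard step --- passing from interiority of $-\wt\pa_{q_0}$ in $\wt\K_{t^\ast}$ to actual reachability of a full neighborhood --- to the same citation, your proposal is exactly as self-contained as the paper's treatment, and the surrounding bookkeeping (closedness and convexity of $\wt\K_t$, the ordering \eqref{eqn:evolution_cone}, invariance of $\wt\pa_{q_0}$ under $\T\Flow{t\tau}$) is handled correctly.
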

As a direct corollary, using basic facts about separation of convex sets,  one obtains the following 

\begin{proposition}[\cite{Pontr_Inn_math_theor_opt_proc_1962}]
Assume that $(\wt q(t),\wh u(t))$ is a solution of \eqref{eqn:ocp}. Then for each $t\in(0,T]$ there exists a hyperplane $\wt \HH_t\subset \T_{\wt q(t)}\wt Q$ separating the convex cone $\wt\K_t$ from the ray $\wt\RR^-_{\wt q(t)}$.   
\end{proposition}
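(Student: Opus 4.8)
The plan is to deduce the statement directly from the preceding Lemma together with an elementary separation theorem for convex sets, applied in each tangent space $\T_{\wt q(t)}\wt Q$ separately.

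First I would contrapose the Lemma: since $(\wt q(t),\wh u(t))$ is a solution of \eqref{eqn:ocp}, for every $t\in(0,T]$ the ray $\wt\RR^-_{\wt q(t)}$ is \emph{not} contained in $\operatorname{int}\wt\K_t$. The next step is to strengthen this to disjointness. Because $\wt\K_t$ is a closed convex cone with apex at the origin, its interior $\operatorname{int}\wt\K_t$ is an open cone, hence stable under multiplication by positive reals; since $\wt\RR^-_{\wt q(t)}=\{-r\wt\pa_{q_0}\ |\ r\in\R_+\}$ is a single orbit of this scaling action, either all of it or none of it lies in $\operatorname{int}\wt\K_t$. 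Optimality therefore yields
$$\wt\RR^-_{\wt q(t)}\cap\operatorname{int}\wt\K_t=\emptyset\qquad\text{for every }t\in(0,T].$$

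Now I would invoke the standard fact that a convex set $A$ with $\operatorname{int}A\neq\emptyset$ and a nonempty convex set $B$ with $B\cap\operatorname{int}A=\emptyset$ can be separated by a hyperplane. Taking $A=\wt\K_t$ and $B=\wt\RR^-_{\wt q(t)}$ produces a nonzero covector $\xi_t\in\T^\ast_{\wt q(t)}\wt Q$ and a constant $c_t$ with $\<\xi_t,w>\geq c_t$ for all $w\in\wt\K_t$ and $\<\xi_t,v>\leq c_t$ for all $v\in\wt\RR^-_{\wt q(t)}$. Evaluating on the origin, which lies both in $\wt\K_t$ and in the closure of $\wt\RR^-_{\wt q(t)}$, forces $c_t=0$; hence $\wt\HH_t:=\ker\xi_t\subset\T_{\wt q(t)}\wt Q$ is a hyperplane through the origin with $\<\xi_t,w>\geq 0$ on $\wt\K_t$ and $\<\xi_t,v>\leq 0$ on $\wt\RR^-_{\wt q(t)}$, i.e.\ $\wt\HH_t$ separates $\wt\K_t$ from $\wt\RR^-_{\wt q(t)}$, which is exactly the claim.

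There is no real obstacle here; the only genuine point of care is the degenerate case $\operatorname{int}\wt\K_t=\emptyset$, which does occur (for instance along abnormal trajectories $\wt\K_t$ may be a proper subspace) and in which the separation theorem quoted above is vacuous. In that case one argues by hand: a convex set with empty interior lies in an affine hyperplane, and since $\wt\K_t$ is a cone through the origin it lies in a \emph{linear} hyperplane; choosing $\wt\HH_t=\ker\xi_t$ to be any hyperplane containing $\operatorname{span}\wt\K_t$, so that $\<\xi_t,w>=0$ for all $w\in\wt\K_t$, and replacing $\xi_t$ by $-\xi_t$ if necessary so that $\<\xi_t,\wt\pa_{q_0}>\geq 0$, we again obtain $\<\xi_t,w>\geq 0$ on $\wt\K_t$ and $\<\xi_t,v>\leq 0$ on $\wt\RR^-_{\wt q(t)}$, so $\wt\HH_t$ separates the two. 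Beyond this bookkeeping the proof uses nothing more than the Lemma and the geometry of convex cones.
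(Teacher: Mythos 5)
Your argument is correct and is essentially the paper's own route: the paper deduces this proposition directly from the preceding lemma together with "basic facts about separation of convex sets" (citing Pontryagin), which is exactly what you do, only with the details spelled out. Your handling of the two standard points — upgrading "not contained in the interior" to "disjoint from the interior" via scaling invariance of the open cone, and the degenerate case $\operatorname{int}\wt\K_t=\emptyset$ (where the separation is trivial, as the paper itself allows) — is sound and consistent with how the hyperplanes are used later in the paper.
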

Thus one can choose a curve of hyperplanes\footnote{By choosing $\wt\K_0:=\{0\}$ we can easily extend $\wt\HH_t$ to the whole interval $[0,T]$.} $\wt\HH_t\subset\T_{\wt q(t)}\wt Q$ separating the cone $\wt\K_t$ from the ray $\wt\RR^-_{\wt q(t)}$ for each $t\in (0,T]$. Because of \eqref{eqn:evolution_cone} and the fact that the canonical vector field $\wt\pa_{q_0}$ is invariant under $\T \Flow{t\tau}$ (the control does not depend on the cost), we may choose $\wt\HH_t$ in such a way that 
\begin{equation}
\label{eqn:evolution_H}
\T \Flow{t\tau}(\wt \HH_\tau)=\wt \HH_t\ ,
\end{equation}
for each $0\leq \tau\leq t\leq T$. Indeed, the basic idea is to choose any $\wt\HH_T$ separating $\wt\K_T$ from $\wt\RR^{-}_{\wt q(T)}$ and define $\wt\HH_t:=\T \Flow{Tt}^{-1}(\wt\HH_T)$ for every $t\in[0,T]$. We leave the reader to check that such a construction has the desired properties. 

The geometry of this situation is depicted in Figure \ref{fig:pmp}.
\begin{figure}[ht]%
\begin{center}
\def\svgwidth{0.9 \columnwidth}
\input{pmp_fig.tex}
\caption{Geometrically the PMP describes a family of cones $\wt\K_t$ along the optimal solution $\wt q(t)$ separated from the direction of the decreasing cost $\wt\RR^-_{\wt q(t)}$ by hyperplanes $\wt\HH_t$. Both $\wt\K_t$ and $\wt\HH_t$ evolve according to the extremal vector field $\wt f_{\wh u(t)}$.}
\label{fig:pmp}%
\end{center}
\end{figure}

\begin{remark}
\label{rem:abnormal}
Trajectories of \eqref{eqn:cs1} satisfying the above necessary conditions for optimality (i.e., the existence of a curve of separating hyperplanes $\wt\HH_t$ which satisfies \eqref{eqn:evolution_H}) can be classified according to the relative position of the hyperplanes $\wt\HH_t$ and the line field $\wt\RR\subset\T\wt Q$. Note that, since the hyperplanes $\wt\HH_t$ evolve according to the TD flow $\Flow{t\tau}$ of the TDVF $\wt f_{\wh u(t)}$, which leaves the distribution $\wt\RR$ invariant, we conclude that whenever $\wt\RR_{\wt q(\tau)}\subset\wt\HH_\tau$ at a particular point $\tau\in[0,T]$, then $\wt\RR_{\wt q(t)}\subset\wt\HH_t$ for every $t\in[0,T]$. We call a trajectory $\wt q(t)$ of \eqref{eqn:cs1} satisfying the above necessary conditions for optimality:
\begin{itemize}
	\item \emph{normal} if $\wt\RR_{\wt q(t)}\not\subset\wt\HH_t$ for any $t\in[0,T]$. Note that, in consequence, the ray $\wt\RR^{-}_{\wt q(t)}$ can be strictly separated from the cone $\wt\K_t$ for each $t\in[0,T]$. 
	\item \emph{abnormal} if $\wt\RR_{\wt q(t)}\subset\wt\HH_t$ for each $t\in[0,T]$.
	\item \emph{strictly abnormal} if for some $t\in[0,T]$ the ray $\wt\RR^-_{\wt q(t)}$ cannot be strictly separated from the cone $\wt\K_t$ (and thus $\bm\RR_{\bm q(t)}\subset\bm\HH_t$ for each $t\in[0,T]$). 
\end{itemize}

It is worth emphasizing that being normal or abnormal is not a property of a trajectory itself, but of a trajectory together with a particular curve of separating hyperplanes. Thus, \emph{a priori}, a given trajectory $\wt q(t)$ may admit two different curves of separating hyperplanes, one being normal and the other abnormal. On the other hand, if $\wt q(t)$ is a strictly abnormal trajectory it must be abnormal (and cannot be normal) for any possible choice of the curve of separating hyperplanes. To justify this  statement observe that if the ray $\wt \RR^-_{\wt q(t)}$ cannot be strictly separated from the cone $\wt\K_t$, then necessarily (since the cones $\wt\K_t$ are closed) we have $-\wt\pa_{q_0}\in\wt\K_t$ for some $t\in(0,T]$. Consequently, also $-\wt\pa_{q_0}\in \wt\HH_t$, as $\wt\HH_t$ separates $-\wt\pa_{q_0}\in\wt\K_t$ and $-\wt\pa_{q_0}\in \wt\RR^-_{\wt q(t)}$ (see Figure \ref{fig:abnormal}). Note that since $\wt\HH_t$ is a linear space, the whole line $\wt\RR_{\wt q(t)}$ spanned by the vector $-\wt\pa_{q_0}$ is contained in $\wt\HH_t$ in this case.  
 
\begin{figure}[ht]%
\begin{center}
\def\svgwidth{0.25 \columnwidth}
\input{abnormal_fig.tex}
\caption{For strictly abnormal trajectories the cone $\wt\K_t$ cannot be strictly separated from the direction of the decreasing cost $\wt\RR^-_{\wt q(t)}$ for some $t\in[0,T]$. Consequently, $\wt\RR_{\wt q(t)}\subset \wt\HH_t$ for each $t\in[0,T]$.}
\label{fig:abnormal}%
\end{center}
\end{figure}
\end{remark}

It is precisely only now when the covector $\wt\lambda(t)$ of the PMP appears. Namely, one can represent each hyperplane $\wt\HH_t\subset\T_{\wt q(t)}\wt Q$ as the kernel of a covector $\wt\lambda(t)\in\T^\ast_{\wt q(t)}\wt Q$. Due to \eqref{eqn:evolution_H} it is possible to choose these covectors in such a way that for every $0\leq \tau\leq t\leq T$ the curve $\wt\Lambda_t=(\wt q(t),\wt\lambda(t))$ satisfies 
$$\wt\Lambda_t=\left(\Flow{t\tau}^{-1}\right)^\ast \wt\Lambda_\tau\ .$$
This reads as the Hamilton equation \eqref{eqn:pmp_evolution_ham} in Theorem \ref{thm:pmp_hamiltonian}. Finally, the Maximum Principle \eqref{eqn:max_princip} follows from  the fact that $\wt\Lambda_t$ can be chosen in such a way that $\<\wt\lambda(t),-\wt\pa_{q_0}>\geq0\geq \<\wt\lambda(t),\wt k>$ for any $\wt k\in \wt\K_t$. The latter inequality for $\wt k=\wt f_v(\wt q(t))-\wt f_{\wh u(t)}(\wt q(t))$ regarded for each $v\in U$ implies \eqref{eqn:max_princip}. 

Note that, as we have already observed in Remark \ref{rem:abnormal}, for abnormal solutions, we have $\wt\pa_{q_0}\in \wt\HH_t=\ker\wt\lambda(t)$, and thus $\<\wt\lambda(t),\wt\pa_{q_0}>\equiv 0$. For normal solutions it is possible to choose $\wt\lambda(t)$ in such a way that $\<\wt\lambda(t),-\wt\pa_{q_0}>\equiv 1$ along the optimal solution.

\paragraph{The contact formulation of the PMP.}
Expressing the essential geometric information of the PMP (see Figure \ref{fig:pmp}) in terms of hyperplanes $\wt\HH_t$, instead of covectors $\wt\lambda(t)$, combined with our considerations about the canonical contact structure on $\PP(\T^\ast M)$ (see Section \ref{sec:ptm}) allows to formulate the following contact version of the PMP. 

\begin{theorem}[the PMP, a contact version]
\label{thm:pmp_contact}
Let $(\wt q(t),\wh u(t))$ be a solution of the \eqref{eqn:ocp}. Then for each $t\in [0,T]$ there exists a hyperplane $\wt\HH_t\in \PP(\T^\ast_{\wt q(t)}\wt Q)$ such that the curve $t\mapsto \wt\HH_t$ satisfies the equation
\begin{equation}
\label{eqn:pmp_evolution_contact}
\dot {\wt\HH_t}=\cont{\wt f_{\wh u(t)}}(\wt\HH_t)\ , 
\end{equation}
where $\cont{\wt f_{\wh u(t)}}$ denotes the contact TDVF on $\PP(\T^\ast \wt Q)$, being the contact lift of the TDVF $\wt f_{\wh u(t)}$ on $\wt Q$ (see Definition \ref{def:PX} and Lemma \ref{lem:PX_contact}). 

Moreover, each  $\wt\HH_t$ separates the convex cone $\wt\K_t$ defined by \eqref{eqn:cone} from the ray $\wt\RR^{-}_{\wt q(t)}$. 
\end{theorem}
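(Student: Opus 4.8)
The plan is to reinterpret the classical proof of the PMP, already sketched in the excerpt, through the contact geometry of $\PP(\T^\ast\wt Q)$. The statement is essentially a translation: the standard PMP (Theorem~\ref{thm:pmp_hamiltonian}) produces a curve of separating hyperplanes $\wt\HH_t\subset\T_{\wt q(t)}\wt Q$ satisfying the invariance property \eqref{eqn:evolution_H}, namely $\T\Flow{t\tau}(\wt\HH_\tau)=\wt\HH_t$, and the content of Theorem~\ref{thm:pmp_contact} is that this transport law is exactly the flow of the contact lift $\cont{\wt f_{\wh u(t)}}$ on the manifold of contact elements. So the proof should proceed in three steps.

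\textbf{Step 1: extract the curve of hyperplanes.} Invoke the sketch of the proof of the PMP given above: for a solution $(\wt q(t),\wh u(t))$ of \eqref{eqn:ocp}, the needle-variation cones $\wt\K_t\subset\T_{\wt q(t)}\wt Q$ are closed convex cones ordered by the TD flow (property \eqref{eqn:evolution_cone}), and by the separation proposition there is a hyperplane $\wt\HH_t$ separating $\wt\K_t$ from $\wt\RR^-_{\wt q(t)}$ for each $t\in(0,T]$ (extended to $t=0$ by $\wt\K_0=\{0\}$). Moreover, as explained following \eqref{eqn:evolution_cone}, one may choose the family $\{\wt\HH_t\}$ coherently so that \eqref{eqn:evolution_H} holds — concretely by fixing $\wt\HH_T$ and setting $\wt\HH_t:=\T\Flow{Tt}^{-1}(\wt\HH_T)=\T\Flow{tT}(\wt\HH_T)$. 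This gives simultaneously the separation property (the last assertion of the theorem) and the transport law \eqref{eqn:evolution_H}, which is the whole substance; the remaining work is purely a matter of identifying \eqref{eqn:evolution_H} with \eqref{eqn:pmp_evolution_contact}.

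\textbf{Step 2: identify the transport law with a contact flow.} Regard $\{\wt\HH_t\}$ as a curve in $\PP(\T^\ast\wt Q)$, the manifold of contact elements, via $\wt\HH_t\leftrightarrow[\wt\lambda(t)]$ with $\wt\HH_t=\ker\wt\lambda(t)$. By Definition~\ref{def:PF} the diffeomorphism $\Flow{t\tau}$ of $\wt Q$ has a contact lift $\proj{\Flow{t\tau}}:\PP(\T^\ast\wt Q)\ra\PP(\T^\ast\wt Q)$ which by definition sends a hyperplane $\wt\HH$ to $\T\Flow{t\tau}(\wt\HH)$. Hence \eqref{eqn:evolution_H} says precisely that $\wt\HH_t=\proj{\Flow{t\tau}}(\wt\HH_\tau)$ for all $0\le\tau\le t\le T$, i.e. $t\mapsto\wt\HH_t$ is an integral curve of the TD flow $\proj{\Flow{t\tau}}$ on $\PP(\T^\ast\wt Q)$. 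Since $\wt f_{\wh u(t)}$ is a \nice\ TDVF and the contact lift preserves composition (as noted after Definition~\ref{def:PX}), the family $\proj{\Flow{t\tau}}$ satisfies the TD-flow axioms \eqref{eqn:t_flow}, consists of contact transformations (Lemma~\ref{lem:PF_contact}), and depends on $t$ with the ACB regularity inherited from $\Flow{t\tau}$; by Lemma~\ref{lem:exist_TDVF} together with Proposition~\ref{prop:contact_evolution} it is therefore the TD flow of a contact TDVF on $\PP(\T^\ast\wt Q)$. That field is, by the discussion surrounding Definition~\ref{def:PX} and Lemma~\ref{lem:PX_contact}, exactly $\cont{\wt f_{\wh u(t)}}$ — the contact lift of $\wt f_{\wh u(t)}$. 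Consequently $t\mapsto\wt\HH_t$ solves $\dot{\wt\HH_t}=\cont{\wt f_{\wh u(t)}}(\wt\HH_t)$, which is \eqref{eqn:pmp_evolution_contact}.

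\textbf{Step 3: assemble.} Combining Steps 1 and 2 yields a curve $t\mapsto\wt\HH_t\in\PP(\T^\ast_{\wt q(t)}\wt Q)$ satisfying \eqref{eqn:pmp_evolution_contact} and separating $\wt\K_t$ from $\wt\RR^-_{\wt q(t)}$, which is the assertion. The only genuine subtlety — and the step I would expect to require the most care — is the regularity bookkeeping in Step~2: one must check that $t\mapsto\proj{\Flow{tt_0}}(\wt\HH)$ is ACB (so that Lemma~\ref{lem:exist_TDVF} applies) and that the contact TDVF one recovers is genuinely $\cont{\wt f_{\wh u(t)}}$ and not merely \emph{some} contact field with the right flow. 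Both points are handled by the material already developed: Lemma~\ref{lem:TDVF_technical} gives smoothness of $\Flow{tt_0}$ in the space variable and ACB dependence on $t$, $\T\Flow{tt_0}$ inherits this, and the passage to $\PP(\T^\ast\wt Q)$ is smooth fibrewise; the identification of the generator with $\cont{\wt f_{\wh u(t)}}$ is immediate from the defining property that the contact lift of a flow is the flow of the contact lift. Everything else is a restatement of the classical PMP and of the constructions of Section~\ref{sec:ptm}, so the proof is short.
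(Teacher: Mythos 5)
Your proposal is correct and follows essentially the same route as the paper's own proof: take the separating hyperplanes with the transport law \eqref{eqn:evolution_H} already produced in the classical PMP argument, read \eqref{eqn:evolution_H} via Definition \ref{def:PF} as evolution under the lifted TD flow $\proj{\Flow{t\tau}}$, and identify its generator with $\cont{\wt f_{\wh u(t)}}$ by the remark following Definition \ref{def:PX} (which is exactly the Lemma \ref{lem:exist_TDVF}/Proposition \ref{prop:contact_evolution} argument you spell out). The only difference is that you make explicit the regularity bookkeeping that the paper leaves to that remark.
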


\begin{proof}
The family of hyperplanes $\wt\HH_t$ separating the cone $\wt\K_t$ from the ray $\wt\RR^{-}_{\wt q(t)}$ and satisfying \eqref{eqn:evolution_H} was constructed in the course of the proof of Theorem \ref{thm:pmp_hamiltonian} sketched in the previous paragraph. To end the proof it is enough to check that $\wt\HH_t$ evolves according to \eqref{eqn:pmp_evolution_contact}. From \eqref{eqn:evolution_H} and Definition \ref{def:PF} of the contact lift  we know that $\wt\HH_t$ evolves according to $\PP(\Flow{\tau t})$. By the remark following Definition \ref{def:PX} this is precisely the TD flow induced by the TDVF $\cont{\wt f_{\wh u(t)}}$.    
\end{proof}

Let us remark that the contact dynamics \eqref{eqn:pmp_evolution_contact} are valid regardless of the fact whether the considered solution is normal or abnormal. We have a unique contact TDVF $\cont{\wt f_{\wh u(t)}}$ on $\PP(\T^\ast \wt Q)$ governing the dynamics of the separating hyperplanes $\wt\HH_t$. The difference between normal and abnormal solutions lies in the relative position of the hyperplanes $\wt\HH_t$ with respect to the canonical vector field $-\wt\pa_{q_0}$ on $\wt Q$. 

\begin{remark}
\label{rem:pmp_covariant}
Actually, the fact that the evolution of $\wt\HH_t$ is contact (and at the same time that the evolution of $\wt\Lambda_t$ is Hamiltonian) is in a sense ``accidental''. Namely, it is merely a natural contact (Hamiltonian) evolution induced on $\PP(\T^\ast\wt Q)$ (on $\T^\ast \wt Q$) by the TD flow on $\wt Q$ defined by means of the extremal vector field. In the Hamiltonian case this was, of course, already observed -- see, e.g., Chapter 12 in \cite{Agrachev_Sachkov_2004}. Thus it is perhaps more proper to speak rather about \emph{covariant} (in terms of hyperplanes) and \emph{contravariant} (in terms of covectors) \emph{formulations of the PMP}, than about its contact and Hamiltonian versions. It may seem that the choice between one of these two approaches is a matter of a personal taste, yet obviously the covariant formulation is closer to the original geometric meaning of the PMP, as it contains a direct information about the separating hyperplanes, contrary to the contravariant version where this information is translated to the language of covectors (not to forget the non-uniqueness of the choice of $\wt\Lambda_t$). In the next Section \ref{sec:appl} we shall show a few applications of the covariant approach to the sub-Riemannian geometry. Expressing the optimality in the language of hyperplanes allows to see a direct relation between abnormal extremals and special directions in the constraint distribution. It also provides an elegant geometric characterization of normal extremals.    
\end{remark}

Although equation \eqref{eqn:pmp_evolution_contact} has a very clear geometric interpretation it is more convenient to avoid, in applications, calculating the contact lift. Combining \eqref{eqn:evolution_H} with Theorem \ref{thm:flow_bracket} allows to substitute equation \eqref{eqn:pmp_evolution_contact} by a simple condition involving the Lie bracket.

\begin{theorem}[the PMP, a covariant version]
\label{thm:pmp_covariant}
Let $(\wt q(t),\wh u(t))$ be a solution of \eqref{eqn:ocp}. Then for each $t\in [0,T]$ there exists a hyperplane $\wt\HH_t\in \PP(\T^\ast_{\wt q(t)}\wt Q)$ such that the curve $t\mapsto \wt\HH_t$ satisfies the equation \eqref{eqn:evolution_H}. Equivalently, $\wt\HH_t$ is a curve of hyperplanes which is a \sexy\ distribution that is $\wt f_{\wh u(t)}$-invariant along $\bm q(t)$, i.e., 
\begin{equation}
\label{eqn:pmp_bracket}
[\wt f_{\wh u(t)},\wt\HH_t]_{\bm q(t)}\subset\wt\HH_t\quad \text{for a.e. $t\in[0,T]$}\ . 
\end{equation}

Moreover, each  $\wt\HH_t$ separates the convex cone $\wt\K_t$ defined by \eqref{eqn:cone} from the ray $\wt\RR^{-}_{\wt q(t)}$. 
\end{theorem}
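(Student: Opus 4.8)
The plan is to deduce the statement directly from the proof of Theorem~\ref{thm:pmp_hamiltonian} sketched above together with the equivalence of Theorem~\ref{thm:flow_bracket}, with no new computation. The existence assertion and the final separation claim require no further work: in the course of proving the PMP we already produced, for a solution $(\wt q(t),\wh u(t))$ of \eqref{eqn:ocp}, a curve of hyperplanes $\wt\HH_t\subset\T_{\wt q(t)}\wt Q$ separating the cone $\wt\K_t$ from the ray $\wt\RR^-_{\wt q(t)}$ and transported by the TD flow $\Flow{t\tau}$ of the extremal TDVF $\wt f_{\wh u(t)}$, i.e., satisfying \eqref{eqn:evolution_H}. (Recall the recipe: pick any $\wt\HH_T$ separating $\wt\K_T$ from $\wt\RR^-_{\wt q(T)}$ and set $\wt\HH_t:=\T\Flow{Tt}^{-1}(\wt\HH_T)$; the $\T\Flow{t\tau}$-invariance of $\wt\pa_{q_0}$ combined with \eqref{eqn:evolution_cone} makes the separation property propagate in $t$.) Thus only the ``equivalently'' clause remains.

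The second step is to recognize that, for a curve $t\mapsto\wt\HH_t$ of hyperplanes along $\bm q(t)$, condition \eqref{eqn:evolution_H} is nothing but condition (a) of Theorem~\ref{thm:flow_bracket}, read with $M=\wt Q$, the \nice\ TDVF $X_t=\wt f_{\wh u(t)}$ (which is \nice\ by the technical hypotheses recalled in Section~\ref{sec:pmp}), its ACB integral curve $x(t)=\bm q(t)$, the TD flow $\A{t\tau}=\Flow{t\tau}$, and $\B=\wt\HH_t$. Theorem~\ref{thm:flow_bracket} then yields the equivalence with its condition (b): $\B$ is a \sexy\ distribution, $\wt f_{\wh u(t)}$-invariant and of constant rank along $\bm q(t)$. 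The one point worth spelling out is that the constant-rank requirement is automatic in our setting, since $\wt\HH_t$ is a hyperplane for every $t$ and hence has constant rank $\dim\wt Q-1$; so condition (b) collapses exactly to ``$\wt\HH_t$ is \sexy\ and $\wt f_{\wh u(t)}$-invariant along $\bm q(t)$'', the invariance being precisely \eqref{eqn:pmp_bracket}.

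There is essentially no difficult step: all the substance is already contained in Theorem~\ref{thm:flow_bracket} and in the needle-variation machinery of the PMP. The only thing calling for a line of care is the applicability of the ``$\Leftarrow$'' direction of Theorem~\ref{thm:flow_bracket}, which presupposes that $\B=\wt\HH_t$ is \sexy; but the $\wt\HH_t$ constructed in the first step is $\Flow{t\tau}$-invariant along $\bm q(t)$ by its very definition, hence automatically \sexy\ by the second item of Proposition~\ref{prop:sexy_examples}. Collecting these observations finishes the proof.
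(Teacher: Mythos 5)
Your proposal is correct and follows essentially the same route as the paper: the existence and separation assertions are inherited from the construction of the $\Flow{t\tau}$-invariant separating hyperplanes in the proof of the Hamiltonian PMP, and the ``equivalently'' clause is exactly Theorem~\ref{thm:flow_bracket} applied to $\B=\wt\HH_t$ along $\bm q(t)$, with charmingness supplied by Proposition~\ref{prop:sexy_examples}. Your extra remarks (constant rank being automatic for hyperplanes, the direction in which charmingness is needed) only make explicit what the paper leaves implicit.
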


\begin{proof}
The proof is immediate. The existence of separating hyperplanes $\bm \HH_t$ satisfying \eqref{eqn:evolution_H} was already proved in the course of this section. The only part that needs some attention is the justification of equation \eqref{eqn:pmp_bracket}. It follows directly from the $\Flow{t\tau}$-invariance along $\bm q(t)$ of $\bm \HH_t$ and Theorem \ref{thm:flow_bracket}. (Note that $\bm \HH_t$ is \sexy\ along $\bm q(t)$ by Proposition \ref{prop:sexy_examples}.) 
\end{proof}

Finally, let us discuss the description of the contact dynamics \eqref{eqn:pmp_evolution_contact} in terms of natural contact forms introduced in the proof of Lemma \ref{lem:PTM_contact}. Recall that a choice of a vector field $\wt R\in \X(\wt Q)$ allows to define a natural embedding of the open set $\U_{\wt R}=\{[\wt \theta]\ |\ \wt\theta(\wt R)\neq 0\}\subset\PP(\T^\ast \wt Q)$ into $\T^\ast \wt Q$ (recall that in the language of hyperplanes, the set $\U_{\wt R}$ consists of those hyperplanes in $\T\wt Q$ which are transversal to the field $\wt R$). What is more, the pullback $\omega_{\wt R}$ of the Liouville form $\Omega_{\wt Q}$ on $\T^\ast\wt Q$, is a contact form on $\U_{\wt R}$. By the comment of Remark \ref{rem:PX_generator}, the generating function of the CVF $\cont{\wt f_{\wh u(t)}}$ associated with the contact form $\omega_{\wt R}$ is simply 
$$\PP(\T^\ast\wt Q)\supset\U_{\wt R}\ni (\wt q,[\wt\lambda])\longmapsto \<\wt\lambda, \wt f_{\wh u(t)}(\wt q) >\overset{\eqref{eqn:hamiltonian}}=\wt H_t(\wt q,\wt\lambda)\in\R\ ,$$
where $\wt\lambda$ is a representative of the class $[\wt\lambda]$ such that $\<\wt\lambda,\wt R>=1$.  In other words, the generating function of the contact dynamics \eqref{eqn:pmp_evolution_contact} associated with $\omega_{\wt R}$ is precisely the linear Hamiltonian \eqref{eqn:hamiltonian}. 
 
In particular, by choosing $\wt R=\wt\pa_{q_0}$ we can easily recover the results of \cite{Ohsawa_contact_pmp_2015}. Note that $\wt R=\wt\pa_{q_0}$ is the canonical choice of a vector field transversal to all hyperplanes $\wt\HH_t$'s in the normal case (note that additionally $\wt R=\wt\pa_{q_0}$ is $\Flow{t\tau}$-invariant). For such a choice of $\wt R$, the corresponding embedding $i_{\wt R}:\U_{\wt R}\hookrightarrow \T^\ast \wt Q$ is constructed simply by setting $\<\wt \lambda,\wt\pa_{q_0}>=1$, which is just the standard normalization of the normal solution. The associated contact form is $\omega_{\wt R}=\pi_2^\ast\dd q_0+\pi_1^\ast\Omega_Q$, where $\Omega_Q$ is the Liouville form on $\T^\ast Q$ and $\pi_1:Q\times \R\ra Q$ and $\pi_2:Q\times \R\ra \R$ are natural projections. As observed above, the generating function of the contact dynamics associated with $\omega_{\wt R}$ is the linear Hamiltonian \eqref{eqn:hamiltonian}.  
This stays in a perfect agreement with the results of Section 2 in \cite{Ohsawa_contact_pmp_2015}.

For the abnormal case there is no canonical choice of the field $\wt R$ transversal to the separating planes. Yet locally such a choice (but not canonical) is possible. The resulting generating function of the contact dynamics \eqref{eqn:pmp_evolution_contact} is again the linear Hamiltonian \eqref{eqn:hamiltonian}.

\section{Applications to the sub-Riemannian geodesic problem}
\label{sec:appl}

In this section we shall apply our covariant approach to the PMP (cf. Remark \ref{rem:pmp_covariant}) to concrete problems of optimal control. We shall concentrate our attention on the sub-Riemannian (SR, in short) geodesic problem on a manifold $Q$. Our main idea is to extract, from the geometry of the cone $\wt\K_t$, as much information as possible about the separating hyperplane $\wt\HH_t$ and then use the contact evolution (in the form \eqref{eqn:evolution_H} or \eqref{eqn:pmp_bracket}) to determine the actual extremals of the system. 

\paragraph{A sub-Riemannian geodesic problem.} To be more precise we are considering a control system constituted by choosing in the tangent space $\T Q$ a smooth constant-rank distribution $\D\subset \T Q$. Clearly (locally and non-canonically), by taking $f(q,u)=\sum_{i=1}^d u^if_i(q)$, where $u=(u^1,u^2,\hdots,u^d)$ and $\D=\<f_1,\hdots,f_d>$,  we may present $\D$ as the image of a map $f:Q\times U\ra \T Q$ where $U=\R^d$, with  $d:=\rank\D$, is an Euclidean space, i.e., a control system of type \eqref{eqn:cs}.  We shall refer to it as to the \emph{SR control system}. In agreement with our notation from the previous Section~\ref{sec:pmp} we will write also $f_u(q)$ instead of $f(q,u)\in \D_q$.  

The \emph{SR geodesic problem} is an optimal control problem of the form \eqref{eqn:ocp} constituted by considering a cost function 
$L(q,u):=\frac 12 g(f_u(q),f_u(q))$, where 
$$g:\D\times\D\ra\R$$
is a given symmetric positively-defined bi-linear form (\emph{SR metric}) on $\D$. 

In the considered situation, after passing to the extended configuration space $\wt Q=Q\times\R\ni(q,q_0)=\wt q$, the extended control function $\wt f:\wt Q \times U\ra \T\wt Q=\T Q\times\T\R$ is simply 
$$\wt f(\wt q,u)=\bm f_{u}(\bm q)=f_u(q)+\frac 12g(f_u(q),f_u(q))\pa_{q_0}\ .$$

\begin{definition}\label{def:SR_geodesic}
By a \emph{SR extremal} we shall understand a trajectory $(\wt q(t),\wh u(t))$ of the SR control system satisfying the necessary conditions for optimality given by the PMP (in the form provided by Theorem \ref{thm:pmp_hamiltonian} or, equivalently, Theorem \ref{thm:pmp_contact} or Theorem \ref{thm:pmp_covariant}).
\end{definition}

\paragraph{The geometry of cones and separating hyperplanes.}
Observe that the image $\wt f(\wt q,U)\subset\T_{\wt q}\wt Q=\T_qQ\times\T_{q_0}\R$ is a paraboloid (see Figure \ref{fig:paraboloid}). The following fact is a simple consequence of \eqref{eqn:cone}.

\begin{figure}[ht]%
\begin{center}
\def\svgwidth{0.6 \columnwidth}
\input{graph_fig.tex}
\caption{In the sub-Riemannian case the image $\wt f(\wt q,U)$ is a paraboloid in $\T_{\wt q}\wt Q$.} 
\label{fig:paraboloid}
\end{center}
\end{figure}

\begin{lemma}
\label{lem:K_subspace}
Let $(\wt q(t),\wh u(t))$ be a trajectory of the SR control system and let $\wt\K_t$ be the associated  convex cone defined by formula \eqref{eqn:cone}. Then $\wt\K_t$ contains the tangent space of the paraboloid $\wt f(\wt q(t),U)$ at $\wt f_{\wh u(t)}(\wt q(t))$, i.e.,
\begin{equation}
\label{eqn:T_cone}
\{Y+g(f_{\wh u(t)},Y)\pa_{q_0}\ |\ Y\in \D_{q(t)}\}\subset\wt\K_t\ .
\end{equation}
\end{lemma}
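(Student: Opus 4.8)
The plan is to recognize the set on the left of \eqref{eqn:T_cone} as the tangent space of the paraboloid $\wt f(\wt q(t),U)$ at the base point $\wt f_{\wh u(t)}(\wt q(t))$, and then to produce that tangent space inside $\wt\K_t$ by rescaling the secant vectors which the needle--variation construction \eqref{eqn:cone} already places in $\wt\K_t$.

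First I would make the geometry of the paraboloid explicit. Since $u\mapsto f_u(q(t))$ is a linear surjection of $U=\R^d$ onto $\D_{q(t)}$, the image $\wt f(\wt q(t),U)$ equals
\[
\{\,Z+\tfrac12 g(Z,Z)\,\pa_{q_0}\ \mid\ Z\in\D_{q(t)}\,\}\subset\T_{\wt q(t)}\wt Q,
\]
i.e.\ it is the graph of the quadratic map $Z\mapsto\tfrac12 g(Z,Z)$ over the linear space $\D_{q(t)}$. Writing $Y_0:=f_{\wh u(t)}(q(t))\in\D_{q(t)}$, so that $\wt f_{\wh u(t)}(\wt q(t))=Y_0+\tfrac12 g(Y_0,Y_0)\,\pa_{q_0}$, differentiating this graph parametrization at $Y_0$ along a direction $Y\in\D_{q(t)}$ gives $Y+g(Y_0,Y)\,\pa_{q_0}$ (here the symmetry of $g$ is used). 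Hence the tangent space of the paraboloid at $\wt f_{\wh u(t)}(\wt q(t))$ is exactly the left-hand side of \eqref{eqn:T_cone}, and it remains to show that every such vector belongs to $\wt\K_t$.

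For a regular point $t$ of $\wh u(\cdot)$ this follows from a single needle variation. Fix $Y\in\D_{q(t)}$, and for each $s>0$ pick, using the surjectivity of $u\mapsto f_u(q(t))$ onto $\D_{q(t)}$, a value $v_s\in U$ with $f_{v_s}(q(t))=Y_0+sY$. Taking $k=1$, $t_1=t$, $v_1=v_s$ and $\delta t_1=1/s$ in \eqref{eqn:cone}, and using $\Flow{tt}=\id$, we obtain $\tfrac1s\big[\wt f_{v_s}(\wt q(t))-\wt f_{\wh u(t)}(\wt q(t))\big]\in\wt\K_t$, while a direct computation gives
\[
\frac1s\big[\wt f_{v_s}(\wt q(t))-\wt f_{\wh u(t)}(\wt q(t))\big]=Y+g(Y_0,Y)\,\pa_{q_0}+\frac{s}{2}\,g(Y,Y)\,\pa_{q_0}.
\]
Letting $s\to0^{+}$ and using that $\wt\K_t$ is closed yields $Y+g(Y_0,Y)\,\pa_{q_0}\in\wt\K_t$; since $Y\in\D_{q(t)}$ was arbitrary, \eqref{eqn:T_cone} holds at every regular $t$.

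For a non-regular $t$ I would pass to the extension $\wt\K_t=\bigcup_{\tau\le t\ \text{regular}}\T\Flow{t\tau}(\wt\K_\tau)$: for each regular $\tau<t$ the case just treated together with \eqref{eqn:evolution_cone} places $\T\Flow{t\tau}\big(\{Z+g(f_{\wh u(\tau)}(q(\tau)),Z)\,\pa_{q_0}\mid Z\in\D_{q(\tau)}\}\big)$ in $\wt\K_t$, and one then lets $\tau\uparrow t$, using continuity of $\tau\mapsto\Flow{t\tau}$, of $\D$ along $q(\cdot)$, and of the trajectory $\wt q(\cdot)$. I expect this limiting step at non-regular times to be the only genuinely delicate point: since $\wh u$ is merely measurable, matching $f_{\wh u(\tau)}(q(\tau))$ with $Y_0$ in the limit needs some care (one may instead work with the a.e.\ value $f_{\wh u(\tau)}(q(\tau))=\dot{\wt q}(\tau)$ and with properties of ACB curves). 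Together with the elementary but essential appeal to the closedness of $\wt\K_t$ in the limit $s\to0^{+}$, this is the whole of the difficulty — the geometric content, namely that the secants of the paraboloid supplied by the needle variations converge to its tangent plane, is otherwise immediate from \eqref{eqn:cone}.
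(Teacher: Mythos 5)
Your proof is correct and takes essentially the same route as the paper: both place the secant vectors in $\wt\K_t$ via \eqref{eqn:cone} with $k=1$, $t_1=t$, and pass to the tangent space by a limit using closedness of the cone, with your version merely making explicit the rescaling $\delta t_1=1/s$ and the tangent-space computation that the paper leaves as an easy exercise. Your additional discussion of non-regular $t$ goes beyond the paper's proof (which, like your main argument, works at points where \eqref{eqn:cone} applies directly) and is not required for the lemma as stated.
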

\begin{proof}
It follows from \eqref{eqn:cone} (after taking $k=1$, $t_1=t$, and thus $\Flow{tt_1}=\id_{\wt Q}$) that $\wt\K_t$ contains every secant ray $\R_+\cdot\{\bm{f}_v(\bm q(t))- \bm {f}_{{\wh u}(t)}(\bm q(t))\}$ of the paraboloid $\wt f(\wt q(t),U)=\{\bm f_v(\wt q(t))\ |\ f_v(q(t))\in\D_{q(t)}\}$ passing through the point $\bm {f}_{{\wh u}(t)}(\bm q(t))$. Using these secant rays we may approximate every tangent ray of the paraboloid  $\wt f(\wt q(t),U)$ passing through $\bm {f}_{{\wh u}(t)}(\bm q(t))$ with an arbitrary accuracy. Since $\wt\K_t$ is closed, it has to contain this tangent ray and, consequently, the whole tangent space of $\wt f(\wt q(t),U)$ at $\bm {f}_{{\wh u}(t)}(\bm q(t))$ (see Figure \ref{fig:tangent}). The fact that this tangent space is described by equality \eqref{eqn:T_cone} is an easy exercise. 
\end{proof}

\begin{figure}[ht]%
\begin{center}
\def\svgwidth{0.6 \columnwidth}   
\input{tangent_fig.tex}
\caption{Since the cone $\wt\K_t$ contains all secant rays $\R_+\cdot\left\{\bm f_v({\wt q})- \bm f_{\wh u(t)}(\wt q)\right\}$ and is closed, it must contain the tangent space $\T_{\bm f_{\wh u(t)}(\wt q)}\wt f(\wt q,U)$.}
\label{fig:tangent}%
\end{center}
\end{figure}

\begin{remark}
\label{rem:tangen_gen_syst}
 In general, for an arbitrary control system and an arbitrary cost function, the cone $\wt \K_t$ contains all secant rays of the image $\wt f(\wt q(t),U)$ passing through $\wt f_{\wh u(t)}(\wt q(t))$. Thus, after passing to the limit, the whole tangent cone to $\wt f(\wt q(t),U)$ at $\wt f_{\wh u(t)}(\wt q(t))$ is contained in $\wt\K_t$. If $\wt f(\wt q(t),U)$ is a submanifold, as it is the case in the SR geodesic problem, this tangent cone is simply the tangent space at $\wt f_{\wh u(t)}(\wt q(t))$.
\end{remark}

Here is an easy corollary from the above lemma and our previous considerations. 
\begin{lemma}
\label{prop:dim_H}
Let $(\wt q(t), \wh u(t))$ be a SR extremal and let $\wt\HH_t\subset\T_{\wt q(t)}\wt Q$ be a curve of separating hyperplanes described in Theorem \ref{thm:pmp_covariant}. Then for each $t\in[0,T]$, the hyperplane $\wt\HH_t$ contains a $\rank\D$-dimensional linear subspace 
$$\{Y+g(f_{\wh u(t)},Y)\pa_{q_0}\ |\ Y\in \D_{q(t)}\}\subset\bm \HH_t\ .$$

If, additionally, $(\wt q(t), \wh u(t))$ is an abnormal SR extremal, then for each $t\in [0,T]$ there exists a hyperplane $\HH_t\subset\T_{q(t)} Q$ containing $\D_{q(t)}$, and such that the curve $t\mapsto \HH_t$ along $q(t)$ is subject to the evolution equation
\begin{equation}
\label{eqn:H_evolution_SR_Q_integral}
\T  \flow{t\tau}(\HH_\tau)= \HH_t\ ,\quad\text{for each $0\leq \tau\leq t\leq T$}.
\end{equation}
Here $\flow{t\tau}$ denotes the TD flow of a \nice\ TDVF $f_{\wh u(t)}$.   

Equivalently, $\HH_t$ is a curve of hyperplanes containing $\D_{q(t)}$ which is a \sexy\ distribution that is $f_{\wh u(t)}$-invariant  along $q(t)$, i.e., 
\begin{equation}
\label{eqn:H_evolution_SR_Q}
[f_{\wh u(t)},\HH_t]_{q(t)}\subset\HH_t \quad \text{for a.e. $t\in[0,T]$}, 
\end{equation}
\end{lemma}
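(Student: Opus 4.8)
Proof proposal. The two assertions amount to projecting the separation picture of Theorem~\ref{thm:pmp_covariant} down to $Q$, and I would organize the argument as follows. For the first inclusion the plan is to combine Lemma~\ref{lem:K_subspace} with the sign normalization under which the separating hyperplane was produced in the proof of Theorem~\ref{thm:pmp_hamiltonian}. Write $\wt\HH_t=\ker\wt\lambda(t)$, where $\wt\lambda(t)$ is chosen so that $\<\wt\lambda(t),\wt k>\le 0$ for every $\wt k\in\wt\K_t$. By Lemma~\ref{lem:K_subspace} the \emph{linear} subspace $V_t:=\{Y+g(f_{\wh u(t)},Y)\pa_{q_0}\ |\ Y\in\D_{q(t)}\}$ is contained in $\wt\K_t$, hence in the closed half-space $\{\wt\lambda(t)\le 0\}$. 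A linear subspace contained in a closed half-space must lie in its boundary hyperplane: if $\<\wt\lambda(t),v>\ne 0$ for some $v\in V_t$, then $-v\in V_t\subset\wt\K_t$ violates the inequality. Thus $V_t\subset\ker\wt\lambda(t)=\wt\HH_t$, and since $Y\mapsto Y+g(f_{\wh u(t)},Y)\pa_{q_0}$ is injective we get $\dim V_t=\rank\D$, which is exactly the first displayed formula.

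For the abnormal case the plan is to set $\HH_t:=\T\pi_1(\wt\HH_t)\subset\T_{q(t)}Q$, where $\pi_1:\wt Q=Q\times\R\to Q$. By the definition of abnormality (Remark~\ref{rem:abnormal}) we have $\wt\pa_{q_0}\in\wt\HH_t$ for every $t$; since $\wt\HH_t$ is a hyperplane in $\T_{\wt q(t)}\wt Q=\T_{q(t)}Q\oplus\R\wt\pa_{q_0}$ containing $\wt\pa_{q_0}$, the restriction of $\T\pi_1$ to $\wt\HH_t$ has one-dimensional kernel $\R\wt\pa_{q_0}$, so $\HH_t$ is a hyperplane in $\T_{q(t)}Q$ and $\wt\HH_t=\HH_t\oplus\R\wt\pa_{q_0}$. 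Applying $\T\pi_1$ to $V_t\subset\wt\HH_t$ gives $\D_{q(t)}\subset\HH_t$. For the evolution equation \eqref{eqn:H_evolution_SR_Q_integral} I would invoke \eqref{eqn:evolution_H}, i.e.\ $\T\Flow{t\tau}(\wt\HH_\tau)=\wt\HH_t$, together with the fact that $\Flow{t\tau}$ covers $\flow{t\tau}$ (the $Q$-component of $\wt f_{\wh u(t)}$ does not depend on $q_0$), so that $\pi_1\circ\Flow{t\tau}=\flow{t\tau}\circ\pi_1$ and hence $\T\pi_1\circ\T\Flow{t\tau}=\T\flow{t\tau}\circ\T\pi_1$; evaluating this identity on $\wt\HH_\tau$ and using $\T\pi_1(\wt\HH_\bullet)=\HH_\bullet$ yields $\T\flow{t\tau}(\HH_\tau)=\HH_t$.

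It then remains to pass from \eqref{eqn:H_evolution_SR_Q_integral} to the bracket form \eqref{eqn:H_evolution_SR_Q}. Here the plan is: $\HH_t$ is of constant rank $\dim Q-1$ along $q(t)$, and being respected by $\flow{t\tau}$ it is \sexy\ by Proposition~\ref{prop:sexy_examples}; Theorem~\ref{thm:flow_bracket}, applied to the distribution $\HH_t$ and the \nice\ TDVF $f_{\wh u(t)}$, then gives the equivalence with $[f_{\wh u(t)},\HH_t]_{q(t)}\subset\HH_t$ for a.e.\ $t$. Given the machinery assembled in Sections~\ref{sec:technical} and~\ref{sec:pmp}, none of these steps is technically involved; the only genuinely load-bearing observation is that $\wt\pa_{q_0}\in\wt\HH_t$ — this is precisely where abnormality enters and it is what guarantees that $\HH_t$ is a bona fide hyperplane of $\T_{q(t)}Q$ — together with the compatibility $\pi_1\circ\Flow{t\tau}=\flow{t\tau}\circ\pi_1$ needed to push the $\Flow{t\tau}$-invariance of $\wt\HH_t$ down to the $\flow{t\tau}$-invariance of $\HH_t$.
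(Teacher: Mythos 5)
Your proposal is correct and follows essentially the same route as the paper: Lemma \ref{lem:K_subspace} plus the separation property force the linear subspace $\{Y+g(f_{\wh u(t)},Y)\pa_{q_0}\}$ into $\wt\HH_t$ (the paper phrases this as a line in a supported cone lying in the supporting hyperplane, you phrase it via the covector inequality — the same observation), and in the abnormal case you project along $\ker\T\pi_1=\R\,\wt\pa_{q_0}\subset\wt\HH_t$, push the $\Flow{t\tau}$-invariance \eqref{eqn:evolution_H} down to \eqref{eqn:H_evolution_SR_Q_integral}, and invoke Proposition \ref{prop:sexy_examples} with Theorem \ref{thm:flow_bracket} for the bracket form, exactly as in the paper's proof.
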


\begin{proof}
To justify the first part of the assertion, observe that if, in a linear space $V$, a hyperplane $\HH\subset V$ supports a cone $\K\subset V$ which contains a line $l\subset\K$ (and all these sets contain the zero vector), then necessarily $l\subset \HH$ (each line containing 0 either intersects the hyperplane or is tangent to it). Since, by Lemma \ref{lem:K_subspace}, $\wt\K_t$ contains a subspace $\{Y+g(f_{\wh u(t)},Y)\pa_{q_0}\ |\ Y\in \D_{q(t)}\}$, we conclude that this subspace must lie in $\wt\HH_t$. 

Assume now that the considered extremal is abnormal. In this case, as we already observed in Remark~\ref{rem:abnormal}, $\wt\HH_t$ contains, in addition to the above-mentioned linear subspace, also the line $\wt\RR_{\wt q(t)}$ and thus we conclude that
$$\D_{q(t)}\oplus\RR_{q_0(t)}\subset\wt\HH_t\ .$$
Since $\{0_q\}\oplus \RR_{q_0}$ is the kernel of the natural projection $\T\pi_1:\T\bm Q\ra \T Q$, we conclude that, for every $t\in[0,T]$, the image $\HH_t$ of $\wt\HH_t$ under this projection is a hyperplane in $\T_{q(t)}Q$ which contains $\D_{q(t)}$. Obviously, since $\bm f_{\wh u(t)}$ projects to $f_{\wh u(t)}$,  equation \eqref{eqn:evolution_H} implies \eqref{eqn:H_evolution_SR_Q_integral}. By Theorem \ref{thm:flow_bracket}, equation \eqref{eqn:H_evolution_SR_Q} is the infinitesimal form of the latter. 
\end{proof}

It turns out that in  some cases the above basic information, suffices to find SR extremals. Let us study the following two examples. 

\begin{example}[Riemannian extremals]
\label{ex:geod}
In the Riemannian case $\D=\T Q$ is the full tangent space and $g$ is a Riemannian metric on $Q$. Let us introduce any connection $\nabla$ on $Q$ compatible with the metric. By $T_\nabla(X,Y):=\nabla_XY-\nabla_YX-[X,Y]$ denote the torsion of $\nabla$ (in particular, if we take the Levi-Civita connection $\nabla=\nabla^{LC}$, then $T_{\nabla^{LC}}\equiv 0$).

In this case $\rank\D=\dim Q$ so a Riemannian extremal cannot be abnormal from purely dimensional reasons: by Lemma \ref{prop:dim_H} in such a case a $(\dim Q-1)$-dimensional hyperplane  $\HH_t\subset\T_{q(t)}Q$ would contain a bigger $(\dim Q)$-dimensional space $\D_{q(t)}=\T_{q(t)}Q$, which is impossible. Thus every Riemannian extremal must be normal and, by Lemma \ref{prop:dim_H}, necessarily
$$\wt\HH_t=\{Y+g(f_{\wh u(t)},Y)\pa_{q_0}\ | \ Y\in\T_{q(t)}Q\}\ .$$

Now any $\wt\HH_t$-valued vector field along $\wt q(t)$ takes the form $Y(t)+g(f_{\wh u(t)},Y(t))\pa_{q_0}$ where $Y(t)\in\T_{q(t)}Q$. Its Lie bracket with the extremal vector field $\bm f_{\wh u(t)}=f_{\wh u(t)}+\frac 12g(f_{\wh u(t)},f_{\wh u(t)})\pa_{q_0}$ is simply (in the derivations we use the property that $Xg(Y,Z)=g(\nabla_XY,Z)+g(Y,\nabla_XZ)$ for every metric-compatible connection)
\begin{align*}
&\left[f_{\wh u(t)}+\frac 12g(f_{\wh u(t)},f_{\wh u(t)})\pa_{q_0},Y+g(f_{\wh u(t)},Y)\pa_{q_0}\right]=\\
&[f_{\wh u(t)},Y]+\left\{f_{\wh u(t)}g(f_{\wh u(t)},Y)-\frac 12Yg(f_{\wh u(t)},f_{\wh u(t)})\right\}\pa_{q_0}=\\
&[f_{\wh u(t)},Y]+\Big\{g(\nabla_{f_{\wh u(t)}}f_{\wh u(t)},Y)+g(f_{\wh u(t)},\nabla_{f_{\wh u(t)}}Y)-g(f_{\wh u(t)},\nabla_Y f_{\wh u(t)})\Big\}\pa_{q_0}=\\
&[f_{\wh u(t)},Y]+g(f_{\wh u(t)},[f_{\wh u(t)}, Y])\pa_{q_0}+\Big\{g(\nabla_{f_{\wh u(t)}}f_{\wh u(t)},Y)+g(f_{\wh u(t)},T_\nabla(f_{\wh u(t)}, Y))\Big\}\pa_{q_0}\ .
\end{align*} 
By \eqref{eqn:pmp_bracket} this Lie bracket should be $\wt\HH_t$-valued, and 
since $[f_{\wh u(t)},Y]+g(f_{\wh u(t)},[f_{\wh u(t)}, Y])\pa_{q_0}$ belongs to $\wt\HH_t$, we conclude that the considered bracket belongs to $\wt\HH_t$ if and only if for any $Y\in\T_{q(t)}Q$ we have
$$g(\nabla_{f_{\wh u(t)}}f_{\wh u(t)},Y)+g(f_{\wh u(t)},T_\nabla(f_{\wh u(t)}, Y))=0\ .$$
In this way we have expressed the geodesic equation for the metric $g$ in terms of the chosen metric-compatible connection $\nabla$ with torsion $T_\nabla$. In case that $\nabla=\nabla^{LC}$ is the Levi-Civita connection, the torsion vanishes and we recover the standard geodesic equation 
$$\overset{LC}\nabla_{f_{\wh u(t)}}f_{\wh u(t)}=0\ .$$ 
\end{example}

\begin{example}
Consider an abnormal SR extremal $(\wt q(t),\wh u(t))$ in a particular case of the SR geodesic problem  where $\D\subset\T Q$ is a co-rank one distribution. By Lemma \ref{prop:dim_H} in such a situation necessarily $\HH_t=\D_{q(t)}$, since the latter space is already of co-dimension one in $\T_{q(t)}Q$. Now \eqref{eqn:H_evolution_SR_Q} gives us
$$[f_{\wh u(t)},\D_{q(t)}]_{q(t)}\subset \D_{q(t)}$$
for almost every $t\in[0,T]$, i.e., in the considered case any abnormal extremal has to be a characteristic curve of $\D$. The converse statement is also true. Indeed, the reader may check that in this case $\bm \HH_t:=\D_{q(t)}\oplus\RR_{q_0(t)}$ is the curve of separating hyperplanes containing $\bm\RR_{\bm q(t)}$ and satisfying the assertion of Theorem \ref{thm:pmp_covariant} (see also the proof of Theorem \ref{thm:abnormal}).  
\end{example}

\noindent In the following two subsections we shall discuss normal and abnormal SR extremals in full generality. 

\subsection{Abnormal SR extremals}
Our previous considerations allow us to give the following characterization of SR abnormal extremals.

\begin{theorem}
\label{thm:abnormal}
For the SR geodesic problem introduced above the following conditions are equivalent:
\begin{enumerate}[(a)]
	\item\label{cond:A_abnormal} The pair $(\wt q(t),\wh u(t))$ is an abnormal SR extremal.
	\item\label{cond:B_abnormal} The smallest distribution $\flow{t\tau}$-invariant along $q(t)$ and containing $\D_{q(t)}$, i.e.,
	$$\flow{\bullet}(\D)_{q(t)}=\vect\{\T \flow{t\tau}(Y)\ |\ Y\in\D_{q(\tau)},\quad 0\leq \tau\leq T\}$$
	is of rank smaller than $\dim Q$. Here $\flow{t\tau}$ denotes the TD flow (in $Q$) of the \nice\ TDVF $f_{\wh u(t)}$.
\end{enumerate}

Moreover, condition \eqref{cond:B_abnormal} depends only on $f_{\wh u(t)}$ and $\D$ along $q(t)$. 
\end{theorem}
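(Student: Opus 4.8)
The plan is to prove the equivalence of \eqref{cond:A_abnormal} and \eqref{cond:B_abnormal} by unwinding the definitions using Lemma \ref{prop:dim_H} and Proposition \ref{prop:AB}, and then to settle the final ``moreover'' clause by an application of Lemma \ref{lem:along_x}.

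First I would prove \eqref{cond:A_abnormal} $\Rightarrow$ \eqref{cond:B_abnormal}. Assume $(\wt q(t),\wh u(t))$ is an abnormal SR extremal. By Lemma \ref{prop:dim_H}, there exists a curve of hyperplanes $\HH_t\subset\T_{q(t)}Q$ with $\D_{q(t)}\subset\HH_t$ which is $\flow{t\tau}$-invariant along $q(t)$, i.e. satisfies \eqref{eqn:H_evolution_SR_Q_integral}. Since $\flow{\bullet}(\D)_{q(t)}$ is, by Proposition \ref{prop:AB}, the \emph{smallest} distribution along $q(t)$ containing $\D$ and respected by the TD flow $\flow{t\tau}$, it must be contained in any such $\HH_t$; hence $\flow{\bullet}(\D)_{q(t)}\subset\HH_t$, and since $\dim\HH_t=\dim Q-1$ we get $\rank\flow{\bullet}(\D)_{q(t)}\leq\dim Q-1<\dim Q$.

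For the converse \eqref{cond:B_abnormal} $\Rightarrow$ \eqref{cond:A_abnormal}, suppose $\flow{\bullet}(\D)_{q(t)}$ has rank $r<\dim Q$ along $q(t)$. I would build a curve of separating hyperplanes witnessing abnormality. Pick at the final time $t=T$ any hyperplane $\HH_T\subset\T_{q(T)}Q$ with $\flow{\bullet}(\D)_{q(T)}\subset\HH_T$ (possible since $\flow{\bullet}(\D)_{q(T)}$ has dimension $r<\dim Q$), and set $\HH_t:=\T\flow{Tt}^{-1}(\HH_T)$ for all $t\in[0,T]$; this curve then satisfies \eqref{eqn:H_evolution_SR_Q_integral} by property \eqref{eqn:t_flow}, and it contains $\D_{q(t)}$ because $\flow{\bullet}(\D)$ is $\flow{t\tau}$-invariant and contains $\D$. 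Next lift to $\wt Q$: define $\wt\HH_t:=\HH_t\oplus\wt\RR_{\wt q(t)}$ (using $\T\pi_1^{-1}$). Since $f_{\wh u(t)}$ lifts to $\bm f_{\wh u(t)}$ and the canonical field $\wt\pa_{q_0}$ is $\Flow{t\tau}$-invariant, this $\wt\HH_t$ satisfies \eqref{eqn:evolution_H}. It remains to check that $\wt\HH_t$ \emph{separates} $\wt\K_t$ from $\wt\RR^-_{\wt q(t)}$ — this is the main obstacle and the only non-formal point. By Lemma \ref{lem:K_subspace}, $\wt\K_t$ contains the subspace $\{Y+g(f_{\wh u(t)},Y)\pa_{q_0}\mid Y\in\D_{q(t)}\}$; one needs to verify that $\wt\HH_t$, which contains both this subspace (because it projects onto $\D_{q(t)}\subset\HH_t$ under $\T\pi_1$ and contains $\wt\RR_{\wt q(t)}$) and the whole line $\wt\RR_{\wt q(t)}$, can be oriented so that the generating needle-variation vectors $\Flow{tt_i}[\wt f_{v_i}-\wt f_{\wh u(t_i)}]$ all lie on the non-positive side while $-\wt\pa_{q_0}$ lies on the non-negative side. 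This is precisely the statement that the original PMP hyperplane at $T$, when chosen compatibly with $\flow{\bullet}(\D)_{q(T)}$, does the job; here I would invoke the construction of the separating hyperplanes from the proof of Theorem \ref{thm:pmp_covariant}, noting that the $\Flow{t\tau}$-invariant extension of any hyperplane separating $\wt\K_T$ from $\wt\RR^-_{\wt q(T)}$ separates $\wt\K_t$ from $\wt\RR^-_{\wt q(t)}$ for all $t$, and observing that among the admissible choices of $\wt\HH_T$ one can always take one containing $\flow{\bullet}(\D)_{q(T)}\oplus\wt\RR_{\wt q(T)}$ since that subspace is itself contained in $\wt\K_T$'s supporting hyperplane (it sits inside the closed convex cone only through its boundary, being a linear subspace annihilated by any $\wt\lambda(T)$ with $\wt\HH_T=\ker\wt\lambda(T)$). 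Thus $\wt\HH_t$ witnesses that $(\wt q(t),\wh u(t))$ is abnormal, proving \eqref{cond:A_abnormal}.

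Finally, for the ``moreover'' clause I would argue as follows. The distribution $\flow{\bullet}(\D)_{q(t)}$ is, by Theorem \ref{thm:flow_bracket} together with Proposition \ref{prop:sexy_examples}, the smallest \sexy\ distribution along $q(t)$ of constant rank that contains $\D_{q(t)}$ and is $f_{\wh u(t)}$-invariant along $q(t)$ in the Lie-bracket sense. Since $\D$ is a smooth constant-rank distribution and $f_{\wh u(t)}$ is a \nice\ $\D$-valued TDVF, Lemma \ref{lem:along_x} applies (with $\B=\flow{\bullet}(\D)$, which contains $\D_{q(t)}$ at every $t$): the property of a \sexy\ distribution containing $\D_{q(t)}$ being $f_{\wh u(t)}$-invariant along $q(t)$ depends only on the values of $f_{\wh u(t)}$ along $q(t)$. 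Hence the minimal such distribution $\flow{\bullet}(\D)_{q(t)}$, and in particular its rank, is determined by $\D$ and the restriction of $f_{\wh u(t)}$ to $q(t)$ alone, which is exactly the assertion.
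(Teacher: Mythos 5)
Your argument for \eqref{cond:A_abnormal}$\Rightarrow$\eqref{cond:B_abnormal} is fine and matches the paper, but the converse contains a genuine gap: the separation step is circular. Under hypothesis \eqref{cond:B_abnormal} alone you know nothing about optimality or extremality of $(\wt q(t),\wh u(t))$, so there is no ``original PMP hyperplane at $T$'' to invoke — the existence of a hyperplane separating $\wt\K_T$ from $\wt\RR^-_{\wt q(T)}$ is precisely what being an extremal means (Definition \ref{def:SR_geodesic}), i.e.\ exactly what you are trying to prove. Likewise your parenthetical appeal to ``any $\wt\lambda(T)$ with $\wt\HH_T=\ker\wt\lambda(T)$'' annihilating the subspace presupposes that such a separating covector exists. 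The missing idea is that no genuine one-sided inequality needs to be produced at all: with $\wt\HH_t:=\HH_t\oplus\RR_{q_0(t)}$ one shows that the \emph{entire cone} $\wt\K_t$ lies inside $\wt\HH_t$. Indeed, every generator of $\wt\K_t$ in \eqref{eqn:cone} has the form $\T\Flow{t\tau}\bigl[\wt f_{v}(\wt q(\tau))-\wt f_{\wh u(\tau)}(\wt q(\tau))\bigr]$ with the bracketed vector lying in $\D_{q(\tau)}\oplus\RR_{q_0(\tau)}\subset\wt\HH_\tau$, so by the $\Flow{t\tau}$-invariance of $\wt\HH_t$ all generators, hence their closed convex hull $\wt\K_t$, lie in the hyperplane $\wt\HH_t$; since also $\wt\RR^-_{\wt q(t)}\subset\wt\RR_{\wt q(t)}\subset\wt\HH_t$, the hyperplane separates $\wt\K_t$ from $\wt\RR^-_{\wt q(t)}$ trivially, and it contains $\wt\RR_{\wt q(t)}$, so the extremal is abnormal. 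This is the paper's route and it needs nothing beyond the definition of $\wt\K_t$.

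A smaller remark on the ``moreover'' clause: applying Lemma \ref{lem:along_x} with $\B=\flow{\bullet}(\D)$ only tells you that $\flow{\bullet}(\D)_{q(t)}$ remains bracket-invariant for any other extension $f_{\wh u'(t)}$ agreeing with $f_{\wh u(t)}$ along $q(t)$; to conclude that the \emph{minimal} invariant distribution itself is unchanged you still need the two-sided minimality argument (by Theorem \ref{thm:flow_bracket} the distribution $\flow{\bullet}(\D)_{q(t)}$ is respected by the flow of the other extension, hence contains $\flow{\bullet}'(\D)_{q(t)}$, and symmetrically), which your write-up jumps over. That step is easy to supply, unlike the separation issue above, which requires replacing the appeal to the PMP by the direct containment $\wt\K_t\subset\wt\HH_t$.
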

Note that Theorem \ref{thm:abnormal} reduces the problem of finding abnormal SR extremals to the study of the minimal distribution $\flow{t\tau}$-invariant along $q(t)$ and containing $\D_{q(t)}$. Often, if $q(t)$ is sufficiently regular, this problem can be solved by the methods introduced in Lemma \ref{cor:algorithm}, which are more practical from the computational view-point. 

\begin{corollary}
\label{cor:abnormal_smooth}
Let $X$ be a $C^\infty$-smooth $\D$-valued vector field and let $q(t)$ with $t\in[0,T]$ be an integral curve of $X$. Then $q(t)$ is a SR abnormal extremal in the following two (non-exhaustive) situations:
\begin{itemize}
	\item The distribution spanned by the iterated Lie brackets of $X$ with all possible smooth $\D$-valued vector fields, i.e.,
		$$\ad^\infty_X(\D)=\<\ad_X^k(Y)\ |\ Y\in \Sec(\D),\quad k=0,1,2,\hdots>$$
is of constant rank $r$ along $q(t)$ and  $r<\dim Q$.
\item  There exists a smooth  distribution $\B\supset \D$ on $Q$ of constant co-rank at least one, such that 
$$[X,\B]_{q(t)}\subset\B_{q(t)}\quad \text{for any $t\in[0,T]$.}$$
\end{itemize}

\end{corollary}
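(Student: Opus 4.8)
The plan is to deduce both items from the equivalence \eqref{cond:A_abnormal}$\Leftrightarrow$\eqref{cond:B_abnormal} of Theorem~\ref{thm:abnormal}: in each case it suffices to exhibit a control $\wh u(t)$ for which $q(t)$ is a trajectory of the SR control system and for which the minimal $\flow{t\tau}$-invariant distribution $\flow{\bullet}(\D)_{q(t)}$ of Proposition~\ref{prop:AB} has rank $<\dim Q$, where $\flow{t\tau}$ is the TD flow of $f_{\wh u(t)}$. The preparatory step is to choose this control: in a local frame $f_1,\dots,f_d$ of $\D$ write $X=\sum_i u^i(q)f_i(q)$ and set $\wh u(t):=(u^1(q(t)),\dots,u^d(q(t)))$; then $\wh u$ is (smooth, hence) admissible, $q(t)$ is the corresponding trajectory, and, most importantly, $f_{\wh u(t)}$ \emph{coincides with $X$ along $q(t)$} (both equal $\dot q(t)$). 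This coincidence is the bridge that, via Lemma~\ref{lem:along_x}, will let me transfer invariance statements proved for the single smooth field $X$ to the a priori different TDVF $f_{\wh u(t)}$.

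For the first situation I would put $\B_{q(t)}:=\ad^\infty_X(\D)_{q(t)}$. It is \sexy\ (spanned by the finitely many smooth fields $\ad_X^k(Y)$, $Y\in\Sec(\D)$), contains $\D_{q(t)}$ (take $k=0$), and is of constant rank $r<\dim Q$ along $q(t)$ by hypothesis. Lemma~\ref{cor:algorithm}, applied to the smooth $X$ and the smooth $\D$, identifies $\B$ with $\A{\bullet}(\D)_{q(t)}$, the smallest distribution along $q(t)$ containing $\D_{q(t)}$ and respected by $\A{t}$, the flow of $X$; in particular $\B$ is $X$-invariant along $q(t)$ by Theorem~\ref{thm:flow_bracket}. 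Since $f_{\wh u(t)}$ and $X$ agree along $q(t)$, Lemma~\ref{lem:along_x} (applicable because $\D$ is smooth of constant rank, $\B$ is \sexy, and $\D_{q(t)}\subseteq\B_{q(t)}$) gives that $\B$ is also $f_{\wh u(t)}$-invariant along $q(t)$; being \sexy\ and of constant rank it is then respected by $\flow{t\tau}$ along $q(t)$ by Theorem~\ref{thm:flow_bracket} again. Hence $\flow{\bullet}(\D)_{q(t)}\subseteq\B$ by minimality, so $\rank\flow{\bullet}(\D)_{q(t)}\le r<\dim Q$, and Theorem~\ref{thm:abnormal} yields that $(\wt q(t),\wh u(t))$ is an abnormal SR extremal.

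The second situation follows the same template with $\B$ now the restriction to $q(t)$ of the given smooth distribution $\B\supset\D$ of constant co-rank $\ge1$: by Proposition~\ref{prop:sexy_examples} this restriction is \sexy, of constant rank, and contains $\D_{q(t)}$. The only extra point is upgrading the hypothesis $[X,\B]_{q(t)}\subseteq\B_{q(t)}$ to $X$-invariance along $q(t)$ in the sense of Definition~\ref{def:bracket_distrib}: writing an arbitrary ACB section of the restricted $\B$ in a local smooth frame $Z_1,\dots,Z_k$ of $\B$ as $Y(t)=\sum_i a^i(t)Z_i(q(t))$ with ACB scalars $a^i$, and using $\R$-linearity together with the Leibniz identity $[X,a(t)Z]_{q(t)}=\dot a(t)Z(q(t))+a(t)[X,Z]_{q(t)}$ for the along-curve bracket (a direct consequence of \eqref{eqn:lie_bracket}), one gets $[X,Y]_{q(t)}=\sum_i\dot a^i(t)Z_i(q(t))+\sum_i a^i(t)[X,Z_i]_{q(t)}\in\B_{q(t)}$, since $Z_i(q(t))\in\B_{q(t)}$ and, by the hypothesis, $[X,Z_i]_{q(t)}\in\B_{q(t)}$. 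From here Lemma~\ref{lem:along_x}, Theorem~\ref{thm:flow_bracket} and the minimality of $\flow{\bullet}(\D)$ give $\flow{\bullet}(\D)_{q(t)}\subseteq\B_{q(t)}$, which has co-rank $\ge1$, and Theorem~\ref{thm:abnormal} again concludes.

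The step requiring the most care is the passage from the hypotheses — formulated with the single smooth vector field $X$ and its \emph{ordinary} Lie brackets — to the genuine extremal TDVF $f_{\wh u(t)}$ and the along-curve bracket of Definition~\ref{def:bracket}. This is precisely where Lemma~\ref{lem:along_x} (agreement of $f_{\wh u(t)}$ with $X$ along $q(t)$ being sufficient) and the Leibniz rule for along-curve brackets do the work, and where one must check that the chosen control is admissible and that the apparent dependence on the local frame $f_1,\dots,f_d$ is immaterial (a finite cover of the compact curve $q([0,T])$ by frame charts suffices, as the conclusion of Theorem~\ref{thm:abnormal} only involves $f_{\wh u(t)}$ and $\D$ along $q(t)$). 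Everything else is routine bookkeeping with Theorem~\ref{thm:flow_bracket} and the minimality of $\flow{\bullet}(\D)_{q(t)}$.
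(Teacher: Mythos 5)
Your argument is correct and follows essentially the same route as the paper's own proof: in each case one exhibits a constant-rank, \sexy\ distribution along $q(t)$ containing $\D$, of co-rank at least one, which is $X$-invariant (via Lemma \ref{cor:algorithm} in the first case, via the Leibniz-rule upgrade in the second), hence flow-invariant by Theorem \ref{thm:flow_bracket}, so that it contains $\flow{\bullet}(\D)_{q(t)}$ and Theorem \ref{thm:abnormal} applies. The only difference is presentational: you spell out the choice of control and invoke Lemma \ref{lem:along_x} explicitly to pass from $X$ to $f_{\wh u(t)}$, whereas the paper absorbs this point into the final statement of Theorem \ref{thm:abnormal} (that condition \eqref{cond:B_abnormal} depends only on the values of $f_{\wh u(t)}$ and $\D$ along $q(t)$).
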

The above fact follows directly from Theorem \ref{thm:abnormal}, Lemma \ref{cor:algorithm} and Theorem \ref{thm:flow_bracket}. In each of the two cases along $q(t)$ we have a constant rank smooth (and thus \sexy) distribution  which contains $\D$, is $X$-invariant along $q(t)$ (and thus by Theorem \ref{thm:flow_bracket} also $\flow{t\tau}$-invariant along $q(t)$) and of co-rank at least one. Clearly such a distribution must contain $\flow\bullet(\D)_{q(t)}$, which in consequence also is of co-rank at least one. 

\begin{remark}
\label{rem:zhit_wrong}
In Sec. 7.3 in \cite{Zhitomirskii_1995} Zhitomirskii considered the following 2-distribution on $\R^5$ 
$$\D=\<X=\pa_x,Y=\pa_{y^1}+x\pa_{y^2}+(xy^1+h_1(x))\pa_{y^3}+(x(y^1)^2+h_2(x))\pa_{y_4})>\ ,$$ 
where $(x,y^1,y^2,y^3,y^4)$ are coordinates on $\R^5$ and smooth functions $h_1$ and $h_2$ satisfy the conditions 
$$\begin{cases}
h_1(x)=0 &\text{for $x\leq 0$}\\
h_1(x)\neq 0,\ h_1'(x)\neq 0,\ h_1''(x)\neq 0 &\text{for $x>0$}
\end{cases}$$
and
$$
\begin{cases}
h_2(x)=0 &\text{for $x\geq 0$}\\
h_2(x)\neq 0,\ h_2'(x)\neq 0,\ h_2''(x)\neq 0 &\text{for $x<0$}\ .
\end{cases}$$
Zhitomirskii proved that the curve $(-\varepsilon,\varepsilon)\ni t\mapsto (t,0,0,0,0)\in \R^5$ (which is obviously an integral curve of $X$) is not an abnormal SR extremal, yet, as he claims, the distribution $\ad^\infty_X(\D)$ regarded in the above corollary is of constant rank $r=4<5$ along this curve. A detailed study of this example reveals, however, that along the investigated curve, $r=4$ apart from the point $(0,0,0,0,0)$, where the rank drops down to 3. Thus the discussed example does not contradicts Corollary \ref{cor:abnormal_smooth}, as the regularity condition is not matched. In fact, the considered curve consists of two pieces of abnormal SR extremals (for $t>0$ and $t<0$) which do not concatenate to a single SR abnormal extremal, even though the concatenation is $C^\infty$-smooth. This example shows that the condition $r <\dim Q$ in Corollary \ref{cor:abnormal_smooth} is not sufficient (although it is necessary in the smooth case). 
\end{remark}

\begin{proof}[Proof of Theorem \ref{thm:abnormal}.]
If $(\wt q(t),\wh u(t))$ is an abnormal extremal then, by the results of Lemma \ref{prop:dim_H}, $\HH_t$, the $\T Q$-projection of the curve of supporting hyperplanes $\wt \HH_t\subset\T_{\wt q(t)} \wt Q$, is a curve of hyperplanes along $q(t)$ (i.e., a distribution of co-rank one along $q(t)$), it contains $\D_{q(t)}$ and is $\flow{t\tau}$-invariant along $q(t)$. In particular, it must contain the smallest distribution $\flow{t\tau}$-invariant  along $q(t)$ and containing $\D$ (cf. Proposition \ref{prop:AB}). Thus $\rank \flow\bullet(\D)_{q(t)}\leq\rank \HH_t=\dim Q-1$. \smallskip

Conversely, assume that $\rank \flow\bullet(\D)_{q(t)}<\dim Q$. Now by adding (if necessary) to $\flow\bullet(\D)_{q(t)}$ several vector fields of the form $\flow{tt_0}(X)$ where $X\in T_{q(0)}Q$, we can extend $\flow\bullet(\D)_{q(t)}$ to $\HH_t$, a co-rank one distribution $\flow{t\tau}$-invariant along $q(t)$. Define now the curve of hyperplanes $\wt\HH_t:=\HH_t\oplus\RR_{q_0(t)}\subset\T_{\wt q(t)}\wt Q$. We claim that $\wt \HH_t$ is a curve of supporting hyperplanes described in the assertion of Theorem \ref{thm:pmp_contact}. Indeed, the $\Flow{t\tau}$-invariance of $\wt\HH_t$ should be clear, as on the product $\wt Q=Q\times\R$ the TD flow $\Flow{t\tau}$ takes the form $\Flow{t\tau}(q,q_0)=(\flow{t\tau}(q), B_{t\tau}(q_0))$, for some TD flow $B_{t\tau}$ on $\R$. Clearly, since $\HH_t$ is $\flow{t\tau}$-invariant along $q(t)$, the tangent map of $\Flow{t\tau}$ preserves $\wt\HH_t=\HH_t\oplus\RR_{q_0(t)}$. To prove that $\wt \HH_t$ indeed separates the cone $\wt \K_t$ from the direction of the decreasing cost $\wt \RR^-_{\wt q(t)}$ observe that any vector of the form $\bm f_{v}(\wt q(t))-\bm f_{\wh u(t)}(\wt q(t))$, where $f_{v}\in\D_{q(t)}$, lies in $\D_{q(t)}\oplus\RR_{q_0(t)}\subset\wt\HH_t$. Moreover, any vector of the form  $\T \Flow{t\tau}\left[\bm f_v(\wt q(\tau))-\bm f_{\wh u(t)}(\wt q(\tau))\right]$, where $f_{v}\in\D_{q(\tau)}$, lies in $\T \Flow{t\tau}(\D_{q(\tau)}\oplus\RR_{q_0(\tau)})\subset\T\Flow{t\tau}(\wt\HH_\tau)\subset\wt\HH_t$. Thus, the whole cone $\wt\K_t$ is contained in $\wt\HH_t$ (cf. formula \eqref{eqn:cone}). Since also $\wt \RR^-_{\wt q(t)}\subset\wt\RR_{\wt q(t)}\subset\wt\HH_t$, we conclude that indeed $\wt\HH_t$ separates $\wt\K_t$ from $\wt \RR^-_{\wt q(t)}$ (in a trivial way). \smallskip

Finally, to justify the last statement of the assertion we can use Theorem \ref{thm:flow_bracket} to express the $\flow{t\tau}$-invariance of $\B_{q(t)}:=\flow\bullet(\D)_{q(t)}$ along $q(t)$ as the $f_{\wh u(t)}$-invariance of the latter distribution, and then use Lemma \ref{lem:along_x} (for $\B_{q(t)}\supset\D_{q(t)}\ni f_{\wh u(t)}(q(t))$) to prove that this invariance depends on $f_{\wh u(t)}$ and $\flow\bullet(\D)_{q(t)}$ along $q(t)$ only. Now it is enough to check that $\flow\bullet(\D)_{q(t)}$ itself does not depend on a particular choice of the extension of $f_{\wh u(t)}(q(t))$ to a neighborhood of $q(t)$. Assume thus that $f_{\wh u'(t)}$ is another extension of $f_{\wh u(t)}(q(t))$, that $\flow{t\tau}'$ is the related TD flow, and that $\flow{\bullet}'(\D)_{q(t)}$ is the minimal distribution $\flow{t\tau}'$-invariant along $q(t)$ and containing $\D_{q(t)}$. Now repeating the reasoning from the proof of Lemma \ref{lem:along_x} we would get
$$[f_{\wh u(t)},\flow\bullet(\D)]_{q(t)}=[f_{\wh u'(t)},\flow\bullet(\D)]_{q(t)}\mod \D_{q(t)}\ .$$
Since $[f_{\wh u(t)},\flow\bullet(\D)]_{q(t)}\subset \flow\bullet(\D)_{q(t)}$ and $\D_{q(t)}\subset\flow\bullet(\D)_{q(t)}$, we get $[f_{\wh u'(t)},\flow\bullet(D)]_{q(t)}\subset \flow\bullet(D)_{q(t)}$ which, by Theorem \ref{thm:flow_bracket}, implies that $\flow\bullet(D)$ is respected by the TD flow $\flow{t\tau}'$ along $q(t)$. From the minimality of $\flow{\bullet}'(\D)_{q(t)}$ we conclude that $\flow{\bullet}'(\D)_{q(t)}\subset \flow\bullet(\D)_{q(t)}$. Yet, for intertwined $f_{\wh u(t)}$ and $f_{\wh u'(t)}$ we would get the opposite inclusion in an analogous manner. Thus $\flow\bullet(\D)_{q(t)}=\flow{\bullet}'(\D)_{q(t)}$, and so it does not depend on the choice of the extension of $f_{\wh u(t)}$. This ends the proof. 
\end{proof}

\paragraph{Examples.}

\begin{example}
\label{ex:234}
Let $\D\subset\T Q$ be a smooth rank-2 distribution with the growth vector $(2,3,4,\hdots)$. Let $Y$, $Z$ be a local basis of sections of $\D$. From the form of the growth vector we conclude that the fields $Y$, $Z$ and $[Y,Z]$ are linearly independent, while the distribution
$$\<Y,Z,[Y,Z],[Y,[Y,Z]],[Z,[Y,Z]]>$$
is of rank 4. Thus the fields $[Y,[Y,Z]]$ and $[Z,[Y,Z]]$ are linearly dependent relative to the  distribution  $\<Y,Z,[Y,Z]>$, i.e., there exist smooth functions $\phi,\psi:Q\ra\R$ such that 
\begin{equation}
\label{eqn:2_distrib}
\phi[Y,[Y,Z]]+\psi[Z,[Y,Z]]=0\mod \<Y,Z,[Y,Z]>\ .
\end{equation}
We claim that the integral curves of the line bundle $\<\phi Y+\psi Z>\subset\D$ are SR abnormal extremals (notice that $\phi Y+\psi Z\in\D$ is a characteristic vector field of $\D+[\D,\D]$). To prove this we shall use the results of Corollary \ref{cor:abnormal_smooth}. Indeed, it is easy to check, using \eqref{eqn:2_distrib}, that for $X=\phi Y+\psi Z$ the smallest distribution $\ad_X$-invariant and containing $\D$ is simply the 3-distribution $\<Y,Z,[Y,Z]>$. This agrees with the results of Prop. 11 in \cite{Liu_Sussmann_1995} and Sec. 9 of \cite{Zhitomirskii_1995}. 
\end{example}

\begin{example}[Zelenko] 
\label{ex:dubrov} 
The following example by Igor Zelenko \cite{Zelenko_2006} become know to us thanks to the lecture of Boris Doubrov. The interested reader may consult also \cite{Agrachev_Zelenko_2006,Doubrov_Zelenko_preprint}.

Consider a 5-dimensional manifold $M$ with a 2-dimensional distribution $\B\subset \T M$ of type $(2,3,5)$. That is, locally $\B$ is spanned by a pair of vector fields $X_1$ and $X_2$ such that 
$$X_1,\quad X_2,\quad X_3:=[X_1,X_2],\quad X_4:=[X_1,X_3]\quad\text{and}\quad X_5:=[X_2,X_3]$$
form a local basis of sections of $\T M$. Consider now the bundle $Q:=\PP(\B)\subset\PP(\T M)\ra M$ of lines in $\B$, being a 6-dimensional manifold and a $\PP^1\R$-bundle over $M$. Introduce an affine chart $[1:t]$ corresponding to the line $\R\cdot\{X_1+tX_2\}$ on fibers of $Q\ra M$ and define a 2-dimensional distribution $\D:=\<\pa_t,X_1+tX_2>$ on $Q$. Our goal is to find abnormal SR extremals for this distribution. We will use Corollary \ref{cor:abnormal_smooth} for this purpose. 

First let us show that the integral curves of $\pa_t$ are abnormal extremals. Indeed, it is easy to see that $[\pa_t,X_1+t X_2]=X_2$ and that $[\pa_t,X_2]=0$, i.e., the minimal distribution ${\pa_t}$-invariant and containing $\D$ is precisely $\<\pa_t,X_1,X_2>$. This distribution is of constant rank smaller than $6=\dim Q$, so by Corollary \ref{cor:abnormal_smooth}, indeed, the integral curves of $\pa_t$ are abnormal extremals.

It is more challenging to find a second family of abnormal extremals of $\D$. Let us look for such a family being the integral curves of the field $H=X_1+t X_2+F\pa_t$, where $F$ is some, \emph{a priori} unknown, function on $Q$. To calculate the minimal distribution $H$-invariant and containing $\D$ it is enough to consider iterated Lie brackets $\ad^k_H(\pa_t)$. Skipping some simple calculations one can show that the vector fields
$$\pa_t, \quad H,\quad \ad_H(\pa_t)=[H,\pa_t],\quad\ad^2_H(\pa_t)\quad\text{and}\quad \ad^3_H(\pa_t)$$
span a 5-dimensional distribution $\wtilde \D$ on $Q$. Denote $[X_i,X_j]:=\sum_{k=1}^5f^k_{ij} X_k$ for $i,j=1,\hdots,5$. Then the Lie bracket $\ad^4_H(\pa_t)$ belongs to $\wtilde\D$ if and only if
$$F=-f^5_{14}+(f^4_{14}-2f^5_{24})t+(2f^4_{24}-f^5_{25})t^2+f^4_{24}t^3\ .$$
In such a case, $\wtilde \D$ is a constant rank distribution containing $\D$ and closed under $\ad_H(\cdot)$ (i.e., $H$-invariant). Since $\rank\wtilde D=5< \dim Q$, by Corollary \ref{cor:abnormal_smooth} the integral curves of $H$ (for $F$ as above) are abnormal SR extremals related with $\D$.
\end{example}

\begin{example}[Strongly bracket generating distributions]
\label{ex:sbg}
Recall that a distribution $\D\subset\T Q$ is called \emph{strongly bracket generating} (SBG, in short) if for any $p\in Q$ and any $X\in\Sec(\D)$ non-vanishing at $p$ we have
$$\D_p+[X,\D]_p=\T_pQ\ .$$
In the light of Corollary \ref{cor:abnormal_smooth} it is clear that a SR geodesic problem related with a SBG distribution does not admit any abnormal SR extremal. 

In fact, the same conclusion holds for a weaker version of the SBG condition, i.e., it is enough to assume that 
$$\D_p+[X,\D]_p+[X,[X,\D]]_p+\hdots=\T_pQ$$
for any $X\in\Sec(\D)$ non-vanishing at $p$. 
\end{example}

\begin{example}[Submanifold]
Assume that $S\subset Q$ is a submanifold of co-dimension at least one and such that $\D\big|_S\subset\T S$. Then any ACB curve $t\mapsto q(t)$ tangent to $\D$ and contained in $S$ is an abnormal extremal. Indeed, in this case $\T_{q(t)}S$ is obviously a \sexy\ distribution $\flow{t\tau}$-invariant along $q(t)$ which contains $\D_{q(t)}$ and is of co-rank at least one in $\T_{q(t)}Q$. Thus
$$\flow\bullet(\D)_{q(t)}\subset \T_{q(t)}S$$
and, consequently, $\flow\bullet(\D)_{q(t)}$ is of rank smaller than $\dim Q$. By Theorem \ref{thm:abnormal}, $q(t)$ is an abnormal extremal. 
\end{example}

\begin{example}[Zhitomirskii]
\label{ex:zhit_nice} 
Let $\D$ be a 2-distribution on a manifold $Q$ such that $\D^2:=\D+[\D,\D]$ is of rank 3. In \cite{Zhitomirskii_1995} Zhitomirskii introduced the following definition. 

A distribution $\mathcal{Z}\subset\T Q$ of co-dimension 2 is called \emph{nice with respect to $\D$} if
\begin{itemize}
	\item $\mathcal{Z}$ is involutive
	\item for any $q\in Q$ we have $\D_q\not\subseteq \mathcal{Z}_q$
	\item $\rank (\D^2\cap\mathcal{Z})=2$. 
\end{itemize}

In this case the intersection $\mathcal{L}:=\D\cap\mathcal{Z}$ is a line distribution. We shall show that the integral curves of $\mathcal{L}$ are abnormal SR extremals. Indeed, observe that $\D^2=\D^2\cap \mathcal Z+\D$ and thus
$$\HH:=\mathcal Z+\D^2=\mathcal Z+\D$$
is a smooth co-rank-one distribution in $Q$. Clearly $\D\subset \HH$ and, what is more, given any section $X\in \Sec(\mathcal L)$ we have $[X,\HH]\subset\HH$. Indeed, take any $\HH$-valued vector field $Y$. Since  $\HH=\mathcal Z+\D$ we can decompose it (in a non-unique way) as $Y=Y_1+Y_2$ where $Y_1\in\Sec(\mathcal Z)$ and $Y_2\in \Sec(\D)$. Now $[X,Y]=[X,Y_1]+[X,Y_2]$. Clearly $[X,Y_1]\in\Sec(\mathcal Z)$, since $X$ and $Y_1$ are $\mathcal Z$-valued and $\mathcal Z$ is involutive. Moreover $[X,Y_2]\in \Sec(\D^2)$, as both $X$ and $Y_2$ are $\D$-valued. We conclude that $[X,Y]=[X,Y_1]+[X,Y_2]\in\Sec(\mathcal Z+\D^2)=\Sec(\HH)$. 

Now it should be clear that the smallest distribution containing $\D$ and invariant with respect to to the TD flow of $Y$ is contained in $\HH$, which is of co-rank one. Thus, by Theorem \ref{thm:abnormal}, the integral curves of $X$ are abnormal SR extremals.
\end{example}

\subsection{Normal SR extremals}\label{ssec:normal}

Observe first that the extremal vector field $f_{\wh u(t)}$ is normalized by $g(f_{\wh u(t)},f_{\wh u(t)})\equiv 1$ along every solution of the SR geodesic problem. Indeed,
this follows easily from the standard argument involving the Cauchy-Schwartz inequality. From now on we shall thus assume that the extremal vector field $f_{\wh u(t)}$ is normalized in a neighborhood of a considered trajectory $q(t)$. This assumption allows for an elegant geometric characterization of normal SR extremals in terms of the distribution
$$\D^\perp_{q(t)}:=\{Y\in\D_{q(t)}\ |\ g(Y,f_{\wh u(t)})=0\}$$  
consisting of those elements of $\D$ which  are $g$-orthogonal to $f_{\wh u(t)}$ along $q(t)$. Note that $\D^\perp_{q(t)}$ is a subdistribution of $\D$ along $q(t)$.

\begin{theorem}[\cite{Alcheikh_Orro_Pelletier_1997}]
\label{thm:normal}
Assume that the field $f_{\wh u(t)}$ is normalized, i.e., $g(f_{\wh u(t)},f_{\wh u(t)})\equiv 1$. Then, for the SR geodesic problem introduced above, the following are equivalent:
\begin{enumerate}[(a)]
	\item\label{cond:A_normal} The pair $(\wt q(t),\wh u(t))$ is a normal SR extremal.
	\item\label{cond:B_normal} The velocity $f_{\wh u(t)}(q(t))$ is of class ACB with respect to $t$, and the smallest distribution $\flow{t\tau}$-invariant along $q(t)$ and containing $\D^\perp_{q(t)}$, i.e.,
	$$\flow\bullet(\D^\perp)_{q(t)}=\vect\{\T \flow{t\tau}(Y)\ |\ Y\in\D_{q(\tau)},\quad g(Y,f_{\wh u(t)})=0,\quad 0\leq \tau\leq T\}$$
	does not contain $f_{\wh u(t)}(q(t))$ for any $t\in[0,T]$. Here $\flow{t\tau}$ denotes the TD flow (in $Q$) of the \nice\ TDVF $f_{\wh u(t)}$.
\end{enumerate}
\end{theorem}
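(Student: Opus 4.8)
The plan is to argue in parallel with the proof of Theorem~\ref{thm:abnormal}, translating the curve of separating hyperplanes $\wt\HH_t\subset\T_{\wt q(t)}\wt Q$ provided by Theorem~\ref{thm:pmp_covariant} into a curve of hyperplanes $\HH_t\subset\T_{q(t)}Q$ along $q(t)$. The first step is to exploit the normalization: having rescaled $f_{\wh u(t)}$ to unit $g$-length in a whole neighbourhood of $q(t)$, the cost accumulates at the constant rate $\tfrac12$, so on $\wt Q=Q\times\R$ the TD flow takes the split form $\Flow{t\tau}(q,q_0)=(\flow{t\tau}(q),q_0+\tfrac{t-\tau}{2})$; in particular $\T\Flow{t\tau}$ respects the decomposition $\T_{\wt q}\wt Q=\T_qQ\oplus\R\wt\pa_{q_0}$. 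Recall also from Lemma~\ref{prop:dim_H} that $\wt\HH_t$ always contains the $\rank\D$-dimensional subspace $S_t:=\{Y+g(f_{\wh u(t)}(q(t)),Y)\wt\pa_{q_0}\ |\ Y\in\D_{q(t)}\}$, and note $f_{\wh u(t)}(q(t))+\wt\pa_{q_0}\in S_t$ because $g(f_{\wh u(t)},f_{\wh u(t)})\equiv1$. The extremal is normal exactly when $\wt\pa_{q_0}\notin\wt\HH_t$, and then $\wt\HH_t$ is the graph $\{v+\<\lambda_t,v>\wt\pa_{q_0}\ |\ v\in\T_{q(t)}Q\}$ of a covector $\lambda_t\in\T^\ast_{q(t)}Q$; comparing with $S_t\subset\wt\HH_t$ forces $\lambda_t|_{\D_{q(t)}}=g(f_{\wh u(t)}(q(t)),\cdot)|_{\D_{q(t)}}$, so $\lambda_t$ kills $\D^\perp_{q(t)}$ while $\<\lambda_t,f_{\wh u(t)}(q(t))>=1$. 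Writing $\HH_t:=\ker\lambda_t=\wt\HH_t\cap\T_{q(t)}Q$, the split form of $\Flow{t\tau}$ shows that $\Flow{t\tau}$-invariance of $\wt\HH_t$ is equivalent to $\flow{t\tau}$-invariance of $\HH_t$ together with $\lambda_t=(\flow{\tau t})^\ast\lambda_\tau$.

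For $(a)\Rightarrow(b)$: if $(\wt q(t),\wh u(t))$ is a normal SR extremal then the hyperplane field $\HH_t$ above is $\flow{t\tau}$-invariant along $q(t)$, contains $\D^\perp_{q(t)}$ and avoids $f_{\wh u(t)}(q(t))$; by minimality (Proposition~\ref{prop:AB}) $\flow\bullet(\D^\perp)_{q(t)}\subset\HH_t$, hence it too avoids $f_{\wh u(t)}(q(t))$. For the regularity statement, $\wt\HH_t=\T\Flow{tT}(\wt\HH_T)$ depends ACB-ly on $t$ (the TD flow of a \nice\ TDVF is ACB in time), hence so does $\lambda_t$, and therefore $f_{\wh u(t)}(q(t))$ — the preimage of $\lambda_t|_{\D_{q(t)}}$ under the smooth musical isomorphism $\D^\ast\to\D$ defined by $g$ — is ACB in $t$.

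For $(b)\Rightarrow(a)$: assume $f_{\wh u(t)}(q(t))$ is ACB and $f_{\wh u(t)}(q(t))\notin\mathcal B_{q(t)}$ for every $t$, where $\mathcal B_{q(t)}:=\flow\bullet(\D^\perp)_{q(t)}$. Since $f_{\wh u(0)}(q(0))\notin\mathcal B_{q(0)}$, pick a hyperplane $\HH_0\subset\T_{q(0)}Q$ with $\mathcal B_{q(0)}\subset\HH_0$ and $f_{\wh u(0)}(q(0))\notin\HH_0$, transport it by $\HH_t:=\T\flow{t0}(\HH_0)$ (a $\flow{t\tau}$-invariant hyperplane along $q(t)$ containing $\mathcal B_{q(t)}\supset\D^\perp_{q(t)}$), let $\lambda_t$ be determined by $\ker\lambda_t=\HH_t$ and $\<\lambda_0,f_{\wh u(0)}(q(0))>=1$ together with $\lambda_t:=(\flow{0t})^\ast\lambda_0$, and set $\wt\HH_t:=\{v+\<\lambda_t,v>\wt\pa_{q_0}\ |\ v\in\T_{q(t)}Q\}$. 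The heart of the matter is the identity $\<\lambda_t,f_{\wh u(t)}(q(t))>\equiv1$: differentiating $\T\flow{0t}$ along $q(t)$ and cancelling the spatial terms against $\ddot q(t)$ gives $\tfrac{d}{dt}\bigl[\T\flow{0t}(f_{\wh u(t)}(q(t)))\bigr]=\T\flow{0t}\bigl((\pa_t f_{\wh u(t)})(q(t))\bigr)$, where $(\pa_t f_{\wh u(t)})(q(t))\in\D_{q(t)}$ (as $f_{\wh u(t)}$ is $\D$-valued) and is $g$-orthogonal to $f_{\wh u(t)}(q(t))$ (differentiating $g(f_{\wh u(t)},f_{\wh u(t)})\equiv1$ kills the cross term), hence $(\pa_t f_{\wh u(t)})(q(t))\in\D^\perp_{q(t)}\subset\ker\lambda_t$; thus $\tfrac{d}{dt}\<\lambda_0,\T\flow{0t}(f_{\wh u(t)}(q(t)))>=\<\lambda_t,(\pa_t f_{\wh u(t)})(q(t))>=0$ a.e., and since the velocity is ACB this function is absolutely continuous, hence constantly equal to its value $1$ at $t=0$. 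Consequently $\lambda_t|_{\D_{q(t)}}=g(f_{\wh u(t)}(q(t)),\cdot)|_{\D_{q(t)}}$ for every $t$ (it vanishes on $\D^\perp_{q(t)}$ and equals $g(f_{\wh u(t)},f_{\wh u(t)})$ on $f_{\wh u(t)}(q(t))$), so $\wt\HH_t\supset S_t$, $\wt\HH_t$ is $\Flow{t\tau}$-invariant along $\wt q(t)$ (by the split-flow equivalence above) and $\wt\pa_{q_0}\notin\wt\HH_t$. It remains to check that $\wt\HH_t$ separates $\wt\K_t$ from $\wt\RR^-_{\wt q(t)}$: with $\wt\lambda_t:=(\lambda_t,-1)$ one has $\<\wt\lambda_t,-\wt\pa_{q_0}>=1\ge0$, while for a generator $\T\Flow{tt_i}\bigl[\wt f_{v_i}(\wt q(t_i))-\wt f_{\wh u(t_i)}(\wt q(t_i))\bigr]$ of $\wt\K_t$ one computes, using $\wt\lambda_t\circ\T\Flow{tt_i}=\wt\lambda_{t_i}$ and $\lambda_{t_i}|_{\D_{q(t_i)}}=g(f_{\wh u(t_i)},\cdot)$, the value $-\tfrac12\,g\bigl(f_{v_i}-f_{\wh u(t_i)},f_{v_i}-f_{\wh u(t_i)}\bigr)\le0$ by positive-definiteness of $g$. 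Hence $(\wt q(t),\wh u(t))$ is an extremal, normal because $\<\wt\lambda_t,\wt\pa_{q_0}>=-1\neq0$.

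The step I expect to be the real obstacle is precisely the identity $\<\lambda_t,f_{\wh u(t)}(q(t))>\equiv1$ in $(b)\Rightarrow(a)$ — equivalently, the rigidity statement that the velocity, transported back to time $0$, is constant modulo $\flow\bullet(\D^\perp)_{q(0)}$. It is this that upgrades the pointwise non-containment in (b) into the existence of a globally consistent normalizing covector, and it rests essentially on combining the $\D$-valuedness of the extremal field with the unit-speed normalization, and on the velocity being absolutely continuous (without which the derivative computation cannot be integrated back). Everything else — the graph description of $\wt\HH_t$, the minimality argument, and the cone-separation check — is routine once the split form of $\Flow{t\tau}$ is in hand.
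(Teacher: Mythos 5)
Your proposal is correct and follows the same overall strategy as the paper: the graph description of $\wt\HH_t$ over a covector $\lambda_t$ forced by Lemma \ref{prop:dim_H}, the minimality argument for (a)$\Rightarrow$(b), and --- as you rightly single out --- the rigidity statement that the back-transported velocity is constant modulo $\flow\bullet(\D^\perp)$, which is exactly the paper's identity \eqref{eqn:f_preserved}; your differentiation of $\T\flow{0t}(\dot q(t))$ is the same computation the paper performs via variation of constants for the linear ODE satisfied by $\dot q(t)$, and both hinge on the ACB velocity and on $\pa_t f_{\wh u(t)}(q(t))\in\D^\perp_{q(t)}$ (which, as in the paper, uses the normalization of $f_{\wh u(t)}$ in a neighbourhood, or equivalently a $g$-orthonormal frame). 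Two sub-steps are executed differently. For the ACB regularity of the velocity in (a)$\Rightarrow$(b) the paper constructs ACB orthonormal frames with an auxiliary metric and the Gram-Schmidt Lemma \ref{lem:gram_schmidt}, identifies $\pm f_{\wh u(t)}(q(t))$ as the unit normal to $\D^\perp_{q(t)}$ in $\D_{q(t)}$ and fixes the sign by continuity of $\alpha_t$; you instead note that $\wt\HH_t$ is spanned by flow-transported (hence ACB) fields, deduce that $\lambda_t$ is ACB, and dualize with $g$. This is shorter and legitimate, but the step ``hence so does $\lambda_t$'' deserves one explicit line: $\lambda_t$ is obtained by solving a linear system whose ACB coefficient matrix has continuous, non-vanishing determinant on the compact interval $[0,T]$, hence separated from zero --- the same elementary fact about ACB functions the paper uses in Lemma \ref{lem:gram_schmidt} and Theorem \ref{thm:flow_bracket}. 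For the separation in (b)$\Rightarrow$(a) the paper argues geometrically (the hyperplane contains the tangent space to the paraboloid, then convexity and closure), while you pair the cone generators with $\wt\lambda_t=(\lambda_t,-1)$ and compute the value $-\tfrac12\,g(f_{v}-f_{\wh u(t_i)},f_{v}-f_{\wh u(t_i)})\le 0$ explicitly; the separating hyperplanes are the same objects, and your covector check is a slightly more explicit verification of the same fact, with the added bonus that transporting a single hyperplane $\HH_0$ chosen at $t=0$ makes the non-containment for all $t$ follow automatically from the rigidity identity rather than being imposed separately.
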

Theorem 3.1 of \cite{Alcheikh_Orro_Pelletier_1997} contains a formulation of the above result equivalent to ours.

Again if $q(t)$ is sufficiently regular we can use the method introduced in Lemma \ref{cor:algorithm} to check condition~\eqref{cond:B_normal} in the above theorem. The result stated below can be easily derived from   Theorem \ref{thm:normal} using similar arguments as in the proof of Corollary \ref{cor:abnormal_smooth}. For the case $\rank \D=2$ it was proved as Theorem 6 in \cite{Liu_Sussmann_1995}.  
\begin{corollary}\label{cor:normal}
Let $X$ be a $C^\infty$-smooth $\D$-valued vector field and let $q(t)$ with $t\in[0,T]$ be an integral curve of $X$. Then $q(t)$ is a SR normal extremal in the following two (non-exhaustive) situations:
\begin{itemize}
	\item The distribution spanned by the iterated Lie brackets of $X$ and all possible smooth $\D$-valued vector fields $g$-orthogonal to X, i.e.,
		$$\ad^\infty_X(\D^\perp)=\<\ad_X^k(Y)\ |\ Y\in \Sec(\D),\quad g(X,Y)=0, \quad k=0,1,2,\hdots>$$
is of constant rank $r$ along $q(t)$ and it does not contain $X(q(t))$ for any $t\in[0,T]$. 
	\item There exists a smooth  distribution $\B$ on $Q$, such that 
$$[X,\B]_{q(t)}\subset\B_{q(t)}\ ,\quad X(q(t))\notin \B_{q(t)}\quad\text{and}\quad \D^\perp_{q(t)}\subset\B_{q(t)}\quad \text{for any $t\in[0,T]$.}$$
\end{itemize}
\end{corollary}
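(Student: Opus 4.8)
The plan is to deduce both implications from Theorem~\ref{thm:normal}, following verbatim the scheme used for Corollary~\ref{cor:abnormal_smooth}, with the role of $\D$ taken over by $\D^\perp$. Since $X$ is $\D$-valued and (as in the statement given in the Introduction) normalized, in a neighbourhood of $q(t)$ one writes $X=\sum_i u^if_i$ with smooth coefficients $u^i$, so that $\wh u(t):=u(q(t))$ is a control with $f_{\wh u(t)}=X$ along $q(t)$ and $g(f_{\wh u(t)},f_{\wh u(t)})\equiv1$; thus $(\wt q(t),\wh u(t))$ is a trajectory of the SR control system to which Theorem~\ref{thm:normal} is applicable, and a normal SR extremal of it is exactly what the corollary asserts. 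Normalization also makes $X$ nowhere-vanishing near $q(t)$, so $\D^\perp=\{Y\in\D:g(Y,X)=0\}$ is a genuine smooth constant-rank distribution there whose restriction to $q(t)$ is the $\D^\perp_{q(t)}$ of Theorem~\ref{thm:normal}. Finally, the velocity $t\mapsto f_{\wh u(t)}(q(t))=X(q(t))$ is $C^\infty$ (hence ACB) because $X$ and $q(t)$ are smooth, so the regularity part of condition~\eqref{cond:B_normal} is automatic and everything reduces to showing that $X(q(t))\notin\flow\bullet(\D^\perp)_{q(t)}$ for every $t\in[0,T]$.

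In the first case I would apply Lemma~\ref{cor:algorithm} to the smooth distribution $\D^\perp$ and the smooth field $X$: since $\ad^\infty_X(\D^\perp)$ is assumed of constant rank along $q(t)$, that lemma identifies
$$\flow\bullet(\D^\perp)_{q(t)}=\ad^\infty_X(\D^\perp)_{q(t)},$$
and the right-hand side does not contain $X(q(t))$ by hypothesis; hence~\eqref{cond:B_normal} holds and Theorem~\ref{thm:normal} yields that $(\wt q(t),\wh u(t))$ is a normal SR extremal. In the second case the relation $[X,\B]_{q(t)}\subset\B_{q(t)}$ says precisely that the smooth (hence \sexy) distribution $\B$ is $X$-invariant along $q(t)$; taking $\B$ of constant rank along $q(t)$ — which should be added to the hypotheses, exactly as ``constant co-rank at least one'' is required in Corollary~\ref{cor:abnormal_smooth} — Theorem~\ref{thm:flow_bracket} upgrades this to $\flow{t\tau}$-invariance of $\B$ along $q(t)$. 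Since $\D^\perp_{q(t)}\subset\B_{q(t)}$ for all $t$, the minimality clause of Proposition~\ref{prop:AB} forces $\flow\bullet(\D^\perp)_{q(t)}\subset\B_{q(t)}$, and $X(q(t))\notin\B_{q(t)}$ then gives $X(q(t))\notin\flow\bullet(\D^\perp)_{q(t)}$, so again Theorem~\ref{thm:normal} applies.

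The one point demanding care is the constant-rank hypothesis: both Lemma~\ref{cor:algorithm} and Theorem~\ref{thm:flow_bracket} fail to convert $X$-invariance of a distribution along $q(t)$ into $\flow{t\tau}$-invariance unless that distribution has constant rank there (cf.\ Remark~\ref{rem:zhit_wrong}), and without it the identification of $\ad^\infty_X(\D^\perp)$ with $\flow\bullet(\D^\perp)$, respectively the inclusion $\flow\bullet(\D^\perp)\subset\B$, can only be claimed in one direction. Modulo this, the argument is a routine transcription of the abnormal case, the slogan ``$\flow\bullet(\D)$ has rank $<\dim Q$'' being replaced by ``$\flow\bullet(\D^\perp)$ does not contain the velocity $X$''.
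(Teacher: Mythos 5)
Your proposal is correct and is precisely the route the paper intends: Corollary \ref{cor:normal} is given there without a separate proof beyond the remark that it follows from Theorem \ref{thm:normal} ``using similar arguments as in the proof of Corollary \ref{cor:abnormal_smooth}'', i.e., Lemma \ref{cor:algorithm} identifies $\flow\bullet(\D^\perp)_{q(t)}=\ad^\infty_X(\D^\perp)_{q(t)}$ in the first case, while in the second case Theorem \ref{thm:flow_bracket} upgrades $X$-invariance of $\B$ to flow-invariance and the minimality of Proposition \ref{prop:AB} gives $\flow\bullet(\D^\perp)_{q(t)}\subset\B_{q(t)}$. The preliminary points you spell out — normalization of $X$, ACB regularity of the velocity, treating $\D^\perp$ as a genuine smooth constant-rank distribution near $q(t)$, reading $\B$ (and, via a frame with $f_1=X$, the extremal field $f_{\wh u(t)}$ itself) so that the constant-rank hypotheses of Lemma \ref{cor:algorithm} and Theorem \ref{thm:flow_bracket} are met — are exactly the assumptions the paper leaves implicit.
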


\begin{proof}[Proof of Theorem \ref{thm:normal}.]
Assume first that $(\wt q(t),\wh u(t))$ is a normal SR extremal. Let $\wt\HH_t\subset\T_{\wt q(t)}\wt Q$ be the related curve of separating hyperplanes given by the PMP. Note that, since $\wt\HH_t$ for each $t$ is a hyperplane transversal to the line $\RR_{\wt q(t)}\subset \T_{\wt q(t)} \wt Q$, it must be of the form
$$\wt \HH_t=\{Y+\alpha_t(Y)\pa_{q_0}\ |\ Y\in\T_{q(t)}Q\}\ ,$$
where $\alpha_t:\T_{q(t)}Q\ra \R$ is a linear map.  Using the results of Lemma \ref{prop:dim_H} we know that $\alpha_t\big|_{\D_{q(t)}}=f_{\wh u(t)}(q(t))\Big\lrcorner g$, i.e., $\alpha_t(f_{\wh u(t)}(q(t)))=1$ and $\D^\perp_{q(t)}\subset \ker\alpha_t$. In particular, $f_{u(t)}(q(t))$ is transversal to $\ker\alpha_t\supset D^\perp_{q(t)}$. 

Now, since $\bm f_{\wh u(t)}=f_{\wh u(t)}+\frac 12\pa_{q_0}$ (here we use the normalization of $f_{\wh u(t)}$), it is clear that 
$$\Flow{t\tau}(q,q_0)=(\flow{t\tau}(q),q_0+\frac 12(t-\tau))\ .$$
It follows that $\T \Flow{t\tau}\left[Y+\alpha_{\tau}(Y)\pa_{q_0}\right]=\T \flow{t\tau}(Y)+\alpha_{\tau}(Y)\pa_{q_0}$, for every $t,\tau\in[0,T]$ and $Y\in \T_{q(t)}Q$. Since $\T \Flow{t\tau}\left(\wt\HH_{\tau}\right)=\wt \HH_t$, the above vector must be of the form $X+\alpha_t(X)\pa_{q_0}$, where $X=\T \flow{t\tau}(Y)$. That is, $\alpha_t(\T \flow{t\tau}(X))=\alpha_\tau(X)$. In particular, $t\mapsto \alpha_t$ is continuous and, moreover, $\T \flow{t\tau}\left(\ker \alpha_\tau\right)=\ker \alpha_t$ for every $t,\tau\in[0,T]$. We conclude that $\ker\alpha_t$ is a distribution along $q(t)$ which is $\flow{t\tau}$-invariant, contains $D^\perp_{q(t)}$ and is transversal to $f_{\wh u(t)}(q(t))$. Clearly, $\flow\bullet(D^\perp)_{q(t)}\subset\ker\alpha_t$ and thus it is also transversal to $f_{\wh u(t)}(q(t))$. 

To prove that $t\mapsto f_{\wh u(t)}(q(t))$ is ACB, observe first that $D^\perp_{q(t)}=\ker\alpha_t\cap D_{q(t)}$ admits locally a $g$-orthonormal basis of ACB sections. Indeed, $\ker\alpha_t$ is \sexy\ since it is $\flow{t\tau}$-invariant (cf. Proposition~\ref{prop:sexy_examples}). Let now $\{X_1,\hdots,X_{n-1}\}$ be a local basis of ACB sections of $\ker\alpha_t$ along $q(t)$. Choose a minimal subset of this basis, say $\{X_1,\hdots,X_s\}$, such that $\<X_1,\hdots,X_s>_{q(t)}\oplus \D^\perp_{q(t)}=\ker\alpha_t$ for every $t$ in a relatively compact neighborhood of a given point $t_0\in[0,T]$. Extend locally the SR metric $g$ to a metric $\wtilde g$ on $\ker \alpha_t$ by taking $\wtilde g\big|_{\D^\perp_{q(t)}}=g\big|_{\D^\perp_{q(t)}}$ and by setting vectors $X_1,\hdots,X_s$ to be $\wtilde g$-orthonormal and $\wtilde g$-orthogonal to $\D^\perp_{q(t)}$. Clearly, this new metric is ACB in the considered neighborhood of $t_0$. Now we can apply Lemma \ref{lem:gram_schmidt} to the ACB basis $\{X_1,\hdots,X_{n-1}\}$ and obtain an ACB $\wtilde g$-orthonormal basis $\{X_1,\hdots,X_s,Y_{s+1},\hdots,Y_{n-1}\}$ of $\ker\alpha_t$. Clearly, by the construction of the Gram-Schmidt algorithm, $\{Y_{s+1},\hdots,Y_{n-1}\}$ is a $\wtilde g$-, and thus also a $g$-orthonormal basis of $\D^\perp_{q(t)}$ (the relative compactness of the neighborhood is used to assure that the $\wtilde g$-lengths of sections $X_i$ are separated from zero).

Now let us choose any ACB section $Y_n$ of $\D_{q(t)}$ which is transversal to $\D^\perp_{q(t)}$. Again using  Lemma~\ref{lem:gram_schmidt} we modify the ACB local basis $\{Y_{s+1},\hdots,Y_{n-1},Y_n\}$ of $\D_{q(t)}$ to a $g$-orthonormal ACB local basis $\{Y_{s+1},\hdots,Y_{n-1},\wtilde Y_n\}$. Obviously, $\wtilde Y_n(q(t))$ is a $g$-normalized vector $g$-orthogonal to $\D^\perp_{q(t)}=\<Y_{s+1},\hdots,Y_{n-1}>$, thus $\wtilde Y_n(q(t))=\pm f_{u(t)}(q(t))$. Now $\alpha_t(\wtilde Y_n(q(t)))=\pm\alpha_t(f_{u(t)}(q(t)))=\pm 1$. And since both $\alpha_t$ and $\wtilde Y_n(q(t))$ are continuous with respect to $t$ the sign $\pm$ must be constant along $[0,T]$. We conclude that $t\mapsto f_{u(t)}(q(t))$ is ACB alike $t\mapsto\wtilde Y_n(q(t))$.  

\medskip
Conversely, assume that \eqref{cond:B_normal} holds.
{\new 
The condition that the velocity curve $t\mapsto f_{\wh u(t)}(q(t))$ is of class ACB, together with the normalization condition, imply that the velocities are preserved by the the flow $\flow{t \tau}$ up to $\flow\bullet(\D^\perp)_{q(t)}$-terms, i.e.,
\begin{equation}
\label{eqn:f_preserved}
\T \flow{t\tau}\left[f_{\wh u(\tau)}(q(\tau))\right]=f_{\wh u(t)}(q(t))\mod \flow\bullet(\D^\perp)_{q(t)}\qquad\text{for every $t,\tau\in[0,T]$}.
\end{equation}
Indeed, denote $\dot q(t)=f_{\wh u(t)}(q(t))=\sum_{i=1}^du^i(t)f_i(q(t))$, where $\{f_1,f_2,\hdots,f_d\}$ is a local basis of smooth sections of $\D$ and where controls $u^i(t)$'s are, by assumption, ACB with respect to $t$. In particular, the derivatives $\dot u^i(t)$ are a.e. well-defined, bounded, and measurable with respect to $t$. We can differentiate $\dot q(t)$ to get 
$$\ddot q(t)\overset{a.e}=\sum_{i=1}^du^i(t)\frac{\pa f_i(q)}{\pa q}\Big|_{q(t)}\dot q(t)+\sum_{i=1}^d\dot u^i(t)f_i(q(t))=\frac{\pa f_{\wh u(t)}(q)}{\pa q}\Big|_{q(t)}\dot q(t)+\sum_{i=1}^d\dot u^i(t)f_i(q(t))\,,$$
i.e., the velocity $\dot q(t)$ is a solution of an non-autonomous linear ODE with the right-hand side depending measurably on time. Note that the homogeneous part of this equation is precisely equation \eqref{eqn:tangent_ODE} describing the tangent map of the flow of $f_{\wh u(t)}(q)$ along $q(t)$. By the general theory of linear ODEs, for every $t,\tau\in[0,T]$  we have
$$\dot q(t)=\T\flow{t \tau}[\dot q(\tau)]+\int_\tau^t\T\flow{ts}\left[\sum_{i=1}^d\dot u^i(s)f_i(q(s))\right]\dd s \,. $$ 
By the normalization condition vector $\sum_{i=1}^d\dot u^i(s)f_i(q(s))$ is $g$-perpendicular to $\dot q(s)=\sum_{i=1}^d u^i(s)f_i(q(s))$, and hence the integral part of the above equation belongs to $\flow\bullet(D^\perp)_{q(t)}$. This proves \eqref{eqn:f_preserved}. 
}

 The crucial step now is to build, along the projected trajectory $q(t)\in Q$, a splitting $\T_{q(t)} Q=\mathcal{B}_{q(t)}\oplus \<f_{\wh u(t)}(q(t))>$, where $\B_{q(t)}$ is a co-rank one distribution along $q(t)$, which is $\flow{t\tau}$-invariant along $q(t)$ and contains $\D^\perp_{q(t)}$. Such a $\mathcal{B}_{q(t)}$ can be constructed by adding, if necessary, to $\flow\bullet(\D^\perp)_{q(t)}$ several vector fields of the form $F_{t0}(X_i)$, where $X_i \in T_{q(0)} Q$ together with $F_{\hat u (0)} (q(0))$ are independent. Clearly, in this way we can build $\mathcal{B}_{q(t)}$ which is $\flow{t\tau}$-invariant along $q(t)$, of co-rank one and contains $\D^\perp_{q(t)}$. {\new The fact that $\mathcal{B}_{q(t)}$ and $f_{\wh u(t)}(q(t))$ are linearly independent follows directly from condition \eqref{cond:B_normal}. }
Now we can construct the curve of separating hyperplanes $\bm\HH_t$ at $\wt q(t)$ by the formula
$$\bm \HH_t:=(\mathcal B_{q(t)}\oplus 0\cdot \pa_{q_0})\oplus\<f_{\wh u(t)}(q(t))+1\cdot\pa_{q_0}>\ .$$
By construction it is clear that $\bm \HH_t$ is a hyperplane in $\T_{\bm q(t)}\bm Q$ which contains the tangent space to the paraboloid \eqref{eqn:T_cone} and does not contain the line $\bm \RR_{\bm q(t)}$. From these properties we conclude that $\bm \HH_t$ separates strictly the ray $\bm \RR^-_{\bm q(t)}$ from the elements of $\bm \K_t$ of the form $\bm f_{v}(q(t))-\bm f_{\wh u(t)}(q(t))$. {\new Observe that thanks to the normalization of $f_{\wh u(t)}$ (cf. the first part of this proof) and condition \eqref{eqn:f_preserved} we have 
$$\T\Flow{t\tau}\left[\<f_{\wh u(\tau)}(q(\tau))+1\cdot\pa_{q_0}>\right]=\<f_{\wh u(t)}(q(t))+1\cdot\pa_{q_0}>\mod (\mathcal B_{q(t)}\oplus 0\cdot \pa_{q_0})\,,$$
and thus $\bm \HH_t$ is $\T\Flow{t\tau}$-invariant. We conclude that the hyperplane $\bm \HH_t$} also separates strictly the ray $\bm \RR^-_{\bm q(t)}=\T \Flow{t\tau}(\bm \RR^-_{q(\tau)})$ from the elements of $\bm \K_t$ of the form $\T \Flow{t\tau}[\bm f_{v}(\bm q(\tau))-\bm f_{\wh u(t)} (\bm q(\tau))]$. Consequently, using the fact that $\bm \HH_t$ and $\bm \RR^-_{\bm q(t)}$ are convex, we can use $\bm \HH_t$ to separate strictly $\bm \RR^-_{\bm q(t)}$ from any finite convex combination of the above-mentioned elements of $\bm \K_t$. Since $\bm \K_t$ is by definition the closure of the set of such finite convex combinations, $\bm \HH_t$ is indeed the separating  hyperplane described by the PMP. 
\end{proof} 

\paragraph{A remark on smoothness of normal SR geodesics.}
As was proved above normal SR extremals are $C^1$-smooth (and even more: their derivatives are ACB maps). 
It is worth discussing geometric reasons for this regularity in a less technical manner than in the proof of Theorem \ref{thm:normal}. Let $\wt q(t)$ be such an extremal and let $\wt \HH_t$ be the corresponding curve of supporting hyperplanes. As we know from Lemma \ref{prop:dim_H}
$$\D^\perp_{q(t)}\oplus 0\cdot\pa_{q_0}\subset\wt\HH_t\quad\text{and}\quad f_{u(t)}(q(t))+\pa_{q_0}\in\wt \HH_t\quad \text{for every $t\in[0,T]$}.$$ 
\begin{figure}[ht]%
\begin{center}
\def\svgwidth{0.9 \columnwidth}   
\input{singular_fig.tex}
\caption{The existence of singularities of corner- or cusp-type implies abnormality or the lack of optimality.}
\label{fig:singular}%
\end{center}
\end{figure}
These two facts are enough to exclude, at least in a heuristic way, the existence of singularities of corner-type and of cusp-type along $q(t)$. Indeed, since $\wt\HH_t$ is $\Flow{t\tau}$-invariant it must be continuous. Note that by the continuity of $\wt \HH_t$, the limit subspaces $\D^\perp_{q(t_0)\pm}\oplus 0\cdot\pa_{q_0}$ coming from both sides of a given point $t_0\in[0,T]$ must belong to $\wt\HH_{t_0}$. 
Now if $\wt q(t)$ had a corner-type singularity at $t_0$, these limit subspaces would be different and thus they would span together the whole space $\D_{q(t_0)}\oplus 0\cdot\pa_{q_0}$ (cf. Figure \ref{fig:singular}). In particular, $f_{u(t_0)}(q(t_0))+0\cdot\pa_{q_0}\in \D_{q(t_0)}\oplus 0\cdot\pa_{q_0}$ would belong to $\wt\HH_{t_0}$. Yet, since $f_{u(t_0)}(q(t_0))+\pa_{q_0}\in\wt \HH_{t_0}$, this would mean that also the difference of the latter vectors, $0+\pa_{q_0}$, lies in $\wt\HH_{t_0}$, which is impossible since $\wt q(t)$ is normal. 

In a similar way one deals with a cusp-type singularity. At a cusp we would have limit vectors $\pm f_{u(t_0)}(q(t_0))+\pa_{q_0}$ in $\wt\HH_{t_0}$ (see Figure \ref{fig:singular}). Now $0+2\pa_{q_0}$, the sum of these two vectors, would belong to $\wt\HH_{t_0}$ which contradicts the normality of the extremal.   
Roughly speaking, the existence of singularities of corner-type or cusp-type implies $\pa_{q_0}\in\HH_{t_0}$, i.e., either a trajectory is not an extremal or it is abnormal.

\paragraph{Examples.}

\begin{example}(Geodesic equation revisited)
\label{ex:geod1}
Theorem \ref{thm:normal} provides an alternative way to derive the geodesic equation in the Riemannian case (i.e., when $\D=\T Q$). Let $(\wt q(t),\wh u(t))$ be a trajectory of the SR control system (we shall assume that $f_{\wh u(t)}$ is normalized). Since $\D=\T Q$, by the assertion of Theorem \ref{thm:abnormal}, in the Riemannian case there are no abnormal extremals.

Since $\D^\perp_{q(t)}=\{f_{\wh u(t)}(q(t))\}^\perp$ is of co-rank one, the only distribution of higher rank along $q(t)$  containing $\D^\perp_{q(t)}$ is $\T_{q(t)} Q$ which contains also $f_{\wh u(t)}(q(t))$. Now, by Theorem \ref{thm:normal},  $(\wt q(t),\wh u(t))$ is a normal extremal if and only if $\flow\bullet(\D^\perp)_{q(t)}=\D^\perp_{q(t)}$, i.e., if
$$\T \flow{t\tau}(\D^\perp_{q(\tau)})=\D^\perp_{q(t)}\ ,$$
for every $t,\tau\in[0,T]$. By the results of Theorem \ref{thm:flow_bracket} this is equivalent to
$$[f_{\wh u(t)},\D^\perp]_{q(t)}=\D^\perp_{q(t)}\ ,$$
i.e., $g([f_{\wh u(t)},Y],f_{\wh u(t)})_{q(t)}=0$ whenever $g(Y,f_{\wh u(t)})_{q(t)}=0$. Now for such a $Y$, after introducing a metric-compatible connection as in Example \ref{ex:geod}, we have 
\begin{align*}
0=&g([f_{\wh u(t)},Y],f_{\wh u(t)})=g(\nabla_{f_{\wh u(t)}}Y-\nabla_Yf_{\wh u(t)}-T_\nabla(f_{\wh u(t)}, Y),f_{\wh u(t)})=\\
&g(\nabla_{f_{\wh u(t)}}Y,f_{\wh u(t)})-g(\nabla_Yf_{\wh u(t)},f_{\wh u(t)})-g(T_\nabla(f_{\wh u(t)}, Y),f_{\wh u(t)})=\\
&f_{\wh u(t)} g(Y,f_{\wh u(t)})-g(Y,\nabla_{f_{\wh u(t)}}f_{\wh u(t)})-\frac 12 Yg(f_{\wh u(t)},f_{\wh u(t)})-g(T_\nabla(f_{\wh u(t)}, Y),f_{\wh u(t)})\ .
\end{align*}
Using the fact that $g(Y,f_{\wh u(t)})\equiv 0$ and that $g(f_{\wh u(t)}, f_{\wh u(t)})\equiv 1$ we get
$$g(Y,\nabla_{f_{\wh u(t)}}f_{\wh u(t)})+g(T_\nabla(f_{\wh u(t)}, Y),f_{\wh u(t)})=0$$
in agreement with the results of Example \ref{ex:geod}. 
\end{example}

\begin{example}[Heisenberg system]
\label{ex:heisenberg}
Consider a SR system on $\R^3\ni(x,y,z)$ constituted by a 2-distribution 
$$\D_{(x,y,z)}=\<Y:=\pa_x-y\pa_z,Z:=\pa_y+x\pa_z>$$
and a SR metric such that the fields $Y$ and $Z$ form an orthonormal basis. Such a system is usually called the \emph{Heisenberg system}. It is easy to check that the system in question is strongly bracket generating (cf. Example \ref{ex:sbg}) and as such does not admit any abnormal SR extremal. Our goal will  thus be to determine the normal SR extremals using the results of Theorem \ref{thm:normal}. 

Take now any normalized $\D$-valued vector field $X=X_t:=\phi(t) Y+\psi(t) Z$, where $\phi^2+\psi^2=1$. We have $\D^\perp=\<X'={X_t}':=\psi(t) Y-\phi(t) Z>$ and, by the results of Theorem \ref{thm:normal}, the integral curve $q(t)$ of $X$ is a SR normal extremal if and only if $\flow{\bullet}(\D^\perp)_{q(t)}$ does not contain $X$ at any point $q(t)$. Clearly  distribution $\flow{\bullet}(\D^\perp)_{q(t)}$, being $\ad_X$-invariant, contains the fields $X'$, $[X,X']$, $[X,[X,X']]$, etc. Skipping some simple calculations one can show that
$$[X,X']=-2\pa_z+AY+BZ\ ,$$
where 
\begin{equation}
\label{eqn:AB}
\begin{split}
A&=\phi Y(\psi)-\psi Y(\phi)+\frac 12 Z(\phi^2+\psi^2)=\phi Y(\psi)-\psi Y(\phi) \\
B&= \phi Z(\psi)-\psi Z(\phi)-\frac 12 Y(\phi^2+\psi^2)=\phi Z(\psi)-\psi Z(\phi)\ . 
\end{split}
\end{equation}
Let us now present vector field $[X,X']$ as (note that $\{X,X'\}$ is a basis of sections of $\D$)
$$[X,X']=-2\pa_z+\alpha X+\beta X'\ .$$
Then 
$$[X,[X,X']]=X(\alpha)X+\beta[X,X']+X(\beta) X'\ .$$
Now clearly $\<X',[X,X'],[X,[X,X']]>$ would contain $X$ if and only if $X(\alpha)\neq 0$. Thus a necessary condition for an integral curve $q(t)$ of $X$ to be a normal SR extremal is that $\alpha=const$ along $q(t)$. Note that if $X(\alpha)=const$, then the integral curves of $X$ will indeed be normal SR extremals, as then $[X,[X,X']]=\beta[X,X']+X(\beta) X'$ and, consequently, $\flow\bullet(\D^\perp)_{q(t)}$ will be equal to the 2-dimensional distribution $\<X',[X,X']>$ which does not contain $X$ (cf. Corollary \ref{cor:normal}).

By comparing the coefficients of $[X,X']$ expressed in terms of the bases $\{Y,Z\}$ and $\{X,X'\}$ we get
$$A Y+B Z=\alpha X+\beta X'=\alpha(\phi Y+\psi Z)+\beta (\psi Y-\phi Z)=(\alpha \phi+\beta \psi)Y+(\alpha \psi-\beta \phi)Z\ .$$
Thus, by \eqref{eqn:AB},
\begin{align*}
\phi Y(\psi)-\psi Y(\phi)-\alpha \phi&=\beta \psi \\
\phi Z(\psi)-\psi Z(\phi)-\alpha \psi&=-\beta \phi\ .
\end{align*} 
Consequently, 
$$\phi^2 Y(\psi)-\phi\psi Y(\phi)-\alpha \phi^2=\beta \phi\psi=-\phi\psi Z(\psi)+\psi^2 Z(\phi)+\alpha \psi^2,$$
which, after substituting $\phi Y+\psi Z$ by $X$, leads to
$$X(\psi/\phi)=\frac {\phi X(\psi)-\psi X(\phi)}{\phi^2}=\alpha(1+(\psi/\phi)^2)\ ,$$
i.e., the quotient $\psi/\phi$ satisfies the equation $X(x)=\alpha(1+x^2)$, where $\alpha$ is a constant. For $\alpha=0$ we get $x=const$ (i.e., $\phi$ and $\psi$ are constant along $q(t)$), and for $\alpha\neq 0$ we get $x=\arctan(\alpha t+\gamma)$ (i.e., $\phi=\cos(\alpha t+\gamma)$ and $\psi=\sin(\alpha t+\gamma)$). This corresponds to the two well-known families of normal SR extremals of the Heisenberg system (see Sec. 2 of \cite{Liu_Sussmann_1995}), whose projections to the $(x,y)$-plane are straight lines and circles, respectively.
\end{example}

\appendix
\section{Technical results}
\label{sec:appendix}

Below we present technical results and their proofs used in the course of our considerations in Section \ref{sec:technical}.  

\paragraph{Measurable maps.}
We shall start by recalling some basic definitions and results from function theory. 

A map $f:\R\supset V\ra \R^n$ defined on an open subset $V\subset \R$ is called \emph{measurable} if the inverse image of every open set in $\R^n$ is Lebesgue-measurable in $V$. We call $f$ \emph{bounded} if the closure of the image $f(V)$ is a compact set, and \emph{locally bounded} if the closure of the image of every compact set is compact. A point $t\in V$ is called a \emph{regular point} of $f:V\ra \R^n$, if for every open neighborhood $O\in f(t)$, we have
$$\lim_{\operatorname{diam}(V')\to 0}\frac{\mu(f^{-1}(O)\cap V')}{\mu( V')}=1\ .$$
Here the limit is taken over open neighborhoods $V'\ni t$ and $\mu(\cdot)$ denotes the Lebesgue measure on $V$. By Lebesgue theorem, the set of regular points of a bounded and measurable map $f:V\ra \R^n$ is of full measure in $V$. 

A map $x:\R\supset[t_0,t_1]\ra \R^n$ is called \emph{absolutely continuous} (AC, in short) if 
it can be presented in a form of an integral 
$$x(t)=x(t_0)+\int_{t_0}^t v(s)\dd s\ ,$$
for some integrable map $v(\cdot)$. Clearly, an AC map is differentiable at all  regular points $t$ of $v$ (and thus, by Lebesgue theorem, a.e.) and the derivative of $x(t)$ at such a point is simply $v(t)$. We will be particularly interested in AC maps $x(t)$ such that the derivative $v(t)$ is locally bounded. In such a case we shall speak about \emph{AC maps with bounded derivative} (ACB, in short).

\paragraph{Measurable ODE's.}
While speaking about ODE's in the measurable setting we will need to take care of some technical properties of certain functions. In order to simplify the discussion let us introduce the following
\begin{definition}
\label{def:nice_map} 
A map $F:\R^n\times\R\ra\R^m$ will be called \emph{\nice} if the assignment $(x,t)\mapsto F(x,t)$ is 
\begin{align}
  \label{cond:A}
	\text{locally bounded, differentiable with respect to $x$, and measurable with respect to $t$}
\intertext{and if the derivative $(x,t)\mapsto\frac{\pa F}{\pa x}(x,t)$ is}
	\label{cond:B}
	\text{locally bounded, continuous with respect to $x$, and measurable with respect to $t$.}
\end{align}
The notion of a \nice\ map can be naturally extended to the setting of smooth manifolds, namely we shall call a map $F:M\times\R\ra N$ \emph{\nice} if it is \nice\ in a (and thus in any) local smooth coordinate chart on $M$ and $N$. Indeed, it is easy to see that this property does not depend on the particular choice of a chart (cf. the notion of a Caratheodory section in \cite{Jafarpour_Lewis_2014}). 
\end{definition}

Consider now a map $G:\R^n\times\R\ra\R^n$ and the associated non-autonomous ODE in $\R^n$
\begin{equation}
\label{eqn:ODE}
\dot x(t)=G(x(t),t)\ .
\end{equation}
By a (\emph{Caratheodory}) \emph{solution} of \eqref{eqn:ODE} on $[t_0,t_1]$ with the initial condition $x_0$ at $t_0$ we shall understand an AC map $[t_0,t_1]\ni t\mapsto x(t)\in \R^n$ which satisfies \eqref{eqn:ODE} a.e. (recall that an AC map is differentiable a.e.), such that $x(t_0)=x_0$. Note that speaking about Caratheodory solutions makes sense also if the map $G$ is defined only a.e..  

The following fact is a straightforward generalization, to the measurable context, of the standard result about the existence and uniqueness of the solutions of ODE's.

\begin{theorem}\label{thm:solutions_ODE} Assume that the map $(x,t)\mapsto G(x,t)$ is \nice.
Then, for each choice of $(t_0,x_0)\in\R\times\R^n$ there exists, , in a neighborhood of $t_0$, a unique (Caratheodory) ACB solution $t\mapsto x(t;t_0,x_0)$ of equation \eqref{eqn:ODE} satisfying $x(t_0;t_0,x_0)=x_0$.

Moreover, $x(t;t_0,x_0)$ is differentiable with respect to $x_0$ and the derivative $\frac{\pa x}{\pa x_0}(t;t_0,x_0)$ is continuous with respect to $x_0$ and ACB with respect to $t$. In fact, the derivative $\frac{\pa x}{\pa x_0}(t;t_0,x_0)$ is the unique (Caratheodory) solution of the following linear time-dependent ODE, called the variational equation,
\begin{equation}
\label{eqn:tangent_ODE}
\dot V(t,x_0)=\frac{\pa G}{\pa x}(x(t;t_0,x_0),t)V(t,x_0)
\end{equation}
for a curve of linear maps $V(t,x_0):\T_{x_0} \R^n\ra \T_{x(t;t_0,x_0)}\R^n$ 
with the initial condition $V(t_0,x_0)=\id_{\T_{x_0} \R^n}$.
\end{theorem}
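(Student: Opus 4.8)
The plan is to reduce everything to the classical Carath\'{e}odory existence-uniqueness theory for ODE's, which is available in the literature precisely for right-hand sides of the regularity we are assuming, and then to bootstrap the statements about the derivative in $x_0$ from the same theory applied to an enlarged system. First I would recall the classical Carath\'{e}odory theorem: if $(x,t)\mapsto G(x,t)$ is locally bounded, measurable in $t$ for each fixed $x$, and satisfies a local Lipschitz condition in $x$ uniform in $t$ (with the Lipschitz ``constant'' a locally bounded function of $t$), then through each $(t_0,x_0)$ there passes a unique absolutely continuous solution of \eqref{eqn:ODE}, defined on some interval around $t_0$. Our hypothesis \eqref{cond:B} --- that $\frac{\pa G}{\pa x}$ is locally bounded --- gives exactly such a local Lipschitz bound via the mean value theorem applied along segments in the $x$-variable (working in a coordinate chart, where segments make sense). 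So existence and uniqueness of a Carath\'{e}odory solution $t\mapsto x(t;t_0,x_0)$ follow immediately. That the solution is ACB, rather than merely AC, is then a one-line observation: $\dot x(t)=G(x(t),t)$, and since $t\mapsto x(t)$ is continuous and $G$ is locally bounded, the composition $t\mapsto G(x(t),t)$ is locally bounded; hence the derivative of $x$ is locally bounded, which is the definition of ACB.

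Next I would treat differentiability with respect to the initial condition. Here the standard trick is to consider the variational equation \eqref{eqn:tangent_ODE} as a genuine ODE in its own right: on the product space $\R^n\times \mathrm{Mat}_{n\times n}(\R)$ consider the coupled system
\begin{equation*}
\dot x = G(x,t),\qquad \dot V = \tfrac{\pa G}{\pa x}(x,t)\,V,\qquad x(t_0)=x_0,\quad V(t_0)=\id.
\end{equation*}
The right-hand side of this enlarged system is again \nice: it is locally bounded (using local boundedness of $G$ and of $\frac{\pa G}{\pa x}$), measurable in $t$, and differentiable --- indeed affine --- in $V$, while its $x$-dependence is differentiable because $\frac{\pa G}{\pa x}$ is... well, here one must be slightly careful, since we only assumed $\frac{\pa G}{\pa x}$ to be \emph{continuous} in $x$, not differentiable. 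This is the point that needs the classical difference-quotient argument rather than a naive appeal to the enlarged system. So the cleaner route is: solve the variational equation \eqref{eqn:tangent_ODE} (which is \emph{linear} in $V$ with locally bounded measurable coefficients, hence has a unique global-in-the-relevant-interval ACB solution $V(t,x_0)$ by the linear theory), and then show directly that the difference quotient $\bigl(x(t;t_0,x_0+h)-x(t;t_0,x_0)\bigr)/h$ converges to $V(t,x_0)\,e$ as $h\to0$ in each coordinate direction $e$. This is done by writing a Gronwall-type estimate for the difference $w_h(t):=x(t;t_0,x_0+he)-x(t;t_0,x_0)-hV(t,x_0)e$, using the integral form of the equations, the fundamental theorem of calculus to express $G(x+he+w_h,t)-G(x,t)$ via $\int_0^1 \frac{\pa G}{\pa x}(x+s(he+w_h),t)\,ds$, and the continuity (hence uniform continuity on compacta) of $\frac{\pa G}{\pa x}$ in $x$ to control the error term. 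Gronwall's inequality (in the integral, measurable-coefficient form) then yields $\|w_h(t)\| = o(h)$ uniformly on the interval, which is exactly differentiability of $x(\cdot\,;t_0,\cdot)$ with derivative $V$. Continuity of $\frac{\pa x}{\pa x_0}(t;t_0,x_0)=V(t,x_0)$ in $x_0$ follows by applying the same Gronwall argument to the difference of two variational equations with nearby base trajectories, again using continuity of $\frac{\pa G}{\pa x}$ in $x$ and continuous dependence of the base solution on $x_0$; that $t\mapsto V(t,x_0)$ is ACB is immediate from the linear equation \eqref{eqn:tangent_ODE} together with local boundedness of its coefficient.

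The main obstacle, as indicated above, is the derivative-in-$x_0$ part, and specifically the fact that we only have $C^1$-in-$x$ (not $C^2$) regularity of $G$, so the slick ``differentiate the flow by solving an enlarged ODE'' shortcut is not literally available and one must run the classical difference-quotient plus Gronwall estimate by hand. Everything else --- local existence, uniqueness, the ACB upgrade, and the identification of the derivative with the solution of the variational equation --- is essentially bookkeeping on top of the Carath\'{e}odory theorem and the elementary theory of linear ODE's with measurable coefficients. Since the paper explicitly defers these proofs to the appendix and describes them as ``natural generalizations of the results well-known in the smooth case,'' I would present the existence/uniqueness/ACB part crisply and devote the bulk of the write-up to the Gronwall estimate for $w_h$, stating the integral form of Gronwall's lemma for measurable kernels as the one nontrivial ingredient.
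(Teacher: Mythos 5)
Your plan is correct and follows essentially the same route as the paper, which does not prove this theorem itself but defers to \cite{Bressan_Piccoli_2004} (Theorem 3.3.2): Carath\'{e}odory existence and uniqueness from the local Lipschitz bound furnished by the locally bounded derivative $\frac{\pa G}{\pa x}$, the ACB upgrade from local boundedness of $G$, and the identification of $\frac{\pa x}{\pa x_0}$ with the solution of the linear variational equation \eqref{eqn:tangent_ODE} via a difference-quotient plus Gronwall argument. One small repair to your sketch: since $\frac{\pa G}{\pa x}$ is only measurable in $t$, you cannot appeal to uniform continuity in $x$ uniformly in $t$; instead bound the error term for each fixed $t$ (using continuity in $x$ and the uniform convergence of the perturbed trajectories) and pass to the limit inside the Gronwall integral by dominated convergence.
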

The proof is given in \cite{Bressan_Piccoli_2004} (Theorem 3.3.2). Also Sec. 3 of \cite{Grabowski_Jozwikowski_2011} may be useful. Note that equation \eqref{eqn:tangent_ODE} can be obtained by differentiating  the equation $\dot x(t;t_0,x_0)=G(x(t;t_0,x_0),t)$ with respect to $x_0$ and  substituting $V(t,x_0)$ for $\frac{\pa x}{\pa x_0}(t;t_0,x_0)$.

\paragraph{Proof of Theorem \ref{thm:flow_bracket}.}\
In this paragraph we will provide a rigorous proof of Theorem \ref{thm:flow_bracket}. We shall begin with  the following lemma which characterizes the tangent map $\T \A{t\tau}$ of the TD flow of a TDVF $X_t$ in terms of the Lie bracket $[X_t,\cdot]$. Informally speaking, transporting a given vector $Z_0$ via the map $\T \A{tt_0}$ along an integral curve of $X_t$ turns out to be the same as solving the equation $[X_t,\cdot]=0$.

\begin{lemma}\label{lem:bracket_single}
Let $X_t$ be a \nice\ TDVF on a manifold $M$,  $x(t)=x(t;t_0,x_0)$ (with $t\in[t_0,t_1]$) its integral curve, and $\A{tt_0}$ its TD flow. Let $Z_0\in \T_{x_0}M$ be a tangent vector at $x_0$ and denote by $Z(x(t))$ a vector field along $x(t)$ obtained from $Z_0$ by the action of the TD flow $\A{tt_0}$, i.e., $Z(x(t)):=\T \A{tt_0}(Z_0)$. Then the assignment $t\mapsto Z(x(t))$ is ACB and, moreover,
\begin{equation}
\label{eqn:bracket_zero}
[X_t,Z]_{x(t)}=0\quad\text{for a.e. $t\in[t_0,t_1]$.}
\end{equation}

Conversely, if $Z$ is a vector field along $x(t)$ such that the assignment $t\mapsto Z(x(t))$ is ACB and  that equation \eqref{eqn:bracket_zero} holds, then $Z(x(t))=\T \A{tt_0}\left(Z_0\right)$, where $Z_0=Z(x(t_0))$.
\end{lemma}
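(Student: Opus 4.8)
The plan is to work in a fixed local coordinate chart around the relevant piece of the curve $x(t)$, so that all objects become maps into $\R^n$. Write $x(t)=x(t;t_0,x_0)$ and let $V(t):=\frac{\pa x}{\pa x_0}(t;t_0,x_0)$ be the solution of the variational equation \eqref{eqn:tangent_ODE}, which by Theorem \ref{thm:solutions_ODE} exists, is invertible (being the fundamental solution of a linear ODE), and is ACB in $t$. By definition of the TD flow we have $\A{tt_0}(y)=x(t;t_0,y)$, hence $\T\A{tt_0}$ at $x_0$ is exactly $V(t)$, and so $Z(x(t))=\T\A{tt_0}(Z_0)=V(t)Z_0$. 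Since $t\mapsto V(t)$ is ACB and $Z_0$ is a constant vector, $t\mapsto Z(x(t))$ is ACB; this disposes of the regularity claim immediately.

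Next I would verify \eqref{eqn:bracket_zero}. Using formula \eqref{eqn:lie_bracket} for the bracket along $x(t)$ — valid at every regular point of $t\mapsto Z(x(t))$, which here is a.e. $t$ since the map is ACB — the second term $\frac{\pa}{\pa s}\big|_{s=0}X_t(z(s,t))$ is just $\frac{\pa X_t}{\pa x}\big|_{x(t)}\cdot Z(x(t))$ (the usual derivative of $X_t$ in the direction $Z(x(t))$, time frozen), while the first term $\frac{\pa}{\pa\tau}\big|_{\tau=t}Z(x_\tau(\tau))$, for $x_\tau(\tau)=x(\tau)$, equals $\frac{\dd}{\dd t}\big(Z(x(t))\big)=\frac{\dd}{\dd t}\big(V(t)Z_0\big)=\dot V(t)Z_0$, defined for a.e. $t$. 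By the variational equation \eqref{eqn:tangent_ODE}, $\dot V(t)=\frac{\pa G}{\pa x}(x(t),t)V(t)$ with $G$ the coordinate expression of $X_t$, so $\dot V(t)Z_0=\frac{\pa X_t}{\pa x}\big|_{x(t)}\cdot\big(V(t)Z_0\big)=\frac{\pa X_t}{\pa x}\big|_{x(t)}\cdot Z(x(t))$. Subtracting the two terms of \eqref{eqn:lie_bracket} gives $[X_t,Z]_{x(t)}=0$ a.e., as claimed. One should remark that although formula \eqref{eqn:lie_bracket} is coordinate-independent, the intermediate quantities $\frac{\pa X_t}{\pa x}$ are not; it is precisely the matching of the ``freezing-time'' derivative of $X_t$ with the transport derivative coming from \eqref{eqn:tangent_ODE} that makes the non-tensorial parts cancel.

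For the converse, suppose $W(x(t))$ is any ACB vector field along $x(t)$ with $[X_t,W]_{x(t)}=0$ for a.e.\ $t$ and $W(x(t_0))=Z_0$. Writing $W(x(t))$ in coordinates as a curve $t\mapsto w(t)\in\R^n$, the same computation as above shows that the condition $[X_t,W]_{x(t)}=0$ is equivalent, a.e., to $\dot w(t)=\frac{\pa G}{\pa x}(x(t),t)\,w(t)$, i.e.\ $w(\cdot)$ is an ACB (hence Caratheodory) solution of the linear variational ODE \eqref{eqn:tangent_ODE} with initial value $w(t_0)=Z_0$. By the uniqueness part of Theorem \ref{thm:solutions_ODE} applied to this linear equation, $w(t)=V(t)Z_0$, that is $W(x(t))=\T\A{tt_0}(Z_0)$. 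This completes the argument; finally one checks that the chart-dependent reasoning patches together along $[t_0,t_1]$ by covering the compact curve with finitely many charts and using the cocycle property \eqref{eqn:t_flow}.

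\textbf{Main obstacle.} The only genuinely delicate point is the regularity bookkeeping in the measurable setting: one must be sure that formula \eqref{eqn:lie_bracket} really computes $\frac{\dd}{\dd t}\big(V(t)Z_0\big)$ at a.e.\ $t$ (i.e.\ that the ``regular points of $t\mapsto Z(x(t))$'' where \eqref{eqn:lie_bracket} is asserted to hold coincide, up to a null set, with the points where $V$ is differentiable and \eqref{eqn:tangent_ODE} holds), and that differentiating the identity $\dot x(t;t_0,x_0)=G(x(t;t_0,x_0),t)$ with respect to $x_0$ to obtain \eqref{eqn:tangent_ODE} is legitimate — but this last fact is exactly the content of Theorem \ref{thm:solutions_ODE}, so it may simply be invoked. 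Everything else is routine once the problem is pushed into coordinates.
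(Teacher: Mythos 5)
Your proposal is correct and follows essentially the same route as the paper: identify $\T\A{tt_0}$ along $x(t)$ with the fundamental solution $V(t)=\frac{\pa x}{\pa x_0}(t;t_0,x_0)$ of the variational equation \eqref{eqn:tangent_ODE}, use the chain rule in local coordinates to show the two terms of \eqref{eqn:lie_bracket} coincide a.e., and for the converse observe that the vanishing of the bracket is exactly the linear ODE \eqref{eqn:tangent_ODE} for the coordinate expression of $Z$, so uniqueness of Caratheodory solutions (Theorem \ref{thm:solutions_ODE}) forces $Z(x(t))=\T\A{tt_0}(Z_0)$. The only cosmetic difference is that the paper's converse compares $Z$ with $\wtilde Z:=\T\A{tt_0}(Z(x(t_0)))$ and applies uniqueness to their difference, whereas you invoke uniqueness directly for the linear equation; these are the same argument.
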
 

\begin{proof}
Consider first the vector field $Z(x(t)):=\T \A{tt_0}(Z_0)$ along $x(t)$. The fact that $t\mapsto Z(x(t))$ is ACB follows directly from the second part of the assertion of Theorem \ref{thm:solutions_ODE}. 

Let $s\mapsto z_0(s)$ be a curve in $M$ representing $Z_0$, i.e., $z_0(0)=x_0$ and $\frac{\pa}{\pa s}\big|_{s=0}z_0(s)=Z_0$. It is clear that for each $t\in\R$ the vector $Z(x(t))=\T \A{tt_0}(Z_0)$ is represented by the curve $s\mapsto x(t;t_0,z_0(s))=\A{tt_0}(z_0(s))$.
Now from \eqref{eqn:lie_bracket} we have
\begin{align*}
[X_t,Z]_{x(t)}\overset{a.e.}=&\frac{\pa}{\pa t}Z(x(t))-\frac{\pa}{\pa s}\Big|_{s=0}X_t(x(t;t_0,z_0(s)))=\\
&\frac{\pa}{\pa t}\frac{\pa}{\pa s}\Big|_{s=0}x(t;t_0,z_0(s))-\frac{\pa}{\pa s}\Big|_{s=0}X_t(x(t;t_0,z_0(s)))=\\
&\frac{\pa}{\pa t}\left(\frac{\pa x}{\pa x_0}x(t;t_0,x_0)\frac{\pa}{\pa s}\Big|_{s=0}z_0(s)\right)-\frac{\pa}{\pa s}\Big|_{s=0}X_t(x(t;t_0,z_0(s)))=\\
&\frac{\pa}{\pa t}\left(\frac{\pa x}{\pa x_0}x(t;t_0,x_0)Z_0\right)-\frac{\pa}{\pa s}\Big|_{s=0}X_t(x(t;t_0,z_0(s)))=\\
&\frac{\pa}{\pa t}\left(\frac{\pa x}{\pa x_0}x(t;t_0,x_0)\right)Z_0-\frac{\pa}{\pa s}\Big|_{s=0}X_t(x(t;t_0,z_0(s)))\ .
\end{align*}
Passing to local coordinates in which $X_t(x)$ writes as $G(x,t)$ and using the fact that $\frac{\pa x}{\pa x_0}(t;t_0, x_0)$ satisfies \eqref{eqn:tangent_ODE}, we easily get
\begin{align*}
[X_t,Z]_{x(t)}\overset{a.e.}=&\frac{\pa G}{\pa x}(x(t;t_0,x_0),t)\frac{\pa x}{\pa x_0}(t;t_0,x_0)Z_0-\frac{\pa}{\pa s}\Big|_{s=0}G(x(t;t_0,z_0(s)),t)=\\
&\frac{\pa G}{\pa x}(x(t;t_0,x_0),t)\frac{\pa x}{\pa x_0}(t;t_0,x_0)Z_0-\frac{\pa G}{\pa x}(x(t;t_0,z_0(0)),t)\frac{\pa x}{\pa x_0}(t;t_0,z_0(0))\frac{\pa}{\pa s}\Big|_{0}z_0(s)=0\ ,
\end{align*} 
as $z_0(0)=x_0$ and $\frac{\pa}{\pa s}\big|_0z_0(s)=Z_0$.
\medskip

To prove the opposite implication let now $t\mapsto Z(x(t))$ be an ACB vector field along $x(t)$ which commutes with $X_t$. Let us choose a family of curves $s\mapsto z(t,s)$ representing vectors $ Z(x(t))$ for each $t$, that is $z(t,0)=x(t)$ and $\frac{\pa}{\pa s}\big|_{s=0}z(t,s)= Z(x(t))$. Since $[X_t, Z]_{x(t)}=0$, we have by \eqref{eqn:lie_bracket} 
$$\frac{\pa}{\pa t} Z(x(t))\overset{a.e.}=\frac{\pa}{\pa s}\Big|_{s=0}X_t(z(t,s))\ .$$
After introducing local coordinates as above we have
\begin{align*}
\frac{\pa}{\pa t}Z(x(t))\overset{a.e.}=\frac{\pa }{\pa s}\Big|_{s=0}G(z(t,s),t)=\frac{\pa G}{\pa x}(z(t,0),t)\frac{\pa}{\pa s}\Big|_{s=0}z(t,s)=\frac{\pa G}{\pa x}(x(t),t)Z(x(t))\ .
\end{align*}
As we see $t\mapsto Z(x(t))$ satisfies the linear ODE
\begin{equation}
\label{eqn:linear_ODE}
\frac{\pa}{\pa t}W(t)\overset{a.e.}=\frac{\pa G}{\pa x}(x(t),t)W(t)\ .
\end{equation}

Since, by the first part of this proof, for the vector field $\wtilde Z(x(t)):=\T
\A{tt_0}\left[Z(x(t_0))\right]$ we also have $[X_t,\wtilde Z]_{x(t)}=0$ a.e. along $x(t)$, we conclude that $t\mapsto \wtilde Z(x(t))$ is also subject to a linear ODE of the form \eqref{eqn:linear_ODE}. Thus the difference $Z(x(t))-\wtilde Z(x(t))$ is a Caratheodory solution of the linear ODE \eqref{eqn:linear_ODE} with the initial value $Z(x(t_0)-\wtilde Z(x(t_0))=0$. Using the uniqueness of the solution (cf. Theorem \ref{thm:solutions_ODE}) we conclude that $Z(x(t))-\wtilde Z(x(t))\equiv 0$. 
\end{proof}

Now we are finally ready to prove Theorem \ref{thm:flow_bracket}. 

\begin{proof}[Proof of Theorem \ref{thm:flow_bracket}.]
Assume first that condition \eqref{cond:a} of Theorem \ref{thm:flow_bracket} holds. Choose a basis $\{Z_{10},\hdots,Z_{k0}\}$ of $\B_{x(t_0)}$, where $k$ is the rank of $\B$, and for $i=1\hdots,k$ denote $Z_{i}(x(t)):=\T \A{tt_0}(Z_{i0})$. By the results of Lemma \ref{lem:bracket_single}, the fields $Z_i$ are ACB along $x(t)$ and satisfy $[X_t,Z_i]_{x(t)}\equiv 0$ a.e. along $x(t)$.  Thanks to condition \eqref{cond:a} and the fact that $\A{tt_0}$ is a local diffeomorphism,  $Z_i$'s span $\B$. 

Let now $Z\in\Sec_{ACB}(\B)$ be any ACB section of $\B$. We want to present it as a linear combination of fields $Z_i$ with ACB coefficients, i.e., $Z=\sum_i \phi^i Z_i$, where $\phi^i$ are ACB functions along $x(t)$. To prove that such a presentation is possible first take vectors $W_{j0}\in \T _{x(t_0)} M$ with $j=0,\hdots,s$ such that $Z_{i0}$'s together with $W_{j0}$'s form a basis of $\T_{x(t_0)}M$. Clearly, the fields $W_j(x(t)):=\T \A{tt_0}(W_{j0})$  together with $Z_i$'s  span $\T M$ along $x(t)$. Since by Lemma \ref{lem:bracket_single} these fields are ACB, given any local basis of smooth vector fields $\mathcal{U}:=\{U_1,\hdots, U_{k+s}\}$ on $M$, the transition matrix $T_{\mathcal{U}\to \mathcal{ZW}}$ from the basis $\mathcal{U}$ to the basis $\mathcal{ZW}:=\{Z_1,\hdots,Z_k,W_1,\hdots,W_s\}$ is a matrix of ACB functions. As $T_{\mathcal{U}\to \mathcal{ZW}}$ is non-degenerate, the inverse matrix $T_{\mathcal{ZW}\to \mathcal{U}}$ is also a matrix of ACB functions (here we use the fact that if $\phi$ is an ACB function separated from 0, then so is $\frac 1\phi$). Thus any vector field with ACB coefficients in basis $\mathcal{U}$ (in particular $Z$) will have ACB coefficient in basis $\mathcal{ZW}$. As the field $Z$ is $\B$-valued, all $W_j$'s coefficients of $Z$ vanish, i.e., $Z=\sum_i \phi^i Z_i$, where $\phi^i$ are ACB functions along $x(t)$ as intended. Now by the Leibniz rule\footnote{We leave the proof of the fact that the Lie bracket \eqref{eqn:lie_bracket} satisfies the Leibniz rule as an exercise.} we get
$$[X_t,Z]_{x(t)}=[X_t,\sum_i\phi^i Z_i]_{x(t)}=\sum_i\left(\phi^i[X_t,Z_i]_{x(t)}+X_t(\phi^i) Z_i\Big |_{x(t)}\right)\overset{a.e.}=\sum_iX_t(\phi^i)Z_i\Big |_{x(t)}\in \B_{x(t)}\ .$$
Thus \eqref{cond:a} implies \eqref{cond:b}. 
\medskip

Assume now that condition \eqref{cond:b} of Theorem \ref{thm:flow_bracket} holds. Let $\{\wtilde Z_1,\hdots,\wtilde Z_k\}$ be any basis of ACB sections of $\B$. The idea is to modify this basis to another basis of ACB sections $\{Z_1,\hdots,Z_k\}$, such that for every $i=1,\hdots, k$ we have $[Y_t,Z_i]_{x(t)}\equiv 0$ a.e. along $x(t)$. In the light of Lemma \ref{lem:bracket_single} this would imply that the new basis is respected by the flow $\A{tt_0}$ and, consequently, that \eqref{cond:a} holds.  

Due to \eqref{cond:b},  $[Y_t,\wtilde Z_i]_{x(t)}$ is a $\B$-valued locally bounded measurable vector field for each $i=1,\hdots,k$ and thus there exists a $k\times k$ matrix of locally bounded measurable functions\footnote{The existence of measurable functions $\phi_i^{\ j}$ can be justified in a similar manner to the existence of ACB functions $\phi_i$ above.} $\phi_i^{\ j}$ along $x(t)$ such that 
$$[X_t,\wtilde Z_i]_{x(t)}=\sum_j \phi_i^{\ j} \wtilde Z_j\Big |_{x(t)}\ .$$ 
Now the simple idea is to look for the desired basis $\{Z_1,\hdots,Z_k\}$ in the form $Z_i=\sum_j \psi_i^{\ j} \wtilde Z_j$, where $\psi_i^{\ j}$ is an invertible $k\times k$ matrix of function ACB along $x(t)$. Clearly for such $Z_i$'s we have
\begin{align*}
[X_t, Z_i]_{x(t)}=&[X_t,\sum_j \psi_i^{\ j} \wtilde Z_j]_{x(t)}=\sum_jX_t(\psi_i^{\ j})\wtilde Z_j\Big |_{x(t)}+\sum_j\psi_i^{\ j}[X_t,\wtilde Z_j]_{x(t)}\overset{a.e.}=
\\&\sum_jX_t(\psi_i^{\ j})\wtilde Z_j\Big |_{x(t)}+\sum_j\sum_s\psi_i^{\ j} \phi_j^{\ s}\wtilde Z_s\Big |_{x(t)}\ .
\end{align*} 
As we see $[X_t, Z_i]_{x(t)}=0$ a.e. along $x(t)$ if and only 
$$X_t(\psi_i^{\ j})\overset{a.e.}=-\sum_{s}\psi_i^{\ s}\phi_s^{\ j}\ ,$$
i.e., the matrix $\psi_i^{\ j}$ should be a solution of a linear ODE with locally bounded measurable coefficients. Due to the results of Theorem \ref{thm:solutions_ODE}, for a given initial condition, say, $\psi_i^{\ j}(x_0)=\delta_i^{\ j}$, this equation has a unique local ACB solution. As a consequence, we prove the local existence of the desired basis $\{Z_1,\hdots, Z_k\}$, which implies \eqref{cond:a}. 
\end{proof}

\paragraph{Proof of Lemma \ref{lem:along_x}.}

We shall end our considerations by providing the following

\begin{proof}[Proof of Lemma \ref{lem:along_x}.]
The idea of the proof is very simple. Consider another  $\D$-valued \nice\ TDVF $X'_t$ such that $X_t=X'_t$ along $x(t)$. We shall show that $\B$ is $X_t$-invariant if and only if it is $X'_t$-invariant along $x(t)$. The justification of this statement is just a matter of a calculation. Observe that since $X_t$ and $X'_t$ are both \nice\ and $\D$-valued, then so is their difference $X_t-X'_t$. Given any local basis of smooth vector fields $\{W_1,\hdots,W_s\}$  of $\D$ we may locally represent $X_t-X'_t$ as
$$X_t(x)-X'_t(x)=\sum_i\phi^i(t,x)W_i(x)\ ,$$
where $\phi^i(t,x)$ are \nice\ functions (in the sense of Definition \ref{def:nice_map}). Since $X'_t=X_t$ along $x(t)$ and $W_i$'s form a basis of $\D$, we have 
\begin{equation}
\label{eqn:f_0}
\phi^i(t,x(t))=0\ .
\end{equation}
Now for any section $Z\in\Sec_{ACB}(\B)$, using the same notation as in formula \eqref{eqn:lie_bracket}, we have
\begin{align*}
&[X'_t,Z]_{x(t)}-[X_t,Z]_{x(t)}\overset{\eqref{eqn:lie_bracket}}=\frac{\pa}{\pa s}\Big|_{s=0}X_t(z(t,s))-\frac{\pa}{\pa s}\Big|_{s=0}X'_t(z(t,s))=\\
&\frac{\pa}{\pa s}\Big|_{s=0}\left[X_t(z(t,s))-X'_t(z(t,s))\right]=\frac{\pa}{\pa s}\Big|_{s=0}\left[\sum_i \phi^i(t,z(t,s)) W_i(z(t,s))\right]=\\
&\sum_i\left[\frac{\pa}{\pa s}\Big|_{s=0}\phi^i(t,z(t,s)) W_i(x(t)))+ \phi^i(t,x(t)) \frac{\pa}{\pa s}\Big|_{s=0}W_i(z(t,s))\right]\overset{\eqref{eqn:f_0}}=\\
&\sum_i\frac{\pa}{\pa s}\Big|_{s=0}\phi^i(t,z(t,s)) W_i(x(t))\ .
\end{align*}
Clearly the above expression is $\D_{x(t)}\subset \B_{x(t)}$-valued. Thus along $x(t)$
$$[X_t,Z]_{x(t)}=[X'_t,Z]_{x(t)}\mod \B_{x(t)}\ ,$$
and hence, since $Z$ was an arbitrary ACB section of $\B$,
$$[X_t,\B]_{x(t)}=[X'_t,\B]_{x(t)}\mod\B_{x(t)}\ .$$
This ends the proof. 
\end{proof}

\paragraph{A technical result about \sexy\ distributions.}The following result will be needed in the course of Subsection \ref{ssec:normal} to prove that normal SR extremals are of class $C^1$. It states that the Gram-Schmidt orthogonalization algorithm works well on charming distributions.  
\begin{lemma}\label{lem:gram_schmidt}
Let $\B\subset\T M$ be a charming distribution along a curve $x:[t_0,t_1]\ra M$. Assume that $\B$ is equipped with a positively-defined scalar product $g:\B\times_{x(\cdot)}\B\ra \R$ such that that the assignment $t\mapsto g(x(t))$ is an ACB map. Let $\{X_1,\hdots,X_s\}$ be a family of $s$ linearly-independent ACB sections of $\B$ along $x(t)$. Then the Gram-Schmidt orthogonalization algorithm applied to $\{X_1,\hdots,X_s\}$ produces a $g$-orthonormal family of ACB sections of $\B$ along $x(t)$.  
\end{lemma}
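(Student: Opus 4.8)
The plan is to run the Gram--Schmidt procedure explicitly and to check that every operation it performs keeps us inside the class of ACB sections of $\B$ along $x(t)$. Concretely, I would compute, in the order $\wtilde Y_1,Y_1,\wtilde Y_2,Y_2,\hdots,\wtilde Y_s,Y_s$,
\begin{equation*}
\wtilde Y_k:=X_k-\sum_{j=1}^{k-1}g(X_k,Y_j)\,Y_j\ ,\qquad Y_k:=\frac{\wtilde Y_k}{\sqrt{g(\wtilde Y_k,\wtilde Y_k)}}\ ,\qquad k=1,\hdots,s\ ,
\end{equation*}
where the sum is empty for $k=1$, so $\wtilde Y_1=X_1$. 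Two facts are immediate and purely pointwise: for each fixed $t\in[t_0,t_1]$ the family $\{Y_1(x(t)),\hdots,Y_s(x(t))\}$ is $g$-orthonormal (the standard linear-algebra identity), and each $\wtilde Y_k$, $Y_k$ is valued in $\B_{x(t)}$, since the formulas only take linear combinations of the $\B$-valued sections $X_1,\hdots,X_k$ with scalar coefficients and $\B_{x(t)}$ is a linear subspace of $\T_{x(t)}M$. So the whole content of the lemma is to show, by induction on $k$, that $t\mapsto Y_k(x(t))$ is ACB.

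Before the induction I would record a short toolbox of closure properties of the class of ACB maps on the \emph{compact} interval $[t_0,t_1]$, all routine consequences of the Leibniz and chain rules for AC maps with bounded derivative: (i) a finite product of ACB functions is ACB; (ii) an ACB function times an ACB section is an ACB section, and the pairing $t\mapsto g(Z(x(t)),W(x(t)))$ of two ACB sections $Z,W$ of $\B$ is an ACB function --- for (ii) one works in a local smooth frame of $\T M$ (equivalently, in the ACB frame $\{X_1,\hdots,X_s\}$ of the constant-rank subbundle $\<X_1,\hdots,X_s>\subset\B$ along $x(t)$, which contains every section occurring in the construction), using that being ACB is a local condition in $t$, and then invokes (i) together with the hypothesis that $t\mapsto g(x(t))$ is ACB; (iii) if $\phi$ is ACB on $[t_0,t_1]$ with $\phi\ge c>0$, then $1/\phi$ and $\sqrt\phi$ are ACB, since $(1/\phi)'=-\phi'/\phi^2$ and $(\sqrt\phi)'=\phi'/(2\sqrt\phi)$ are bounded (the statement about $1/\phi$ is exactly the one already used in the proof of Theorem \ref{thm:flow_bracket}).

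With this toolbox the induction is short. Assume $Y_1,\hdots,Y_{k-1}$ are ACB sections of $\B$. By (ii) each $g(X_k,Y_j)$ is an ACB function, so by (i)--(ii) the section $\wtilde Y_k=X_k-\sum_{j<k}g(X_k,Y_j)Y_j$ is ACB. The one step that genuinely needs an argument --- and the one I expect to be the crux --- is that the normalizing factor stays away from zero: since $\wtilde Y_k$ is a combination of $X_1,\hdots,X_k$ in which $X_k$ occurs with coefficient $1$ and the $X_i$ are pointwise linearly independent, $\wtilde Y_k(x(t))\ne 0$ for every $t$; as $g$ is positive-definite, $t\mapsto g(\wtilde Y_k(x(t)),\wtilde Y_k(x(t)))$ is continuous (being ACB, hence continuous) and strictly positive on the compact interval $[t_0,t_1]$, hence bounded below by some $c_k>0$. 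By (iii), $\bigl(g(\wtilde Y_k,\wtilde Y_k)\bigr)^{-1/2}$ is ACB, and therefore $Y_k$ is an ACB section of $\B$ by (ii). This closes the induction and, together with the orthonormality noted at the outset, proves Lemma \ref{lem:gram_schmidt}. Everything except the compactness/linear-independence argument for the uniform lower bound on $\sqrt{g(\wtilde Y_k,\wtilde Y_k)}$ is bookkeeping about the stability of the ACB class under sums, products, reciprocals and square roots.
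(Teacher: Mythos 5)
Your proof is correct and follows essentially the same route as the paper's: run Gram--Schmidt explicitly and verify that the ACB class is stable under the operations involved (sums, products, pairing with the ACB metric $g$, and division by a function separated from zero, the latter guaranteed by positive-definiteness, pointwise linear independence and compactness of $[t_0,t_1]$). Your write-up is somewhat more detailed — the explicit induction, the square-root closure property, and the uniform lower bound argument are spelled out rather than merely invoked — but there is no difference in the underlying idea.
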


\begin{proof}
Recall that the Gram-Schmidt algorithm maps a set $\{X_1,\hdots,X_s\}$ into a $g$-orthonormal set $\{X_1'',\hdots,X_s''\}$ constructed in the following way
\begin{align*}
&X_1\mapsto X_1':=X_1,\\
&X_2\mapsto X_2':=X_2-\rzut_{X_1}X_2,\\
&\hdots\\
&X_s\mapsto X_s':=X_s-\sum_{i=1}^{s-1}\rzut_{X_i}X_s,\\
&X_i'\mapsto X_i'':=\frac{1}{g(X_i',X_i')}X_i'\quad \text{for $i=1,\hdots,s$}; 
\end{align*}
where $\rzut_{X}Y:=\frac{g(X,Y)}{g(X,X)}X$ denotes the $g$-orthogonal projection of $Y$ on the space spanned by $X$.

Now it is enough to use the following elementary facts concerning ACB functions:
\begin{itemize} 
	\item A sum and a difference of two ACB functions is an ACB function. 
	\item A product of two ACB functions is an ACB function. 
	\item If an ACB function $\phi:[t_0,t_1]\ra \R$ is separated from zero, then $\frac 1\phi$ is an ACB function on $[t_0,t_1]$. 
\end{itemize}
Clearly, in every step of the Gram-Schmidt algorithm we apply one or more of these elementary operations to ACB sections (we use the fact that $g$ is ACB and that for every ACB non-zero vector $X$ the values of $g(X,X)$ are separated from zero on $[t_0,t_1]$.) Thus as a result we also obtain ACB sections $X_i''$'s. 
\end{proof}

\section*{Acknowledgements}
This research was supported by the National Science Center under the grant DEC-2011/02/A/ST1/00208 ``Solvability, chaos and control in quantum systems''.

\bibliographystyle{alpha}      
\bibliography{bibl}   

\begin{thebibliography}{PMBG62}

\bibitem[ABB12]{Agrachev_Barilari_Boscain_2012}
A.~A. Agrachev, D.~Barilari, and U.~Boscain.
\newblock Introduction to {R}iemannian and sub-{R}iemannian geometry.
\newblock {\em Preprint SISSA}, 9:1--331, 2012.

\bibitem[AOP97]{Alcheikh_Orro_Pelletier_1997}
M.~Alcheikh, P.~Orro, and F.~Pelletier.
\newblock Characterizations of {H}amiltonian geodesics in sub-{R}iemannian
  geometry.
\newblock {\em J. Dynam. Control Systems}, 3:391--418, 1997.

\bibitem[Arn89]{Arnold_1989}
V.~I. Arnold.
\newblock {\em Mathematical {M}ethods of {C}lassical {M}echanics}.
\newblock Grad. Texts in Math. Springer, 1989.

\bibitem[AS04]{Agrachev_Sachkov_2004}
A.~A. Agrachev and Y.~L. Sachkov.
\newblock {\em Control {T}heory from the {G}eometric {V}iewpoint}, volume~87 of
  {\em Encyclopaedia Math. Sci.}
\newblock Springer, 2004.

\bibitem[AZ06]{Agrachev_Zelenko_2006}
A.~A. Agrachev and I.~Zelenko.
\newblock Nurowski's conformal structures for (2,5)-distributions via dynamics
  of abnormal extremals.
\newblock In {\em Proceedings of {RIMS} {S}ymposium on "{D}evelopments of
  {C}artan {G}eometry and {R}elated {M}athematical {P}roblems"}, pages
  204--218. "RIMS {K}okyuroku" series 1502, 2006.

\bibitem[BGG15]{Bruce_Grabowska_Grabowski_2015}
A.~J. Bruce, K.~Grabowska, and J.~Grabowski.
\newblock Remarks on contact and {J}acobi geometry.
\newblock {\em arXiv:1507.05405 [math-ph]}, 2015.

\bibitem[BP04]{Bressan_Piccoli_2004}
A.~Bressan and B.~Piccoli.
\newblock {\em Introduction to the {M}athematical {T}heory of {C}ontrol},
  volume~2 of {\em AIMS {S}eries on {A}pplied {M}athematics}.
\newblock Springer, 2004.

\bibitem[DZ12]{Doubrov_Zelenko_preprint}
B.~Doubrov and I.~Zelenko.
\newblock Prolongation of quasi-principal frame bundles and geometry of flag
  structures on manifolds.
\newblock {\em arXiv:1210.7334 [math.DG]}, 2012.

\bibitem[GJ11]{Grabowski_Jozwikowski_2011}
J.~Grabowski and M.~J\'{o}\'{z}wikowski.
\newblock {P}ontryagin {M}aximum {P}rinciple - a generalization.
\newblock {\em SIAM J. Control Optim.}, 49:1306--1357, 2011.

\bibitem[Gra13]{Grabowski_2013}
J.~Grabowski.
\newblock Graded contact manifolds and contact {C}ourant algebroids.
\newblock {\em J. Geom. Phys.}, 68:27--58, 2013.

\bibitem[JKP09]{Jakubczyk_Krynski_Pelletier_2009}
B.~Jakubczyk, W.~Kry\'{n}ski, and F.~Pelletier.
\newblock Characteristic vector fields of generic distributions of corank 2.
\newblock {\em Ann. Inst. H. Poincar\'{e} Anal. Non Lin\'{e}aire}, 26:23--38,
  2009.

\bibitem[JL14]{Jafarpour_Lewis_2014}
S.~Jafarpour and A.~D. Lewis.
\newblock {\em Time-varying vector fields and their flows}.
\newblock Springer Briefs in Mathematics. Springer, 2014.

\bibitem[Lew06]{Lewis_2006}
A.~D. Lewis.
\newblock The {M}aximum {P}rinciple of {P}ontryagin in control and in optimal
  control.
\newblock {\em Handouts for the course taught at the Universitat Politecnica de
  Catalunya}, 2006.

\bibitem[Lib12]{Liberzon_2012}
D.~Liberzon.
\newblock {\em Calculus of {V}ariations and {O}ptimal {C}ontrol {T}heory: {A}
  {C}oncise {I}ntroduction}.
\newblock Princeton University Press, 2012.

\bibitem[LM87]{Marle_Liber_sympl_geom_anal_mech_1987}
P.~Libermann and Ch.-M. Marle.
\newblock {\em Symplectic {G}eometry and {A}nalytical {M}echanics}, volume~35
  of {\em Mathematics and Its Applications}.
\newblock Springer, 1987.

\bibitem[LS95]{Liu_Sussmann_1995}
W.~Liu and H.~J. Sussmann.
\newblock {\em Shortest paths for sub-{R}iemannian metrics on rank-two
  distributions}, volume 564 of {\em Mem. Amer. Math. Soc.}
\newblock AMS, 1995.

\bibitem[Ohs15]{Ohsawa_contact_pmp_2015}
T.~Ohsawa.
\newblock Contact {G}eometry of the {P}ontryagin {M}aximum {P}rinciple.
\newblock {\em Automatica J. IFAC}, 55:1--5, 2015.

\bibitem[PMBG62]{Pontr_Inn_math_theor_opt_proc_1962}
L.~S. Pontryagin, E.~F. Mishchenko, V.~G. Boltyanskii, and R.~V. Gamkrelidze.
\newblock {\em The {M}athematical {T}heory of {O}ptimal {P}rocesses}.
\newblock Wiley, 1962.

\bibitem[Sus73]{Sussmann_1973}
H.~J. Sussmann.
\newblock Orbits of families of vector fields and integrability of
  distributions.
\newblock {\em Trans. Amer. Math. Soc.}, 180:171--188, 1973.

\bibitem[Sus98]{Sussmann_coord_free_pmp}
H.~J. Sussmann.
\newblock {\em An introduction to the coordinate-free maximum principle},
  volume 207 of {\em Monogr. Textbooks Pure Appl. Math.}, pages 463--557.
\newblock Dekker, 1998.

\bibitem[Zel06]{Zelenko_2006}
I.~Zelenko.
\newblock Fundamental form and {C}artan's tensor of (2,5)-distributions
  coincide.
\newblock {\em J. of Dynam. Control Systems}, 12:247--276, 2006.

\bibitem[Zhi95]{Zhitomirskii_1995}
M.~Zhitomirskii.
\newblock Rigid and abnormal line subdistributions of 2-distributions.
\newblock {\em J. of Dynam. Control Systems}, 1:253--294, 1995.

\end{thebibliography}

\end{document}